\let\oldmarginpar\marginpar
\renewcommand\marginpar[1]{\oldmarginpar[\raggedleft\footnotesize #1]%
{\raggedright\footnotesize #1}}
\newcounter{notes}
\renewcommand{\setminus}{{\smallsetminus}}
\newcommand{\Z}{{\mathbb{Z}}}
\newcommand{\C}{{\mathbb{C}}}
\newcommand{\QQ}{{\mathbb{Q}}}
\newcommand{\Sk}{\mathcal{S}}
\newcommand{\MCG}{\mathrm{Mod}}
\newcommand{\id}{\mathrm{id}}
\theoremstyle{plain}
\newtheorem{theorem}{Theorem}[section]
\newtheorem{corollary}[theorem]{Corollary}
\newtheorem{lemma}[theorem]{Lemma}
\newtheorem{proposition}[theorem]{Proposition}
\newtheorem{conjecture}[theorem]{Conjecture}
\newtheorem*{namedtheorem}{\theoremname}
\newcommand{\theoremname}{testing}
\theoremstyle{definition}
\newtheorem{definition}[theorem]{Definition}
\title[A Basis of the skein module of $\Sigma \times S^1$]{A basis for the
Kauffman skein module of the product of a surface and a circle}
\author{Renaud Detcherry}
\address{Max Planck Institute for Mathematics \\
         Vivatsgasse 7, 53111 Bonn, Germany \newline
         {\tt \url{http://people.mpim-bonn.mpg.de/detcherry}}
         }
\email{detcherry@mpim-bonn.mpg.de}
\author{Maxime Wolff}
\address{Universit\'e Pierre et Marie Curie - Paris 6
\\ Institut de Math\'ematiques de Jussieu
\\ 4 place Jussieu 75005 Paris
\\ {\tt \url{https://webusers.imj-prg.fr/~maxime.wolff/}}}
\email{maxime.wolff@imj-prg.fr}
\begin{document}

\begin{abstract}
  The Kauffman bracket skein module $\mathcal{S}(M)$ of a $3$-manifold~$M$
  is a $\mathbb{Q}(A)$-vector space spanned by links in~$M$ modulo the
  so-called Kauffman relations.
  In this article, for any closed oriented surface~$\Sigma$ we provide an
  explicit spanning family for the skein modules $\mathcal{S}(\Sigma \times S^1).$ 
  Combined with earlier work of Gilmer and Masbaum~\cite{GM18}, we answer
  their question about the dimension of $\mathcal{S}(\Sigma \times S^1)$
  being $2^{2g+1}+2g-1.$
\end{abstract}


\maketitle

\section{Introduction}
\label{sec:intro}
The Kauffman bracket~\cite{Kau} skein module is an invariant of compact
oriented $3$-manifolds. It was first introduced independently by
Przytycki~\cite{Prz} and Turaev~\cite{Tur} as a way to generalize the Jones
polynomial of links in~$S^3.$ It can be thought as a module over any ring~$R$
containing an invertible element $A \in R.$
The skein module $\Sk(M,R)$ with coefficients in~$R$ is the $R$-module:
\[ \Sk(M,R)= \mathrm{Span}_R(L \subset M \ \textrm{framed link})/_{\textrm{isotopy}, K_1,K_2} \]
spanned by isotopy classes of framed links in~$M,$ modulo the two (local)
Kauffman relations $K_1$ and $K_2:$
\begin{center}
\begingroup%
  \makeatletter%
  \providecommand\color[2][]{%
    \errmessage{(Inkscape) Color is used for the text in Inkscape, but the package 'color.sty' is not loaded}%
    \renewcommand\color[2][]{}%
  }%
  \providecommand\transparent[1]{%
    \errmessage{(Inkscape) Transparency is used (non-zero) for the text in Inkscape, but the package 'transparent.sty' is not loaded}%
    \renewcommand\transparent[1]{}%
  }%
  \providecommand\rotatebox[2]{#2}%
  \ifx\svgwidth\undefined%
    \setlength{\unitlength}{183.60766602bp}%
    \ifx\svgscale\undefined%
      \relax%
    \else%
      \setlength{\unitlength}{\unitlength * \real{\svgscale}}%
    \fi%
  \else%
    \setlength{\unitlength}{\svgwidth}%
  \fi%
  \global\let\svgwidth\undefined%
  \global\let\svgscale\undefined%
  \makeatother%
  \begin{picture}(1,0.3520064)%
    \put(0,0){\includegraphics[width=\unitlength]{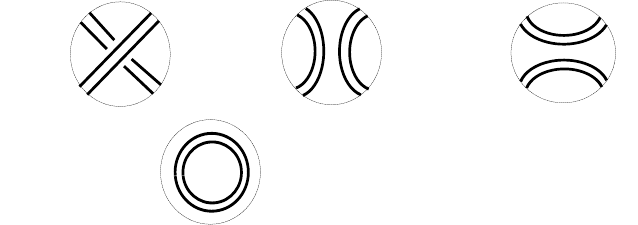}}%
    \put(0.27969297,0.2468334){\color[rgb]{0,0,0}\makebox(0,0)[lb]{\smash{$=A$}}}%
    \put(0.60852019,0.2456939){\color[rgb]{0,0,0}\makebox(0,0)[lb]{\smash{$+A^{-1}$}}}%
    \put(0.09586279,0.06504689){\color[rgb]{0,0,0}\makebox(0,0)[lb]{\smash{$L \ \bigcup$ }}}%
    \put(0.45219313,0.06502602){\color[rgb]{0,0,0}\makebox(0,0)[lb]{\smash{$=(-A^2-A^{-2}) L$}}}%
    \put(-0.00410004,0.23883738){\color[rgb]{0,0,0}\makebox(0,0)[lb]{\smash{K1:}}}%
    \put(-0.0031004,0.06612645){\color[rgb]{0,0,0}\makebox(0,0)[lb]{\smash{K2:}}}%
  \end{picture}%
\endgroup%

\end{center}
where the above relations identify linear combinations of framed links that
are identical except in a small ball in~$M.$
In the significant case where $R=\QQ(A)$ is the field
of rational functions in the variable~$A,$ we will write $\Sk(M)$
for $\Sk(M,\QQ(A))$ for simplicity.

The skein modules of $3$-manifolds have long been mysterious and notoriously
hard to compute, and for a long time, very little
was known about the structure of skein modules of general $3$-manifolds.
They were partially or completely computed for an increasing
number of closed $3$-manifolds ($S^3$ and lens spaces by Hoste and
Przytycki~\cite{HP93}\cite{HP95}, integer Dehn surgeries on the trefoil by
Bullock~\cite{Bul}, some prism manifolds by Mroczkowski~\cite{Mro11}, the
quaternionic manifold by Gilmer and Harris~\cite{GH07}, the $3$-torus by
Carrega~\cite{Car17} and Gilmer~\cite{Gil18}, and some infinite family of
hyperbolic manifolds by the
first author~\cite{Det19}). Finally, Witten conjectured that $\Sk(M)$ has finite
dimension for every closed $3$-manifold
(see~\cite[Section~8]{GM18} for a discussion).
Recently, Gunningham, Jordan and Safronov posted a general proof~\cite{GJS19}
of Witten's finiteness conjecture. Their proof, which relies on factorization
algebras, the theory of $DQ$-modules and some careful quantization of character
varieties/character stacks, is powerful and generalizes
to other types of skein modules than the Kauffman bracket skein module.

However, the proof in \cite{GJS19} is rather non-constructive; in
particular, it is still hard for a given $3$-manifold~$M$ to compute the
dimension $\mathrm{dim}_{\QQ(A)}(\Sk(M))$ or to find a family of framed links
that is a basis of $\Sk(M).$ It would be rather interesting to give some
general interpretation of the dimensions $\mathrm{dim}_{\QQ(A)}(\Sk(M)).$
Another aspect of skein modules that remains unclear is the integral structure
of skein modules of closed $3$-manifolds. Indeed, a more precise statement,
concerning the ``integral'' version of the skein module $\Sk(M),$ that is,
the skein module $\Sk(M,\Z[A^{\pm 1}])$ with coefficients in $\Z[A^{\pm 1}],$
has been conjectured by Julien March\'e.
For $n\in \Z$ let $\lbrace n \rbrace=A^n-A^{-n}.$
\begin{conjecture}\label{conj:intFinConj}
  Let $M$ be a closed compact oriented $3$-manifold. There exists an integer
  $d\geqslant 0$ and finitely generated $\Z[A^{\pm 1}]$-modules $N_k$ so that
  \[ \Sk(M,\Z[A^{\pm 1}])=\Z[A^{\pm 1}]^d \oplus \underset{k \geqslant 1}{\bigoplus} N_k \]
  where, furthermore, the module $N_k$ is a $\lbrace k \rbrace$-torsion module.
\end{conjecture}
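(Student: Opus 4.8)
The plan is to separate Conjecture~\ref{conj:intFinConj} into a ``free rank'' statement and a ``torsion'' statement, and to prove the latter by localizing one cyclotomic polynomial at a time. First I would present $\Sk(M,\Z[A^{\pm1}])$ concretely: for a Heegaard splitting $M=H\cup_\Sigma H'$, the skein module $\Sk(H,\Z[A^{\pm1}])$ of the handlebody is free over $\Z[A^{\pm1}]$ (Przytycki), and $\Sk(M,\Z[A^{\pm1}])$ is its quotient by the submodule $R$ generated by differences of skeins that become isotopic after sliding across $H'$. Base change along the flat map $\Z[A^{\pm1}]\to\QQ(A)$ gives $\Sk(M,\Z[A^{\pm1}])\otimes\QQ(A)\cong\Sk(M)$, which is finite dimensional by \cite{GJS19} --- and, for the surfaces treated in this paper, by the explicit spanning family constructed below --- so the rank of $\Sk(M,\Z[A^{\pm1}])$ modulo torsion is the integer $d$ of the statement. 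The torsion submodule $T$ is $R'/R$, where $R'$ is the saturation of $R$, and the content of the conjecture is the internal structure of $T$: that it is supported only at roots of unity, that it is ``first order'' there, and that it is finitely generated near each root of unity.

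The second step is to show $T$ is supported only at roots of unity, i.e.\ that every associated prime of $T$ contains a cyclotomic polynomial $\Phi_n(A)$. The mechanism is a generic-flatness and upper-semicontinuity argument for $a\mapsto\dim_{\C}\Sk(M,\C)$ under specializations $A\mapsto a\in\C^\ast$: once finitely many quantum integers are inverted, the skein algebra of $\Sigma$ is well-behaved away from roots of unity (generically Azumaya over its center, by the unicity theorem of Frohman--Kania-Bartoszy\'nska--L\^e), so the relation submodule $R$ has locally constant corank on the complement of a finite union of cyclotomic hypersurfaces. This yields a decomposition $T=\bigoplus_n T_{(n)}$ with $T_{(n)}$ supported at $n$-th roots of unity, and since $\{k\}=A^{-k}\prod_{d\mid 2k}\Phi_d(A)$ up to a unit, it suffices to show each $T_{(n)}$ is annihilated by $\Phi_n(A)$ --- so that it becomes a module over $\Z[A^{\pm1}]/\Phi_n(A)\cong\Z[\zeta_n]$, on which $\{n\}$ vanishes --- and is finitely generated over that ring.

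The third step --- and the main obstacle --- is precisely to rule out higher-order torsion, i.e.\ to show $T_{(n)}$ is killed by a \emph{first} power of $\Phi_n(A)$. Here I would localize at $\Phi_n(A)$, so that $A$ becomes a primitive $n$-th root of unity, and invoke the Chebyshev--Frobenius homomorphism of Bonahon--Wong (and its skein-theoretic form due to Frohman--Kania-Bartoszy\'nska--L\^e), which exhibits the skein algebra of $\Sigma$ at such a root of unity as a finite module over a central subalgebra isomorphic to the skein algebra at $A=\pm1$, essentially the coordinate ring of the $\mathrm{SL}_2(\C)$-character variety of $\pi_1(\Sigma)$. The aim is to prove that the failure of flatness of $\Sk(M,\Z[A^{\pm1}])$ along $\{\Phi_n=0\}$ is detected by the reduced geometry of the corresponding character variety of $\pi_1(M)$ rather than by a nilpotent thickening, which forces $\Phi_n$ itself to annihilate the primary part; finite generation near $A=\zeta_n$ should likewise follow from finiteness of that character variety. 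This ``no higher-order torsion'' statement is the genuinely hard and, as far as I know, still open piece for general $M$. In the special case $M=\Sigma\times S^1$ one sidesteps the character-variety input: the explicit presentation of $\Sk(\Sigma\times S^1,\Z[A^{\pm1}])$ coming from the spanning family constructed in this paper can be analyzed directly and brought to a block form whose blocks are manifestly finitely generated $\{k\}$-torsion modules, so the conjecture follows for these $M$ by inspection. Everything before the third step is base-change and support bookkeeping; the crux is bounding the multiplicity of each cyclotomic factor in the ``elementary divisors'' of $R\subset\Sk(H,\Z[A^{\pm1}])$, uniformly in $M$ and $n$.
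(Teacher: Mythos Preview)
The statement you are ``proving'' is Conjecture~\ref{conj:intFinConj}: the paper presents it as an \emph{open} conjecture due to March\'e and offers no proof of it, in general or for $\Sigma\times S^1$. So there is no ``paper's own proof'' to compare against, and your proposal should be read as an attempted proof of an open problem.

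On the general case, you yourself flag the gap: the third step, ruling out higher-order cyclotomic torsion (i.e.\ showing each $T_{(n)}$ is annihilated by a first power of $\Phi_n$), is precisely the content of the conjecture once the support bookkeeping is done, and you say it is ``as far as I know, still open.'' The sketch via Bonahon--Wong and generic Azumaya-ness is plausible heuristics for \emph{why} one might expect first-order torsion, but it is not an argument: nothing in the unicity theorem or the Chebyshev--Frobenius map bounds the nilpotency order of the cyclotomic thickening for a \emph{closed} $3$-manifold, and the finite-generation claim for the $N_k$ is likewise unaddressed beyond a hope that finiteness of the character variety suffices. So for general $M$ the proposal is an outline of a strategy whose decisive step remains unproved.

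On the special case $M=\Sigma\times S^1$, your claim that the conjecture ``follows by inspection'' from the explicit spanning family is wrong, and the paper says so explicitly: ``Let us stress that Theorem~\ref{thm:basis_skein} does \emph{not} imply Conjecture~\ref{conj:intFinConj} for $M=\Sigma\times S^1$; one would need to prove in particular that the $R_i(A)$ have bounded denominators.'' The spanning family $\mathcal{B}$ is a $\QQ(A)$-basis, and the reduction algorithm repeatedly divides by quantum integers $\{k\}$; this shows only that every torsion element is $\{k\}$-torsion for \emph{some} $k$ (which the paper does claim as a by-product). It does not give a uniform bound on the $k$'s appearing, nor finite generation of each $N_k$, nor the claimed direct-sum splitting. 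Bringing the presentation ``to a block form whose blocks are manifestly finitely generated $\{k\}$-torsion modules'' is exactly what has not been done.
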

The two authors heard of this conjecture from Julien March\'e by private communication.
To the best of our knowledge, this integral version of the finiteness
conjecture is not implied by Gunningham, Jordan and Safronov's results.
The integral structure of skein modules is also of interest because of its
interaction with quantum invariants, as we will explain later in this introduction.

In the whole article, we focus on a single family of
manifolds: products $\Sigma \times S^1$ of a compact closed oriented surface
$\Sigma$ of genus $g\geqslant 2$ and a circle. (Although,
we believe that our techniques can be employed for other circle bundles
over a closed surface).
We note that skein elements in $\Sigma \times S^1$ admit some particularly
nice diagrammatic representations as so-called \textit{arrowed diagrams}.
Indeed, we can always put a link $L \subset \Sigma \times S^1$ in general
position with respect to the projection $\Sigma \times S^1 \rightarrow \Sigma,$
and besides the over/under-crossing information, we only need to remember
where the link~$L$ intersects $\Sigma \times \lbrace 0 \rbrace.$
We remember this information by putting an arrow on the diagram at such
intersections; the direction of the arrow gives the direction in which~$L$ is rising.
This notation was first introduced by Dabkowski and
Mroczkowski in~\cite{DM09}, where it was used to compute the skein module
of $\Sigma_{0,3}\times S^1.$

With this notation, let us introduce a family $\mathcal{B}$
of elements of $\Sk(\Sigma\times S^1)$ consisting of the following diagrams.
For each non-zero homology class of $H_1(\Sigma,\Z/2)$, we choose an oriented,
simple closed curve representing it,
and consider the diagram consisting of this curve with no arrows, as well as the
diagram consisting of this representant with~$1$ arrow. Finally, we consider
a trivial curve on~$\Sigma$, with~$0$ to $2g$ arrows. Then, our main
result is the following.
\begin{theorem}\label{thm:basis_skein}
  The family $\mathcal{B}$ is a basis of the skein module $\Sk(\Sigma\times S^1)$.
  In particular, $\Sk (\Sigma \times S^1)$ has dimension $2^{2g+1}+2g-1.$
\end{theorem}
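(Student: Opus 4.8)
The plan is to prove the two halves of the statement separately. For the inequality $\dim_{\QQ(A)}\Sk(\Sigma\times S^1)\geqslant 2^{2g+1}+2g-1$ I would invoke Gilmer and Masbaum~\cite{GM18}, who exhibit enough linearly independent classes in $\Sk(\Sigma\times S^1)$ to force this bound. Since one checks directly that $|\mathcal{B}|=2(2^{2g}-1)+(2g+1)=2^{2g+1}+2g-1$, the whole theorem follows once one shows that $\mathcal{B}$ spans $\Sk(\Sigma\times S^1)$: this forces $\mathcal{B}$ to be linearly independent as well, hence a basis, and pins down the dimension. So the real content is the spanning statement, and that is what I would organise the argument around.

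I would represent an arbitrary framed link by an arrowed diagram $D$ on $\Sigma$ in the sense of Dabkowski--Mroczkowski, and run an induction on a complexity ordered lexicographically by: the number of crossings of $D$, then the geometric complexity of the underlying multicurve on $\Sigma$ (say its geometric intersection number with a fixed filling curve system, together with its number of components), and finally the total number of arrows. The first reduction is the easy one: resolving a crossing by $K_1$ rewrites $D$ as an $A^{\pm 1}$-combination of two diagrams with one fewer crossing, and $K_2$ removes a crossingless, arrow-free contractible loop at the cost of $-A^2-A^{-2}$. Thus it suffices to treat crossingless arrowed diagrams, i.e. disjoint unions of simple closed curves on $\Sigma$ each carrying finitely many arrows; and on each component a pair of consecutive, oppositely oriented arrows bounds a bigon that pushes off $\Sigma\times\{0\}$, so one may assume all arrows on a component point the same way, the number of them being the winding number of that component around $S^1$.

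The heart of the matter, and what I expect to be the main obstacle, is a package of ``global'' relations in $\Sk(\Sigma\times S^1)$ collapsing these crossingless diagrams onto $\mathcal{B}$. It helps to use the natural $\Sk(\Sigma\times I)$-module structure on $\Sk(\Sigma\times S^1)$ obtained by inserting surface skeins into a collar $\Sigma\times I\subset\Sigma\times S^1$; the arrow-free elements of $\mathcal{B}$ are then the images of the empty skein and of one curve representative per nonzero class of $H_1(\Sigma;\Z/2)$. The relations I would need are, roughly: (a) any multicurve, any curve that is trivial in $H_1(\Sigma;\Z/2)$ (including essential separating curves), and any system of parallel copies reduces to single-curve diagrams of lower complexity together with arrow-only diagrams --- so that what survives depends only on the $\Z/2$-homology class --- with the Chebyshev/Jones--Wenzl calculus on annular neighbourhoods and the closed-manifold relations as the main tools; (b) a nontrivial curve carrying two or more arrows equals, modulo lower complexity, a nontrivial curve with at most one arrow, the direction of a single arrow being immaterial modulo lower complexity; and (c) a trivial curve with $k\geqslant 2g+1$ arrows, equivalently the $k$-th power of the $S^1$-fibre, is a combination of its lower powers. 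Relation (c) is the precise place where the genus of $\Sigma$ --- and, behind it, the closedness of $\Sigma\times S^1$, in the spirit of Witten finiteness --- enters; I would try to derive it by pushing one copy of the fibre across the $2g$ one-handles of $\Sigma$ and combining a Frohman--Gelca-type product-to-sum identity on the vertical tori $\gamma\times S^1$ with the relations of type (a).

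Granting (a)--(c), every crossingless arrowed diagram is rewritten inside the $\QQ(A)$-span of the empty link, of one curve per nonzero class of $H_1(\Sigma;\Z/2)$ with $0$ or $1$ arrows, and of a trivial curve with at most $2g$ arrows --- that is, inside the span of $\mathcal{B}$. Together with the lower bound from~\cite{GM18} this yields that $\mathcal{B}$ is a basis and $\dim_{\QQ(A)}\Sk(\Sigma\times S^1)=2^{2g+1}+2g-1$. The genuine difficulty is twofold: proving the relations (a)--(c), especially (c), rigorously from the Kauffman relations and $S^1$-isotopies; and packaging all the moves so that the chosen complexity strictly decreases at each step, i.e. checking that merging curves, deleting trivial circles, or reducing arrows never reintroduces crossings or a more complicated multicurve and thereby breaks the induction.
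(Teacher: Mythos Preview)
Your overall architecture coincides with the paper's: linear independence is imported from Gilmer--Masbaum, and the work is to show that $\mathcal{B}$ spans. Your trichotomy (a)--(c) also lines up with the paper's Sections~3,~5,~4.1 and~4.2 respectively. But you yourself flag (a)--(c) as the ``genuine difficulty'', and the mechanisms you propose for them are not the ones that actually work, so as written this is an outline rather than a proof.

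The main gap is that the paper's relations do not come from Chebyshev/Jones--Wenzl calculus or a Frohman--Gelca product-to-sum identity. The single engine driving everything is the commutation relation $\gamma\cdot\delta=\delta\cdot\gamma$ peculiar to $\Sigma\times S^1$: for well-chosen pairs $(\gamma,\delta)$ one resolves both sides and compares, obtaining the paper's sphere relations, torus relations, and two-holed torus relations. These are what reduce an arbitrary multicurve to a single (non-separating or trivial) arrowed curve, and the induction is organised not via intersection with a filling system but via a bespoke complexity $(\deg,\#\text{components})$ with $\deg=(\#\text{non-separating})+2(\#\text{separating})$; the factor of~$2$ is what makes the sphere relations degree-decreasing. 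Your (a) also hides a second, logically separate step: showing that two non-separating simple closed curves in the same $H_1(\Sigma,\Z/2)$ class are \emph{equal} (not merely congruent modulo lower terms) in $\Sk(\Sigma\times S^1)$. The paper does this by deriving $\tau_\delta^2(\gamma)=\gamma$ when $i(\gamma,\delta)=1$ and $\tau_\delta(\gamma)=\gamma$ when $i(\gamma,\delta)=2$ from commutation, and then running an explicit case analysis over the Lickorish generators. Likewise your (b) holds on the nose, $\gamma_n=\gamma_{n-2}$, again from commutation; and your (c) is obtained not by pushing a fibre across handles but by encoding the two-holed torus relations as operators $\Delta_\pm=A^{\pm1}s-A^{\mp1}$ on the span of the $S_n$, proving $\Delta_-^{2g}S_n=\Delta_+^{2g}S_{-n}$ via a ``sausage'' pants decomposition, and combining this with an arrow-shift identity to solve for $S_{n}$ in terms of $S_0,\ldots,S_{2g}$.
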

Our proof is completely elementary; it uses only skein
relations and direct computation. It is independent of~\cite{GJS19}.

The skein modules of the manifolds $\Sigma \times S^1$ were previously studied
by Gilmer and Masbaum~\cite{GM18}.
They introduced a general tool to bound below the dimension of skein modules.
Given a closed compact oriented $3$-manifold~$M,$ Gilmer and Masbaum defined a
linear map
\[ \mathrm{ev} \colon \Sk(M) \rightarrow \C^{\mathcal{U}}_{ae}. \]
In the above, $\mathcal{U}$
denotes the set of roots of unity of even order, and
$\C^{\mathcal{U}}_{ae}$ is the $\QQ(A)$-vector space of functions
of the variable
$A \in \mathcal{U}$ that are defined almost everywhere. For example, any
function $F(A) \in \QQ(A)$ is an element of $\C^{\mathcal{U}}_{ae}$ as it is
defined as long as~$A$ is not a root of the denominator of~$F.$ Moreover, for any
$k \in \Z,$ there is a map $p^k\in \C^{\mathcal{U}}_{ae}$ which maps any
primitive root of unity of order~$2p$ to~$p^k.$
The map $\mathrm{ev}$ is defined using the Reshetikhin-Turaev invariants of
links in $M:$
given a $2p$-th root of unity $A$
where $p\geqslant 3,$ and a link $K \subset M,$
there is a well-defined
topological invariant $RT(M,K,A)\in \C.$ The invariant can be computed by
choosing any surgery presentation~$L$ for~$M,$ and computing a colored
Kauffman bracket of $L \cup K,$ where~$L$ has been colored by some special
polynomial $\omega_p \in \Z[A^{\pm 1}][z]$ called the Kirby color.
We will not give a complete definition of $RT(M,K,A),$ and just
refer to~\cite{GM18} for details. The important point to us is that, with
this definition, the invariant $RT(M,K,A)$ naturally satisfies the Kauffman
relations with respect to $K.$ Thus it is possible to extend it
$\QQ(A)$-linearly from the set of framed links to any element of $\Sk(M),$
as those elements are $\QQ(A)$ linear combinations of links in $M.$
The only caveat is that we may have to exclude some values of
$A \in \mathcal{U}$ if they are in denominators of the coefficients of the
linear combination.
Using the map $\mathrm{ev}$ in the special case where $M=\Sigma \times S^1,$
Gilmer and Masbaum
showed that the dimension of its image is at least $2^{2g+1}+2g-1,$
and thus so is the dimension of $\Sk(\Sigma \times S^1).$
They conjectured that this is actually the dimension of $\Sk(\Sigma \times S^1).$
Theorem~\ref{thm:basis_skein} answers positively to their conjecture;
our contribution is to prove that the family $\mathcal{B}$, of cardinal
$2^{2g+1}+2g-1$, generates $\Sk(\Sigma\times S^1).$
Our proof is constructive: given a link~$L$ in $\Sigma \times S^1,$ it is
possible to algorithmically decompose it as a linear combination of
elements of~$\mathcal{B}$.

For simplicity we stated Theorem~\ref{thm:basis_skein} with the skein
module with coefficients in $\QQ(A)$, but in fact we only need
$\lbrace k\rbrace=A^k-A^{-k}$ to be invertible, for all $k\neq 0$.
In particular, a by-product of our proof is that torsion elements
of the integral skein module $\Sk(\Sigma \times S^1,\Z[A^{\pm 1}])$ are always of
$\lbrace k \rbrace$-torsion for some $k\geqslant 1,$ in conformity with
Conjecture~\ref{conj:intFinConj}.

For any manifold~$M,$ the skein module $\Sk(M)$ has a natural
$H_1(M,\Z/2)$-grading, as the Kauffman relations always involve links in~$M$
that have the same homology class in $H_1(M,\Z/2).$
Thanks to the basis we computed, we can answer some other questions raised
in~\cite{GM18} about the structure of quantum invariants of links
in $\Sigma \times S^1:$
\begin{corollary}\label{cor:imgGMmap}
  For any $z \in \Sk(\Sigma \times S^1),$ the image of~$z$ by Gilmer-Masbaum's
  evaluation map is of the form
  \[ \mathrm{ev}(z)=\underset{i \in I}{\sum}R_i(A)p^{i} \]
  for some rational functions
  $R_i(A) \in \QQ(A),$ and where
  $I=\lbrace g-1,g+1,\ldots 3g-3 \rbrace \cup \lbrace g \rbrace.$
  Moreover, the Gilmer-Masbaum map is injective on each graded subspace
  of $\Sk(\Sigma \times S^1).$
\end{corollary}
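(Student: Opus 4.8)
The plan is to deduce Corollary~\ref{cor:imgGMmap} from Theorem~\ref{thm:basis_skein}. Since $\mathrm{ev}$ is $\QQ(A)$-linear and $\mathcal{B}$ spans $\Sk(\Sigma\times S^1)$, writing $z=\sum_{b\in\mathcal{B}}c_b(A)\,b$ reduces everything to computing $\mathrm{ev}(b)$ for the finitely many $b\in\mathcal{B}$. Moreover $\mathrm{ev}$ is invariant under push-forward by self-homeomorphisms of $\Sigma\times S^1$ (because $RT$ is a topological invariant), and by the change-of-coordinates principle all non-separating simple closed curves of $\Sigma$ are equivalent under $\MCG(\Sigma)$; hence $\mathrm{ev}(b)$ depends only on the combinatorial type of $b$, and there are just a handful of functions to determine: $\mathrm{ev}$ of the empty diagram, of a trivial curve carrying $k$ arrows ($1\le k\le 2g$), of a non-separating curve lying in a level surface $\Sigma\times\{*\}$, and of such a curve carrying one arrow.

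To identify these functions I would, for $A$ a primitive $2p$-th root of unity, compute the Reshetikhin--Turaev numbers via the underlying TQFT: $RT(\Sigma\times S^1,\emptyset,A)=\dim V_p(\Sigma)$ is the Verlinde number (the dimension of the relevant TQFT module); a diagram made of $S^1$-fibres, expanded in the basis of coloured cores, gives a combination of the numbers $\dim V_p(\Sigma;c)$; and a curve $\gamma$ carried by a level surface (with or without an arrow) gives the trace on $V_p(\Sigma)$ of the corresponding curve operator. The key input is that the Verlinde-type sums $\sum_j\{j\}^{-2m}$ that appear are polynomial in $p$; substituting these into the Verlinde formula exhibits each of the above numbers as a finite $\QQ(A)$-combination of the functions $p^i$, and tracking the largest and smallest powers of $p$ that can occur --- using the explicit shape of the genus-$g$ Verlinde formula, with and without a marked point --- shows that only the exponents of $I=\{g-1,g+1,\ldots,3g-3\}\cup\{g\}$ arise, the extra exponent $g$ being produced by the diagrams carrying arrows, which is exactly what distinguishes them from the empty diagram. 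Summing over $b$ then yields $\mathrm{ev}(z)=\sum_{i\in I}R_i(A)p^i$, proving the first assertion. I expect the evaluation of the arrowed diagrams, and the bookkeeping that pins down $I$ precisely, to be the main obstacle.

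For the injectivity statement I would first observe that the elements of $\mathcal{B}$ lying in a fixed $H_1(\Sigma\times S^1;\Z/2)$-graded piece are: a single curve (with or without an arrow) when the $H_1(\Sigma;\Z/2)$-component is non-zero, and the trivial curves with an even (respectively odd) number of arrows when it is zero; hence by Theorem~\ref{thm:basis_skein} the graded pieces have dimensions $1$, $g+1$ and $g$. On the other hand Gilmer and Masbaum's lower bound is obtained piece by piece: on each graded subspace the image of $\mathrm{ev}$ has dimension at least that of the subspace. Since the rank of a linear map cannot exceed the dimension of its domain, these two facts together force $\mathrm{ev}$ to be injective on every graded subspace. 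Equivalently, this can be seen directly from the computations of the previous paragraph: on a one-dimensional piece one only checks that $\mathrm{ev}(b)\ne0$, and on the two pieces over the trivial $H_1(\Sigma;\Z/2)$-class one checks that the corresponding $g+1$, respectively $g$, vectors $\big(R_i(A)\big)_{i\in I}$ are linearly independent over $\QQ(A)$ --- which, after the Chebyshev change of variables from the number of arrows to the colour of the core, reduces to a Vandermonde-type nondegeneracy.
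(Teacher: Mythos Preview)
Your high-level strategy is correct and is exactly the paper's: write $z$ in the basis $\mathcal{B}$ via Theorem~\ref{thm:basis_skein}, use $\QQ(A)$-linearity of $\mathrm{ev}$, and for injectivity match Gilmer--Masbaum's graded lower bound against the now-known dimensions of the graded pieces.

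Where you diverge is in proposing to recompute, from the Verlinde formula and TQFT traces, the evaluations $\mathrm{ev}(b)$ for $b\in\mathcal{B}$ and to pin down the exponent set $I$ yourself. This is unnecessary work: those computations are precisely what Gilmer and Masbaum carried out in~\cite{GM18}, where the index set $I=\{g-1,g+1,\ldots,3g-3\}\cup\{g\}$ is theirs. The paper's entire proof of the corollary is therefore one sentence citing~\cite{GM18}: Gilmer and Masbaum already showed that $\mathrm{ev}$ has the stated form on linear combinations of non-separating simple closed curves with $0$ or $1$ arrows and of trivial curves with $0$ to $2g$ arrows, and that its image on these elements has dimension $2^{2g+1}+2g-1$. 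Once Theorem~\ref{thm:basis_skein} tells you these elements form a basis, both assertions follow immediately. Your ``main obstacle'' disappears once you cite their result rather than rederive it.
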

The corollary results from the fact that Gilmer and Masbaum showed that it
is the case for elements that are linear combinations of non-separating
simple closed curves with $0$ or $1$ arrows, and the trivial curves with
$0$ to $2g$ arrows.
In particular, the rational functions $R_i(A),$ as linear combinations of the coefficients
in the basis $\mathcal{B},$ are algorithmically computable invariants
of links in $\Sigma \times S^1,$ that satisfy the Kauffman relations.

Let us stress that Theorem~\ref{thm:basis_skein}
does not imply Conjecture~\ref{conj:intFinConj} for $M=\Sigma\times S^1$;
one would need to prove in particular that the $R_i(A)$ have bounded
denominators.
This is related to the work of March\'e and Santharoubane~\cite{MS16},
who defined a Jones like polynomial invariant for links~$L$ in
$\Sigma \times S^1,$ considering the highest order of the asymptotics of
the quantum invariants $RT_p(\Sigma\times S^1,L).$
Their invariant, which has value in $\Z[A^{\pm 1}]$ is closely related to
the invariant $R_{3g-3}(A).$
As our method is algorithmic, we computed the coefficients
in the basis $\mathcal{B}$ of a few arrowed diagrams.
These coefficients have interesting integral properties; they seem to be
in $\Z[A^{\pm 1}]$ instead of $\QQ(A)$, which
corroborates Conjecture~\ref{conj:intFinConj}.

We may also hope to find more direct formulas
for these coefficients: this may produce an alternative proof of
the linear independance of $\mathcal{B}$ as it would then suffice to prove
invariance of these coefficients by Reidemeister moves.
We hope to explore further these coefficients, which may be thought of as
Jones-like polynomial invariants for links in $\Sigma\times S^1$, in a later work.
\bigskip

The article is organized as follows.
In Section~\ref{sec:arrowed_diagrams}, we introduce the notion of arrowed
diagrams and the elementary moves they satisfy. In Section~\ref{sec:relations},
we introduce several important relations that we will use throughout the paper.
In the remainder of Section~\ref{sec:reducing_degree}, we define a notion of
degree on the set of arrowed multicurves, and by expressing multicurves as
linear combinations of multicurves of smaller degree, we prove that
$\Sk(\Sigma \times S^1)$ is spanned by arrowed trivial curves and arrowed
non-separating curves. In Section \ref{sec:elimin_arrows}, we show that one
only needs up to $1$ arrow on non-separating curves, and up to~$2g$ arrows
on the trivial curve to span $\Sk(\Sigma \times S^1).$ Finally, in
Section~\ref{sec:equiv_classes}, we introduce an equivalence
relation
on the set of
non-separating curves that is motivated by relations in the skein module.
We compute the equivalence classes of this relation, and deduce that
non-separating curves (with same number of arrows) that represent the same
element in $H_1(\Sigma,\Z/2)$ also represent the same element of
$\Sk(\Sigma \times S^1),$ concluding the proof of Theorem~\ref{thm:basis_skein}.

{\em Acknowledgements.} The first author was supported by the
Max Planck Institute for Mathematics during this research, and thanks the
institute for its hospitality. The second author learned about this problem, and about
TQFT more generally, from Julien March\'e while he was writing his introductory
text~\cite{JulienIntro}.
We are also both grateful to Gregor Masbaum for
his constant interest in this work.

\section{Arrowed diagrams, complexity and trivial components}
\label{sec:arrowed_diagrams}
In this section, we introduce the notion of arrowed multicurves and arrowed
diagrams on~$\Sigma.$ While the definition of elements in the skein module
$\Sk (M)$ of a $3$-manifold~$M$ is very much $3$-dimensional, this notion
which will give us a purely $2$-dimensional way of thinking of elements of
$\Sk (\Sigma \times S^1).$
Arrowed diagrams were first
introduced by Dabkowski and Mroczkowski in~\cite{DM09}, where they were used
to compute the skein module of $\Sigma_{0,3}\times S^1.$

Let us view the $S^1$ factor of $\Sigma \times S^1$ as $[0,1]/_{0 \sim 1},$
and let $L$ be a link in $\Sigma \times S^1.$ By a general position argument,
up to isotopy $L$ can be assumed to be transverse to
$\Sigma \times \lbrace 0 \rbrace,$ to have no vertical tangent and, furthermore,
the image of $L$ by the projection $\Sigma \times S^1 \rightarrow \Sigma$ can
be assumed to only have a finite number of double points with transverse intersection.
The diagram of the projection, together with the choice of upper and lower strand at
each double point/crossing, is almost enough to determine $L$ up to isotopy.
The only missing information is when does $L$ cross the level
$\Sigma \times \lbrace 0 \rbrace.$ Thus we add an arrow on the projection at
each point of the projection coming from an intersection point
$L \cap \left(\Sigma \times \lbrace 0 \rbrace \right).$ Moreover, we choose
the direction of the arrow to be the direction in which $L$ crosses
$\Sigma \times \lbrace 0 \rbrace$ positively.
Conversely, any arrowed diagram gives rise to a link in $\Sigma \times S^1$ in an
obvious way. Because the Kauffman bracket skein module deals with framed links,
we would like to put a canonical framing on each arrowed diagram. We do so by
choosing the parallel of $L$ to be the push-off of $L$ along the positive
direction of $S^1.$ With this convention, any framed link $L$ still has an arrowed
diagram, as we can always correct the framing by adding curls to the components of
the diagram.

Dabkowski and Mroczkowski gave a complete set of moves describing isotopy of
framed links in $\Sigma \times S^1:$
\begin{proposition}\label{prop:Reidemeister}\cite{DM09}
  Two arrowed diagrams of framed links in $\Sigma \times S^1$ correspond to
  isotopic links if and only if they are related by standard Reidemeister
  moves $R_1',$ $R_2,$ $R_3$ and the moves:
  \begin{center}
\begingroup%
  \makeatletter%
  \providecommand\color[2][]{%
    \errmessage{(Inkscape) Color is used for the text in Inkscape, but the package 'color.sty' is not loaded}%
    \renewcommand\color[2][]{}%
  }%
  \providecommand\transparent[1]{%
    \errmessage{(Inkscape) Transparency is used (non-zero) for the text in Inkscape, but the package 'transparent.sty' is not loaded}%
    \renewcommand\transparent[1]{}%
  }%
  \providecommand\rotatebox[2]{#2}%
  \ifx\svgwidth\undefined%
    \setlength{\unitlength}{221.64836426bp}%
    \ifx\svgscale\undefined%
      \relax%
    \else%
      \setlength{\unitlength}{\unitlength * \real{\svgscale}}%
    \fi%
  \else%
    \setlength{\unitlength}{\svgwidth}%
  \fi%
  \global\let\svgwidth\undefined%
  \global\let\svgscale\undefined%
  \makeatother%
  \begin{picture}(1,0.1979595)%
    \put(0,0){\includegraphics[width=\unitlength]{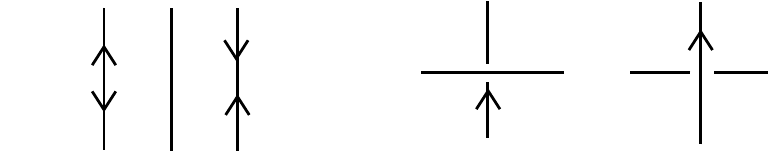}}%
    \put(0.1511898,0.08198808){\color[rgb]{0,0,0}\makebox(0,0)[lb]{\smash{$\sim$}}}%
    \put(0.23048952,0.08198808){\color[rgb]{0,0,0}\makebox(0,0)[lb]{\smash{$\sim$}}}%
    \put(-0.00102922,0.0792536){\color[rgb]{0,0,0}\makebox(0,0)[lb]{\smash{$(R_4)$}}}%
    \put(0.43557502,0.0792536){\color[rgb]{0,0,0}\makebox(0,0)[lb]{\smash{$(R_5)$}}}%
    \put(0.74001303,0.07469615){\color[rgb]{0,0,0}\makebox(0,0)[lb]{\smash{$\sim$}}}%
  \end{picture}%
\endgroup%

  \end{center}
\end{proposition}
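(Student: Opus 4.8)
\emph{Proof sketch.} This is a Reidemeister-type theorem, so the plan is to follow the standard template: first show that every framed link gives rise to an arrowed diagram, then follow a generic isotopy between two links in general position and record the codimension-one events through which it must pass. Writing $S^1=[0,1]/(0\sim 1)$ and letting $\pi\co\Sigma\times S^1\to\Sigma$ be the projection, one argues, as already indicated above, that a small perturbation puts a framed link $L$ in general position: $L$ is transverse to the level surface $\Sigma\times\{0\}$, the restriction $\pi|_L$ is an immersion whose only singularities are finitely many transverse double points, no double point lies over a point of $\Sigma\times\{0\}$, and $L$ has no vertical tangency. Projecting, recording over/under information at the double points, and drawing at each point of $L\cap(\Sigma\times\{0\})$ an arrow pointing in the positive $S^1$-direction then produces an arrowed diagram, the canonical push-off framing being encoded by curls exactly as for blackboard framings in the plane. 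Since an arrowed diagram visibly reconstructs a framed link, the assignment from arrowed diagrams to framed links is onto, and the content of the proposition is the identification of its fibres.

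For the invariance statement, let $L_0,L_1$ be isotopic framed links, both in general position, with arrowed diagrams $D_0,D_1$, and choose an ambient isotopy $(L_t)_{t\in[0,1]}$ connecting them. By a transversality (jet-transversality) argument in the space of maps $\coprod S^1\to\Sigma\times S^1$, one may perturb $(L_t)$ rel endpoints so that $L_t$ is in general position for all but finitely many $t$, and at each exceptional time exactly one of the following local events occurs: (i) two strands of $\pi|_{L_t}$ become tangent, creating or cancelling a pair of crossings; (ii) three strands become concurrent in the projection; (iii) a curl is slid along a strand, noting that tracking the framing forbids a single curl from appearing or disappearing; (iv) a point of $L_t$ crosses $\Sigma\times\{0\}$ transversally and away from the crossings, so that an arc locally gains or loses two consecutive, oppositely directed arrows; (v) a double point of $\pi|_{L_t}$ passes over a point of $\Sigma\times\{0\}$, i.e. a crossing momentarily lies at level $0$, with the effect of sliding an arrow from one side of that crossing to the other. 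Events (i)--(iii) are the moves $R_2$, $R_3$ and $R_1'$, and events (iv) and (v) are precisely $R_4$ and $R_5$, the several diagrams joined by ``$\sim$'' recording the variants (arrow on the over- or under-strand, and the two orientations). Conversely each of $R_1',R_2,R_3,R_4,R_5$ is realised by an explicit local isotopy in $\Sigma\times S^1$, hence preserves the link type; combining the two directions gives the proposition.

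The real work lies in the bookkeeping for events (iv) and (v): one must check that the number and directions of the arrows transform under a level-crossing exactly as in the pictures defining $R_4$ and $R_5$, and in particular that no further phenomenon arises from the interaction between the basepoint surface $\Sigma\times\{0\}$ and the crossings. This is a careful but local analysis, and it is the step I expect to require the most attention. It is convenient to arrange the perturbation so that these events occur one at a time by first making the isotopy generic with respect to $\pi$ alone --- which recovers the usual Reidemeister calculus for diagrams on $\Sigma$ --- and only then generic with respect to the $S^1$-coordinate near $\Sigma\times\{0\}$. Since every move is supported in a disk of $\Sigma$, the global topology of $\Sigma$ is irrelevant to the list, and the framing is handled exactly as in the planar or annular case, independently of the arrow data.
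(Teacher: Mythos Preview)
The paper does not actually prove this proposition: it is quoted as a result of Dabkowski and Mroczkowski~\cite{DM09}, with no argument given beyond the citation. So there is no ``paper's own proof'' to compare your sketch against.

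That said, your outline follows the standard Reidemeister-theorem template (general position, codimension-one events in a generic isotopy, identification of each event with a local move), which is essentially what any proof of such a statement must do and is presumably what \cite{DM09} does. Your classification of events (i)--(v) is the right list, and your remark that the only new phenomena beyond the surface Reidemeister moves come from the interaction with the level $\Sigma\times\{0\}$ is correct. If you want to turn this into a self-contained proof you would need to flesh out the transversality argument and the local pictures for $R_4$ and $R_5$, but for the purposes of this paper a citation suffices.
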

The relations $R_4$ and $R_5$ imply the commutation relation:
if $\gamma \cdot \delta$ is the link obtained by stacking the diagram
$\gamma$ on top of $\delta,$ then $\gamma \cdot \delta=\delta \cdot \gamma.$
It is easy to see directly that those two links are isotopic in $\Sigma \times S^1.$

By relation $R_4,$ we note if a strand of an arrowed diagram has some number
of arrows (maybe with different directions) in succession, only the algebraic
number of arrows matters.
It will often be useful to write an arrow indexed by $n \in \Z$ to denote~$n$
successive arrows all pointing to the direction of the arrow.
If~$n$ is negative, it has to be understood as $|n|$ arrows pointing in
the opposite direction.

In all this article, by an {\em arrowed multicurve}, or
{\em multicurve} for short, we mean an arrowed diagram without double points.
In other words, this is a diagram whose projection on~$\Sigma$ is a
$1$-dimensional submanifold of~$\Sigma$. Each of its components may be
homotopically trivial, or essential, and we count these curves to form the
degree and complexity of the diagram.
\begin{definition}\label{def:degree}
  If $\gamma$ is an arrowed multicurve and $n$ (resp. $m$) are its number
  of essential, non-separating (resp. non-trivial separating) simple closed
  curve components, then we define
  \[ \deg (\gamma)=n+2m.\]
  We also define the complexity of $\gamma$ as
  $(\deg(\gamma), n+m).$
  Complexities are ordered using the lexicographic order.
\end{definition}
Notice that the above notion of degree does not depend on the number of
arrows that decorate each component of $\gamma.$

Note that in the definition of the degree and complexity, we do not count
the trivial components of $\gamma$.
This is because we can essentially get rid of them, as follows.
\begin{proposition}\label{prop:multicurves_span}
  For $\Sigma$ a closed compact oriented surface, the skein module
  $\Sk(\Sigma \times S^1)$ is spanned by arrowed multicurves containing no
  trivial component, and by the arrowed multicurves consisting of just the
  trivial curve with any number of arrows.
  
  Moreover, every arrowed multicurve is a linear combination of arrowed
  multicurves as above and with same degree and complexity.
\end{proposition}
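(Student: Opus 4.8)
The plan is to dispose of trivial components one at a time, by an induction on the number of trivial components of an arrowed multicurve $\gamma$, always producing linear combinations of diagrams with strictly fewer trivial components and with the \emph{same} degree and complexity. Since neither $\deg$ nor the complexity counts trivial components, a single trivial component either surrounds an innermost disk on $\Sigma$ that meets no other component of $\gamma$, or it can be isotoped to be innermost with the interior of the disk it bounds disjoint from $\gamma$ (an innermost such curve always exists among the trivial components). So it suffices to treat one innermost trivial circle, say carrying $n\in\Z$ arrows, bounding a disk $D$ disjoint from the rest of $\gamma$.

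First I would handle the case $n=0$: an unknotted, unarrowed trivial circle bounding a disk is exactly the situation of relation $K_2$, so it contributes a factor $(-A^2-A^{-2})$ and can simply be deleted. This already removes every trivial component carrying $0$ arrows. Next, for $n\neq 0$, the key point is that an arrowed trivial circle with $n$ arrows bounds a disk in $\Sigma\times S^1$ only after one lets the circle wind $n$ times around the $S^1$ factor; using the $R_4$/$R_5$ moves together with a skein-resolution of a small clasp, I would express the trivial circle with $n$ arrows as a $\QQ(A)$-linear combination of the trivial circle with $0$ arrows and the trivial circle with $n$ arrows having, in addition, a trivial circle with fewer arrows split off — or more simply, I would show that a trivial circle with $n$ arrows sitting in a ball equals a \emph{scalar} times the trivial circle with $0$ arrows (equivalently, the empty diagram), because in the solid torus $D^2\times S^1$ the skein module is one-dimensional over each homological grading and the class with trivial homology is spanned by the unknot. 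Concretely: inside the ball where $\gamma$ meets $D$ one sees only this one circle, and a circle with $n$ arrows is isotopic to $n$ parallel copies of the core-direction pushed into a ball, which by repeated application of $K_1$ and $K_2$ reduces to a scalar multiple of the unknot. Pulling this scalar out and deleting the unknot removes the trivial component without changing the rest of the diagram, hence without changing degree or complexity.

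For the two assertions of the proposition: the ``moreover'' clause is exactly the statement that each such removal step stays within a fixed degree and complexity, which holds because we only ever delete a trivial component (and multiply by a scalar), operations invisible to $\deg$ and to $n+m$. For the spanning statement, I would argue by the same induction: given any link $L\subset\Sigma\times S^1$, put it in arrowed-diagram form, resolve all crossings via $K_1$ to write $[L]$ as a linear combination of arrowed multicurves (this is the standard fact that multicurves span the skein module of a surface$\times S^1$, using $K_1$), then apply the trivial-component elimination above to each of them. What remains are multicurves with no trivial component, \emph{except} for the single case where the multicurve \emph{is} nothing but trivial circles: an innermost such circle with $n$ arrows cannot be pushed off the others without first being reduced, but once we are down to a single trivial circle it is one of the allowed generators, so we stop there.

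The main obstacle is the $n\neq 0$ case: one must be careful that ``the disk $D$ is disjoint from $\gamma$ on $\Sigma$'' does not mean the circle bounds a disk in $\Sigma\times S^1$, since the $n$ arrows force the circle to wrap around the $S^1$ direction. The honest way through is to confine attention to a small product neighbourhood $D\times S^1\subset\Sigma\times S^1$ over the disk $D$ — a solid torus — and to use that the Kauffman skein module of the solid torus is free on $\{x^k\}_{k\ge 0}$ with $x$ the core; the trivial circle with $n$ arrows represents $\pm A^{\mathrm{(something)}}$ times $x^{0}=$ (empty) in the appropriate grading only when $n\equiv 0$, and otherwise represents a basis element in a nonzero grading — which would be a problem. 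The resolution, and the reason the statement is nonetheless true, is that an arrowed trivial circle with $n$ arrows is \emph{not} $x^{|n|}$ in that solid torus: it is homotopically trivial \emph{on $\Sigma$} but winds $n$ times in $S^1$, hence it is isotopic in $D\times S^1$ to $|n|$ parallel copies of the $S^1$-fiber, and fibers are precisely the generator of $H_1$ that is \emph{not} detected by the core $x$ of $D\times S^1$ — wait, it \emph{is} the core. I will therefore spell out carefully that the trivial circle with $n$ arrows, pushed into $D\times S^1$, is $|n|$ parallel fibers, apply the well-known formula reducing $k$ parallel fibers in the solid torus to a scalar multiple of the empty link plus lower-order fiber terms (via $K_1$ on adjacent fibers and $K_2$), and conclude that it equals a scalar multiple of the $0$-arrow trivial circle modulo diagrams with strictly fewer arrows, closing the induction on $|n|$ as well. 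Getting this reduction formula cleanly stated — essentially the Chebyshev/Jones–Wenzl style recursion for parallel fibers — is the one place real computation is needed.
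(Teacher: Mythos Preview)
Your approach has a genuine gap in the $n\neq 0$ case. You try to eliminate an arrowed trivial circle by working inside the solid torus $D\times S^1$ over the disk it bounds, and reducing it there to a scalar. This cannot work: the skein module of the solid torus is the polynomial algebra $\QQ(A)[x]$ with $x$ the core, and the trivial circle with $n$ arrows is (up to framing) a genuinely nontrivial element of positive degree in~$x$ (for $n=1$ it \emph{is} the core). There is no relation in $\Sk(D\times S^1)$ collapsing it to a scalar. Two specific errors: the trivial circle with $n$ arrows is a single connected curve, not isotopic to $|n|$ parallel fibers; and the ``well-known formula reducing $k$ parallel fibers to a scalar plus lower-order terms'' does not exist, since $1,x,x^2,\ldots$ are linearly independent in the solid-torus skein module.

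The point you are missing is that removing an arrowed trivial circle requires the presence of \emph{another} component. This is exactly why the proposition keeps ``a single trivial circle with any number of arrows'' among the generators: if the multicurve is just one trivial circle, you genuinely cannot simplify further. When there is another component, isotope the trivial circle next to one of its strands, use the $R_5$ move to transfer one arrow across that strand (creating a crossing), and resolve with $K_1$. This produces a local relation expressing the diagram with an $n$-arrowed trivial circle as a combination of diagrams where the trivial circle carries $n\pm 1$ or $n-2$ arrows (the extra arrows being absorbed onto the neighbouring strand). Iterating drives the arrow count on the trivial circle to $0$, after which $K_2$ removes it. Since only the trivial component is modified, degree and complexity are preserved throughout. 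The paper's proof is precisely this interaction-with-a-neighbouring-strand argument.
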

\begin{proof}
  It is clear that repeatedly applying Kauffman relations $K_1$ to the
  crossings of an arrowed diagram will express it as a $\QQ(A)$-linear
  combination of arrowed multicurves. Thus the main point of
  Proposition~\ref{prop:multicurves_span} is its second assertion:
  we can eliminate a trivial component (with arrows) if the multicurve
  has at least one other component, without changing
  its degree or complexity.
  
  Let us consider an arrowed multicurve $\gamma$ containing a trivial curve
  and another component. If the trivial curve contains no arrow then the
  Kauffman relation $K_2$ gets rid of it. Otherwise, we use the relation:
  \begin{center}
\begingroup%
  \makeatletter%
  \providecommand\color[2][]{%
    \errmessage{(Inkscape) Color is used for the text in Inkscape, but the package 'color.sty' is not loaded}%
    \renewcommand\color[2][]{}%
  }%
  \providecommand\transparent[1]{%
    \errmessage{(Inkscape) Transparency is used (non-zero) for the text in Inkscape, but the package 'transparent.sty' is not loaded}%
    \renewcommand\transparent[1]{}%
  }%
  \providecommand\rotatebox[2]{#2}%
  \ifx\svgwidth\undefined%
    \setlength{\unitlength}{273.41396484bp}%
    \ifx\svgscale\undefined%
      \relax%
    \else%
      \setlength{\unitlength}{\unitlength * \real{\svgscale}}%
    \fi%
  \else%
    \setlength{\unitlength}{\svgwidth}%
  \fi%
  \global\let\svgwidth\undefined%
  \global\let\svgscale\undefined%
  \makeatother%
  \begin{picture}(1,0.39787768)%
    \put(0,0){\includegraphics[width=\unitlength]{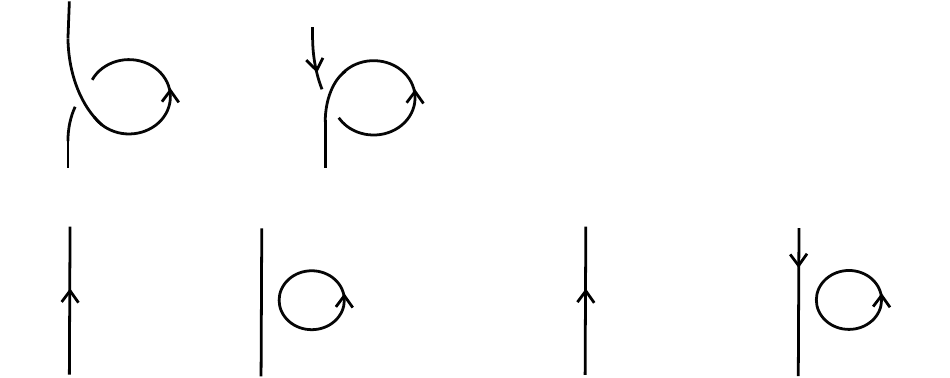}}%
    \put(0.19202325,0.2996165){\color[rgb]{0,0,0}\makebox(0,0)[lb]{\smash{$n$}}}%
    \put(0.25187561,0.27079866){\color[rgb]{0,0,0}\makebox(0,0)[lb]{\smash{$=$}}}%
    \put(0.44975041,0.2985081){\color[rgb]{0,0,0}\makebox(0,0)[lb]{\smash{$n-1$}}}%
    \put(-0.00083436,0.07794108){\color[rgb]{0,0,0}\makebox(0,0)[lb]{\smash{$A$}}}%
    \put(0.0805734,0.10772953){\color[rgb]{0,0,0}\makebox(0,0)[lb]{\smash{$n$}}}%
    \put(0.12444486,0.07541818){\color[rgb]{0,0,0}\makebox(0,0)[lb]{\smash{$+A^{-1}$}}}%
    \put(0.37527493,0.09715291){\color[rgb]{0,0,0}\makebox(0,0)[lb]{\smash{$n$}}}%
    \put(0.44177753,0.07424647){\color[rgb]{0,0,0}\makebox(0,0)[lb]{\smash{$=$}}}%
    \put(0.49295855,0.07406182){\color[rgb]{0,0,0}\makebox(0,0)[lb]{\smash{$A^{-1}$}}}%
    \put(0.62489704,0.11276979){\color[rgb]{0,0,0}\makebox(0,0)[lb]{\smash{$n-2$}}}%
    \put(0.74609761,0.08045844){\color[rgb]{0,0,0}\makebox(0,0)[lb]{\smash{$+A$}}}%
    \put(0.94117208,0.09745356){\color[rgb]{0,0,0}\makebox(0,0)[lb]{\smash{$n-1$}}}%
  \end{picture}%
\endgroup%

  \end{center}
  In the above, the left strand stands for a strand of another component of
  the starting multicurve (which is the second term of the left hand side
  of the second equality).
  
  By using this relation, we can express $\gamma$ as a
  linear combination of arrowed mulicurves where the number of arrows on
  a trivial component has increased, or decreased: this number of arrows can
  therefore be pushed to $0,$ and then the Kauffman relation $K_2$ gets rid
  of that trivial component.
  We then proceed inductively to erase the trivial components, until there
  is either no trivial component left, or just one trivial component and no
  other component.
\end{proof}

\section{Reducing the degree of multicurves}
\label{sec:reducing_degree}
The main result of this long section will be the following.
\begin{proposition}\label{prop:generatedbydegree2}
  The skein module $\Sk (\Sigma \times S^1)$ is spanned by arrowed multicurves
  of degree $\leqslant 1.$ That is $\Sk(\Sigma \times S^1)$ is spanned by
  arrowed non-separating curves and by arrowed trivial curves.
\end{proposition}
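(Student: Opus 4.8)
My plan is to induct on the complexity of an arrowed multicurve, proving the sharper statement that every arrowed multicurve $\gamma$ with $\deg(\gamma)\geqslant 2$ is a $\QQ(A)$-linear combination of arrowed multicurves of \emph{strictly smaller complexity}. By Proposition~\ref{prop:multicurves_span} it is enough to treat $\gamma$ with no trivial component (the lone trivial curve has degree $0$), and any trivial circles created along the way are absorbed by that same proposition together with $K_2$; iterating the reduction then terminates at multicurves of degree $\leqslant 1$, which is exactly the assertion.

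The engine is a band-sum relation, to be established in Section~\ref{sec:relations}. If two strands of $\gamma$ --- of the same component or of two different components --- run parallel along an embedded band $b$ in $\Sigma\times\{0\}$, then pushing one strand over the other and resolving the two resulting crossings with $K_1$ gives an identity
\[ \gamma \;=\; A^{\pm 2}\,\gamma \;+\; u\cdot\gamma_b, \]
where $\gamma_b$ is obtained from $\gamma$ by band-summing the two strands along $b$ and $u$ is invertible (it accounts for a possibly disjoint trivial circle removed by $K_2$). Since $1-A^{\pm 2}$ is a unit times $\lbrace 1\rbrace$, we may solve for $\gamma$: this trades $\gamma$ for $\gamma_b$ at the sole cost of inverting $\lbrace 1\rbrace$, which is exactly the flexibility the paper allows itself, and which is the source of the later remark that torsion in $\Sk(\Sigma\times S^1,\Z[A^{\pm1}])$ is $\lbrace k\rbrace$-torsion.

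The real task is to choose $b$ so that $\gamma_b$ is strictly simpler. Suppose first that $\gamma$ has at least two essential components. Because $\Sigma$ is connected, some complementary region of $\gamma$ has two \emph{distinct} essential components $c_1,c_2$ in its closure; take $b$ inside that region with one foot on each. Then $\gamma_b$ merges $c_1$ and $c_2$ into a single curve, and a one-line check of the three possible topological types of that curve (non-separating, non-trivially separating, trivial), using $\deg=n+2m$, shows that the pair $(\deg,\,n+m)$ strictly decreases in lexicographic order in every case. So the induction applies, and we are reduced to the case where $\gamma$ is a single non-trivially separating curve $\delta$, carrying some arrows, of complexity $(2,1)$.

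This last case is where I expect the difficulty to concentrate: there is no second component to band $\delta$ to, and self-band-summing $\delta$ only produces curves of complexity $\geqslant(2,1)$, so a plain complexity induction stalls. I would instead run a secondary induction on the genus $h\geqslant1$ of the subsurface $\Sigma_1\subset\Sigma\times\{0\}$ bounded by $\delta$, using the relations of Section~\ref{sec:relations}: pushing a finger of $\delta$ across a handle of $\Sigma_1$ and around the $S^1$ factor, then resolving, should express $\delta$ as a combination of non-separating and trivial arrowed curves (complexity $\leqslant(1,1)$) together with separating curves bounding a subsurface of genus $h-1$ (possibly with extra arrows); for $h=1$ these last terms are trivial curves, so the secondary induction terminates, and combined with the primary induction this finishes the proof. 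The delicate points are to produce such an identity while keeping the error terms \emph{exactly} of this form --- in particular avoiding terms like two parallel copies of a non-separating curve, which would have complexity $(2,2)>(2,1)$ and destroy the induction --- and to keep careful track of the arrows introduced by the moves along $S^1$; this is where the bulk of the section's work should go.
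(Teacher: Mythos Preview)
Your ``engine'' --- the band-sum relation $\gamma = A^{\pm 2}\gamma + u\,\gamma_b$ --- does not hold. Pushing one strand of a multicurve over a parallel one is a Reidemeister-II move in $\Sigma\times\{0\}$, and resolving the two crossings with $K_1$ recovers $\gamma$ on the nose: the four resolutions combine (using $K_2$ on the trivial circle) to the tautology $\gamma=\gamma$. This is just the standard proof of $R_2$-invariance of the Kauffman bracket; nothing about $\Sigma\times S^1$ changes it. So the identity you propose to solve for $\gamma$ does not exist, and the whole primary induction collapses.

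The genuine source of relations in $\Sk(\Sigma\times S^1)$ is the commutation $\gamma\cdot\delta=\delta\cdot\gamma$ in the $S^1$-direction (equivalently, the move $R_5$): resolving both stackings and subtracting gives identities among multicurves with coefficients $\lbrace k\rbrace$. Even the simplest instance --- two parallel non-separating strands, Proposition~\ref{prop:2holed_torus_relation} --- is more delicate than your claim: one obtains a \emph{pair} of equations relating arrow-shifted copies $D_{a,b}$ and $D_{a+1,b+1}$ to two different arrowed band-sums, and only after solving the $2\times 2$ system does one get $\lbrace 2\rbrace D_{a,b}\equiv 0$ modulo lower complexity. There is no single relation of the shape $\gamma=A^{\pm2}\gamma+u\,\gamma_b$.

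The paper's actual route is accordingly quite different from your sketch. It does not run a direct complexity induction on band-sums; instead it (i) uses the torus and two-holed-torus relations to force each complementary piece of the separating curves to contain at most one non-separating curve, (ii) uses the sphere relations $(S_n)$ and $(S_0)$ to make the dual graph linear and to clear non-separating curves from valence-$2$ vertices, and (iii) runs an operator argument (the shift $s$ and $\Delta_\pm=A^{\pm1}s-A^{\mp1}$) on a fixed ``sausage'' pants decomposition to push everything to the ends. Your proposed secondary induction on the genus bounded by a lone separating curve is in spirit close to step~(iii), but even there the arrow bookkeeping forces one to track an infinite family of arrowed diagrams and to invert a polynomial in $s$, not a single identity.
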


In the next subsection, we first introduce a few helpful
relations that relate a few multicurves of the same degree, up to lower degree
terms. We will use them to prove Proposition~\ref{prop:generatedbydegree2}
inductively, showing that any arrowed multicurve of degree $n\geqslant 2$
is a linear combination of arrowed multicurves with smaller degree.

\subsection{The sphere and the torus relation}
\label{sec:relations}
Our first relation relates multicurves that sit on a $5$-holed sphere
subsurface of~$\Sigma:$
\begin{proposition}\label{prop:sphere_relation}
  For any $n\geqslant 1,$ we have the sphere relation $(S_n)$ between
  multicurves of degree $2n+6:$
  \begin{center}
\begingroup%
  \makeatletter%
  \providecommand\color[2][]{%
    \errmessage{(Inkscape) Color is used for the text in Inkscape, but the package 'color.sty' is not loaded}%
    \renewcommand\color[2][]{}%
  }%
  \providecommand\transparent[1]{%
    \errmessage{(Inkscape) Transparency is used (non-zero) for the text in Inkscape, but the package 'transparent.sty' is not loaded}%
    \renewcommand\transparent[1]{}%
  }%
  \providecommand\rotatebox[2]{#2}%
  \ifx\svgwidth\undefined%
    \setlength{\unitlength}{276.88706055bp}%
    \ifx\svgscale\undefined%
      \relax%
    \else%
      \setlength{\unitlength}{\unitlength * \real{\svgscale}}%
    \fi%
  \else%
    \setlength{\unitlength}{\svgwidth}%
  \fi%
  \global\let\svgwidth\undefined%
  \global\let\svgscale\undefined%
  \makeatother%
  \begin{picture}(1,0.39647297)%
    \put(0,0){\includegraphics[width=\unitlength]{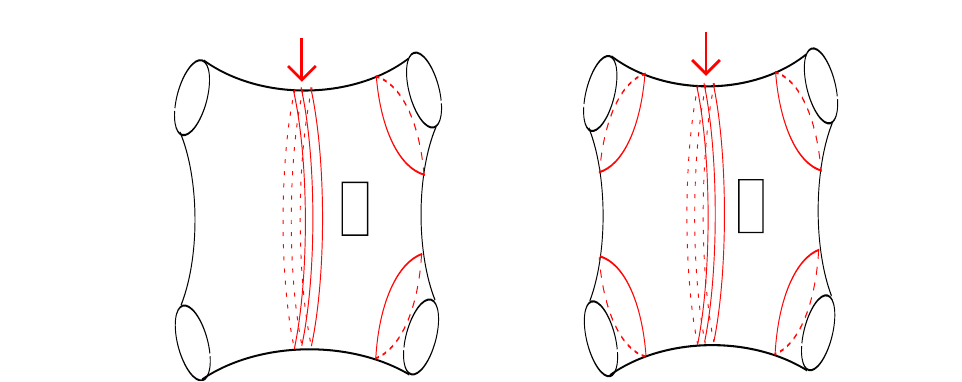}}%
    \put(-0.00103553,0.1536845){\color[rgb]{0,0,0}\makebox(0,0)[lb]{\smash{$\lbrace 2n+2\rbrace$}}}%
    \put(0.48742132,0.14940954){\color[rgb]{0,0,0}\makebox(0,0)[lb]{\smash{$+\lbrace 2n\rbrace$}}}%
    \put(0.86724068,0.15087898){\color[rgb]{0,0,0}\makebox(0,0)[lb]{\smash{$\equiv 0$}}}%
    \put(0.26776135,0.36269582){\color[rgb]{1,0,0}\makebox(0,0)[lb]{\smash{$n+1$ curves}}}%
    \put(0.69388434,0.36893764){\color[rgb]{1,0,0}\makebox(0,0)[lb]{\smash{$n$ curves}}}%
  \end{picture}%
\endgroup%

  \end{center}
  In the above figure, the two boundary components on the left are non-separating
  curves of the ambiant surface~$\Sigma,$, while all the other red curves
  are essential, separating curves of~$\Sigma.$
  The black square may be homotopically trivial in~$\Sigma$, or not.
  Finally, $\equiv$ is equality up to a linear combination of multicurves
  with degrees $\leqslant 2n+5.$
\end{proposition}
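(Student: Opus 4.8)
The proof proposal is to establish the sphere relation $(S_n)$ by a direct skein-theoretic computation inside a $5$-holed sphere $\Pi\subset\Sigma$, reducing everything to an identity in the relative skein module of a planar surface, where the Kauffman relations let us compute with the Jones--Wenzl-type calculus. First I would set up the picture: the two distinguished boundary components of $\Pi$ that are non-separating in $\Sigma$ carry the ``nontrivial'' curves, and each of the other boundary components bounds a separating curve in $\Sigma$; the black square represents a box through which an unknown number of strands pass, but this box is inert for the purpose of the computation since skein relations are local and we only manipulate strands near the $n+1$ (resp.\ $n$) parallel curves. The key observation is that a family of $n+1$ (resp.\ $n$) parallel copies of a curve, each with one arrow, can be analyzed using the identity already used in the proof of Proposition~\ref{prop:multicurves_span} (the ``trivial curve relation''), together with repeated applications of $K_1$ to the crossings created when we try to merge adjacent parallel strands.

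**Key steps.** The plan is to proceed as follows. (1) Introduce the element $e_k\in\Sk(A^2\times S^1)$ obtained from $k$ parallel arrowed strands wrapping once around the $S^1$ direction, and show by an induction on $k$ — using the local relation displayed just before Section~\ref{sec:reducing_degree} — that $e_k$ satisfies a three-term linear recursion with coefficients in $\{j\}=A^j-A^{-j}$; this is where the Chebyshev-like coefficients $\{2n+2\}$ and $\{2n\}$ will appear. (2) Interpret the left-hand side of $(S_n)$ as a pairing of such an $e_{n+1}$ (resp.\ $e_n$) configuration, sitting around two of the holes of $\Pi$, against the rest of the diagram; then resolve all crossings between the $e_{n+1}$-bundle and the $e_n$-bundle via $K_1$. (3) Observe that each resolution either reconnects strands so as to decrease the number of essential components (hence lowers the degree, landing in the ``$\equiv$'' error term of degree $\le 2n+5$), or else produces a configuration that is isotopic, after an $R_4/R_5$ move and an isotopy across the square, to one of the two terms on the left-hand side with $n$ replaced by $n\pm1$. (4) Collect the surviving top-degree terms; the combinatorics of the recursion from step (1) forces the coefficient of one top term to be $\{2n+2\}$ and of the other to be $\{2n\}$, and forces the remaining top-degree contributions to cancel in pairs, yielding $\{2n+2\}(\cdots)+\{2n\}(\cdots)\equiv 0$.

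**Main obstacle.** The hard part will be step (3)--(4): controlling which resolutions genuinely contribute at the top degree $2n+6$ and verifying that all other top-degree terms cancel, rather than merely bounding them away. This requires a careful bookkeeping of homology classes in $H_1(\Sigma,\Z/2)$ — only resolutions preserving the relevant $\Z/2$-homology can survive, which is what pins the count of essential components and hence the degree — together with a precise matching of arrow counts (using that only the algebraic number of arrows matters, by $R_4$). I expect the cleanest way to organize this is to first prove the analogous identity in the solid-torus-times-interval local model (a genuinely planar computation), and then transport it into $\Sigma\times S^1$ via the embedding $\Pi\times S^1\hookrightarrow\Sigma\times S^1$, noting that any lower-degree terms created in the local model remain lower-degree after the embedding. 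The black square being possibly essential in $\Sigma$ is a red herring: since the computation never touches the strands inside it, the identity holds verbatim in both cases, which is precisely the content of the last sentence of the statement.
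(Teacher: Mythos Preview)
Your proposal does not match the paper's argument, and more importantly it is missing the single idea that makes the relation work: the \emph{commutation identity} $\gamma\cdot\delta=\delta\cdot\gamma$ in $\Sk(\Sigma\times S^1)$. The paper chooses two specific simple closed curves $\gamma,\delta$ (with no arrows) on the $5$-holed sphere whose diagram has $2n+4$ crossings. Resolving $\gamma\cdot\delta$ and $\delta\cdot\gamma$ produces the \emph{same} multicurves, but with each coefficient $A^k$ replaced by $A^{-k}$; subtracting therefore gives a genuine relation with coefficients of the form $A^k-A^{-k}=\{k\}$. A careful degree count on the green arcs then shows that exactly two resolutions reach the top degree $2n+6$, with coefficients $A^{\mp(2n+2)}$ and $A^{\mp 2n}$, yielding $\{2n+2\}(\cdots)+\{2n\}(\cdots)\equiv 0$ directly. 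That is the whole proof.

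Your plan lacks any mechanism producing an identity that is \emph{equal to zero}. A three-term recursion on a family $e_k$ would at best express $e_{n+1}$ in terms of $e_n$ and $e_{n-1}$; it does not by itself force a linear combination $\{2n+2\}X+\{2n\}Y$ to vanish modulo lower degree. The specific shape of the coefficients (quantum integers with the same sign, summing to something $\equiv 0$) is a signature of the commutation trick, not of a Chebyshev recursion. Second, you have misread the picture: the $n+1$ (resp.\ $n$) parallel red curves carry \emph{no arrows}; they are ordinary simple closed curves in $\Sigma\times\{\mathrm{pt}\}$, not strands ``wrapping once around the $S^1$ direction,'' so the arrow-shift relation from Proposition~\ref{prop:multicurves_span} is not the relevant tool here. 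Third, the $H_1(\Sigma,\Z/2)$ bookkeeping you propose is not needed and would not help: the control on top-degree resolutions in the paper is purely a local count of how many green arcs can sit on each component, and the separating/non-separating status of the resulting curves. If you want to salvage your outline, the fix is to replace steps (1)--(4) by: pick the explicit pair $(\gamma,\delta)$, write down $\gamma\cdot\delta-\delta\cdot\gamma=0$, resolve, and do the arc-by-arc degree bound.
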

We note that the multicurves above are indeed of degree $2n+6:$ as the middle~$n$
components and the two rightmost ones are separating they contribute $2n+4$ to
the degree, and for each side one adds either a separating curve of two
non-separating curves, adding a total of $2$ more to the degree.
We will sometimes refer to the relations $(S_n)$ as the \textit{sphere relations}.
\begin{proof}
  \begin{figure}[!h]
  \centering
  \def \svgwidth{1.1\columnwidth}
  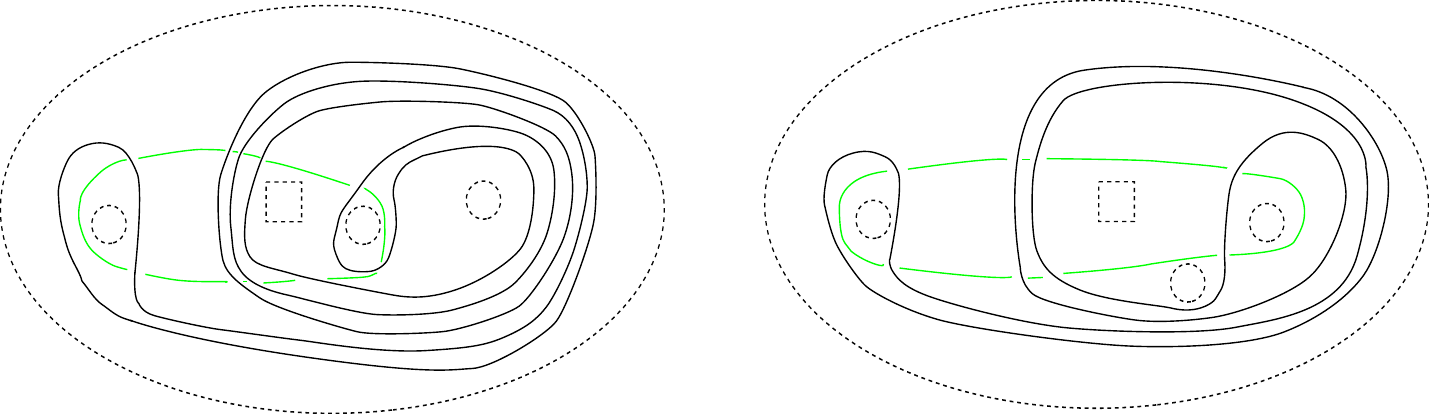
  \caption{The black curve $\gamma$ stands atop the green curve $\delta$ inside a $4$-holed disk.
  The two rightmost boundary components are the rightmost boundary components in
  Proposition~\ref{prop:sphere_relation}. There are $n\geqslant 1$ strands of
  $\gamma$ going in between the two squares, and the left (resp. right) diagram
  correspond to whether $n$ is odd or even. In each diagram, we labeled the
  crossings from $0$ to $2n+3,$ following the green curve.}
  \label{fig:sphere_resolution}
  \end{figure}
  We prove Proposition~\ref{prop:sphere_relation} by considering the resolutions
  of the diagram shown in Figure~\ref{fig:sphere_resolution}. In this diagram,
  the $5$-holed sphere is drawn as a four-holed disk, with the boundary of the
  disk and the leftmost hole corresponding to the two boundary components on the
  left of Relation~$(S_n).$ We have to consider two slightly different patterns
  depending whether~$n$ is odd or even.
  The diagram shows two simple closed curves~$\gamma$ and~$\delta,$ with~$\gamma$
  (in black) standing on top of~$\delta$ (in green). We will produce Relation~$(S_n)$
  from the commutation relation $\gamma \cdot \delta= \delta \cdot \gamma,$ after a
  careful study of the multicurves
  that appear after resolving the crossings using Kauffman relations.
  We recall that in $\gamma \cdot \delta$ and $\delta \cdot \gamma$ the exact same
  resolutions appear, with coefficients changed by replacing $A$ by $A^{-1}.$
  With this in head, it is sufficient to study the resolutions of $\gamma \cdot \delta.$
  There are $2n+4$ crossings between $\gamma$ and $\delta,$ and thus $2^{2n+4}$
  resolutions.
  Our claim is that among all those resolutions, there are only $2$ of maximal
  degree, and that this maximal degree is $2n+6.$
  
  Notice that the $2n+4$ crossings
  cut the black and green curves~$\gamma$ and~$\delta$ into $2n+4$ black arcs
  and $2n+4$ green arcs. Moreover, any component of any resolution consists of
  several green and black arcs, alternating between green and
  black:
  any such component is composed of an even number of arcs.
  We will focus mainly on the green arcs, and denote them by $(i,i+1)$
  with $0\leqslant i\leqslant 2n+3$, with cyclic notation,
  as suggested in Figure~\ref{fig:sphere_resolution}.
  
  Given a resolution $\lambda$ of $\gamma\cdot\delta$ and a green arc~$a$, consider the
  connected component $\lambda'$ of~$\lambda$ containing~$a$. We will say that the
  {\em contribution of~$a$ to the degree of~$\lambda$} equals~$\frac{p(a)}{q(a)}$, where
  $q(a)$ is the number of green arcs in $\lambda'$, and where
  $p(a)$ equals $0$ if~$\lambda'$ is a non essential curve of $\Sigma$,
  $1$ if it is a non-separating simple closed curve, and $2$ if it is
  a non-trivial separating curve. This way, $\deg(\lambda)$ is the sum of
  the contributions of its green arcs.
  
  Let us bound, individually, the contribution each green arc can
  have to the degree of a resolution of $\gamma\cdot\delta$.
  The arcs $(2n+3,0)$, $(0,1)$, $(n+1,n+2)$ and $(n+2,n+3)$ are the only ones
  that can form a closed curve containing no other green arcs: for the two
  first ones that curve is non-separating, for the two others it is separating.
  Hence the maximal contributions they can give to the degree are respectively
  $1$, $1$, $2$ and $2$. For any other green arc $a$, we will have
  $p(a)\leqslant 2$ and $q(a)\geqslant 2$, hence the contribution $a$ cannot exceed~$1$.
  By summing up all these contributions, we deduce that for all resolution~$\lambda$
  of $\gamma\cdot\delta$ we have $\deg(\lambda)\leqslant 2n+6$.
  
  Finally, let us examine in which resolutions of $\gamma\cdot\delta$ the
  degree $2n+6$ can indeed be reached. The contributions of both green
  arcs $(n+1,n+2)$ and $(n+2,n+3)$ need to equal~$2$. For this, these green
  arcs have to be matched to one black arc to make a separating curve in~$\Sigma$.
  Thus, the crossings $n+1$, $n+2$ and $n+3$ need to receive the
  resolutions~$-$,~$+$ and~$-$ respectively. Now in order to reach maximal contribution
  to the degree, the green arc $(n, n+1)$ has to be paired
  with another green arc, and two black arcs, to form a separating curve
  of~$\Sigma$. The only possibility for that is to be paired with the green
  arc $(n+3,n+4)$, and the crossings numbered $n$ and $n+3$ have to receive
  both the resolution~$-$. The same argument with the arc $(n-1,n)$ implies
  it is paired with the arc $(n+4,n+5)$ and the crossings $n-1$ and $n+4$
  have the resolution~$-$. We continue further left, until the arc $(1,2)$
  which is paired with the arc $(2n+2,2n+3)$ (it can also be paired with
  the arc $(2n+3,0)$ to form a closed curve, but that curve is trivial in~$\Sigma$).
  In conclusion, all crossings except $n+2$ and maybe~$0$, have to receive
  the resolution~$-$.
  
  Depending on the resolution of the crossing labeled~$0$, we have two possible
  diagrams of maximal degree. The one for which the resolution of~$0$
  is~$-$ yields first diagram of the relation~$(S_n)$. It has $2g-3$ signs~$-$
  and one sign~$+$ hence it comes with coefficient $A^{-2g+2}$ in $\gamma\cdot\delta$,
  and with coefficient $A^{2g-2}$ in $\delta\cdot\gamma$. The positive resolution
  of~$0$ yields the second diagram, with coefficient $A^{-2g}$ in $\gamma\cdot\delta$
  and $A^{2g}$ in $\delta\cdot\gamma$. The equality $\gamma\cdot\delta=\delta\cdot\gamma$
  in the skein module thus proves the formula~$(S_n)$.
\end{proof}
We defined the sphere relation $(S_n)$ for any $n\geqslant 1.$ We will need another
relation which we will call $(S_0).$ It is again a relation between multicurves on
a $5$-holed sphere subsurface, although it has a slightly different form.
\begin{proposition}\label{prop:sphere_relation2}
  In $\Sk( \Sigma \times S^1),$ we have the following relation between multicurves
  that coincide except in a $5$-holed sphere:
  \begin{center}
\begingroup%
  \makeatletter%
  \providecommand\color[2][]{%
    \errmessage{(Inkscape) Color is used for the text in Inkscape, but the package 'color.sty' is not loaded}%
    \renewcommand\color[2][]{}%
  }%
  \providecommand\transparent[1]{%
    \errmessage{(Inkscape) Transparency is used (non-zero) for the text in Inkscape, but the package 'transparent.sty' is not loaded}%
    \renewcommand\transparent[1]{}%
  }%
  \providecommand\rotatebox[2]{#2}%
  \ifx\svgwidth\undefined%
    \setlength{\unitlength}{376.37714844bp}%
    \ifx\svgscale\undefined%
      \relax%
    \else%
      \setlength{\unitlength}{\unitlength * \real{\svgscale}}%
    \fi%
  \else%
    \setlength{\unitlength}{\svgwidth}%
  \fi%
  \global\let\svgwidth\undefined%
  \global\let\svgscale\undefined%
  \makeatother%
  \begin{picture}(1,0.56491584)%
    \put(0,0){\includegraphics[width=\unitlength]{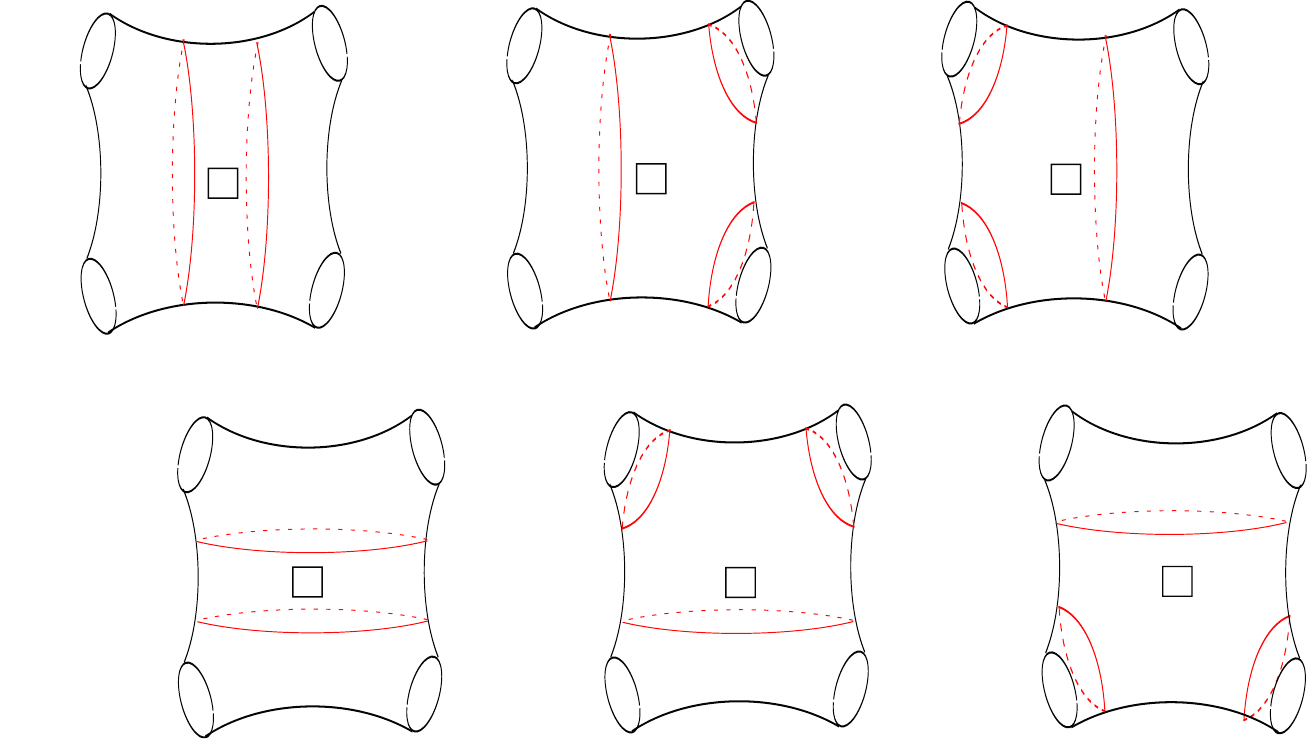}}%
    \put(-0.0006061,0.41352339){\color[rgb]{0,0,0}\makebox(0,0)[lb]{\smash{$\lbrace 4 \rbrace$}}}%
    \put(0.2699294,0.41567048){\color[rgb]{0,0,0}\makebox(0,0)[lb]{\smash{$+\lbrace 2 \rbrace$}}}%
    \put(0.593069,0.41244982){\color[rgb]{0,0,0}\makebox(0,0)[lb]{\smash{$+\lbrace 2 \rbrace$}}}%
    \put(0.04811704,0.11096515){\color[rgb]{0,0,0}\makebox(0,0)[lb]{\smash{$=\lbrace 4 \rbrace$}}}%
    \put(0.34441783,0.10667096){\color[rgb]{0,0,0}\makebox(0,0)[lb]{\smash{$+\lbrace 2 \rbrace$}}}%
    \put(0.66755749,0.1034503){\color[rgb]{0,0,0}\makebox(0,0)[lb]{\smash{$+\lbrace 2 \rbrace$}}}%
  \end{picture}%
\endgroup%

  \end{center}
\end{proposition}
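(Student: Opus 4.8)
The plan is to establish $(S_0)$ by exactly the mechanism used for Proposition~\ref{prop:sphere_relation}: exploit a commutation relation $\gamma\cdot\delta=\delta\cdot\gamma$ inside the $5$-holed sphere, but now for the degenerate configuration that corresponds to ``$n=0$'' in Figure~\ref{fig:sphere_resolution} and was excluded there (the argument for $(S_n)$ required $n\geqslant 1$ strands passing between the two squares). Concretely, I would place a simple closed curve $\gamma$ on top of a simple closed curve $\delta$ inside a $5$-holed sphere subsurface $P\subset\Sigma$, chosen so that $\gamma$ and $\delta$ meet in a small number $N$ of crossings and so that the ``clean'' resolutions of $\gamma\cdot\delta$ (after isotopy in $P$) are precisely the six multicurves appearing in the statement. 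As in the proof of Proposition~\ref{prop:sphere_relation}, stacking $\gamma$ over $\delta$ versus $\delta$ over $\gamma$ produces the same set of resolutions, with every coefficient $A^j$ replaced by $A^{-j}$.

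The steps would then be: resolve every crossing of $\gamma\cdot\delta$ (and of $\delta\cdot\gamma$) with the Kauffman relation $K_1$; observe that the resolutions whose coefficient is $A^{0}$ occur identically on both sides and cancel in the difference $\gamma\cdot\delta-\delta\cdot\gamma$; simplify each remaining resolution that creates a homotopically trivial loop using $K_2$, or using the trivial-curve relation of Proposition~\ref{prop:multicurves_span} if that loop also carries arrows. Here the key numerical fact is the identity $(-A^{2}-A^{-2})\,\{4\}^{0}$... more precisely $(-A^{2}-A^{-2})\,\lbrace 2\rbrace=-\lbrace 4\rbrace$, so that a resolution of coefficient $A^{2}$ carrying a trivial loop, once the loop is removed, contributes a $\lbrace 4\rbrace$-multiple of a genuine multicurve, and symmetrically on the other side; this is what turns ``$A^{\pm 2}$'' data into the $\lbrace 4\rbrace$-coefficients of the statement. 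The last step is the bookkeeping: verify that, after these simplifications, the surviving resolutions organize on each side of $\gamma\cdot\delta=\delta\cdot\gamma$ into the three diagrams drawn, with coefficients $\lbrace 4\rbrace$, $\lbrace 2\rbrace$, $\lbrace 2\rbrace$, and then rearrange the commutation equality to read off $(S_0)$.

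I expect the main obstacle to be this final bookkeeping. Unlike $(S_n)$, the relation $(S_0)$ is asserted as an \emph{exact} identity, with no lower-degree error term available to absorb stray resolutions; so one cannot invoke a degree bound to discard anything, and all $2^{N}$ resolutions of $\gamma\cdot\delta$ must be identified explicitly and simplified in full. The delicate point is the interplay between the trivial-loop resolutions and the collapse $\lbrace 2\rbrace\mapsto\lbrace 4\rbrace$ described above, which has to be tracked with care so that the coefficients balance on both sides with no remainder; one must also check at the outset that the chosen pair $(\gamma,\delta)$ really yields exactly those six multicurves in $P$ and nothing else. If producing such a clean configuration turns out to be awkward, a fallback is to prove $(S_0)$ by a direct resolution-and-Reidemeister computation on arrowed diagrams in $P$, in the spirit of Proposition~\ref{prop:sphere_relation} but keeping \emph{all} resolutions rather than only the top-degree ones.
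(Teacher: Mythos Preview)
Your plan is essentially the paper's: pick simple closed curves $\gamma,\delta$ in the $5$-holed sphere, expand $\gamma\cdot\delta=\delta\cdot\gamma$ via Kauffman resolutions, and read off $(S_0)$ exactly.

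One correction to your speculated mechanism. In the paper's proof $\gamma$ and $\delta$ meet in exactly four crossings, so the coefficients $\{4\}$ arise \emph{directly} as $A^{4}-A^{-4}$ from the extreme resolutions $++++$ and $----$; they do \emph{not} come from the identity $(-A^{2}-A^{-2})\{2\}=-\{4\}$ applied to a trivial loop as you suggest. The six non-trivial odd resolutions ($++++$, $-+++$, $++-+$ and their sign-reversals $----$, $---+$, $-+--$) are precisely the six multicurves in the statement, with coefficients $\{4\},\{2\},\{2\}$ on each side. The remaining four odd resolutions ($+-++$, $+++-$, $--+-$, $+---$) each yield a trivial curve; since two of them carry coefficient $+\{2\}$ and two carry $-\{2\}$, they cancel in pairs and contribute nothing. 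So the bookkeeping is cleaner than you anticipate: no arrows are involved (so Proposition~\ref{prop:multicurves_span} is not needed), and the trivial loops cancel by symmetry rather than feeding into the $\{4\}$ terms.
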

\begin{proof}
  We get the relation looking at resolutions of $\gamma \cdot \delta,$ where
  $\gamma$ and $\delta$ are the following simple closed curves:
  \begin{center}
\begingroup%
  \makeatletter%
  \providecommand\color[2][]{%
    \errmessage{(Inkscape) Color is used for the text in Inkscape, but the package 'color.sty' is not loaded}%
    \renewcommand\color[2][]{}%
  }%
  \providecommand\transparent[1]{%
    \errmessage{(Inkscape) Transparency is used (non-zero) for the text in Inkscape, but the package 'transparent.sty' is not loaded}%
    \renewcommand\transparent[1]{}%
  }%
  \providecommand\rotatebox[2]{#2}%
  \ifx\svgwidth\undefined%
    \setlength{\unitlength}{128.97402344bp}%
    \ifx\svgscale\undefined%
      \relax%
    \else%
      \setlength{\unitlength}{\unitlength * \real{\svgscale}}%
    \fi%
  \else%
    \setlength{\unitlength}{\svgwidth}%
  \fi%
  \global\let\svgwidth\undefined%
  \global\let\svgscale\undefined%
  \makeatother%
  \begin{picture}(1,0.52148837)%
    \put(0,0){\includegraphics[width=\unitlength]{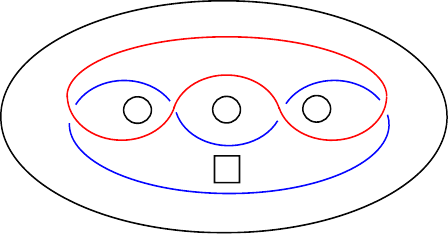}}%
    \put(0.06104652,0.46135779){\color[rgb]{0,0,0}\makebox(0,0)[lb]{\smash{$1$}}}%
    \put(0.19629149,0.25566535){\color[rgb]{0,0,0}\makebox(0,0)[lb]{\smash{$2$}}}%
    \put(0.54745393,0.26582303){\color[rgb]{0,0,0}\makebox(0,0)[lb]{\smash{$3$}}}%
    \put(0.75388044,0.26836244){\color[rgb]{0,0,0}\makebox(0,0)[lb]{\smash{$4$}}}%
    \put(0.46440878,0.46643663){\color[rgb]{1,0,0}\makebox(0,0)[lb]{\smash{$\gamma$}}}%
    \put(0.40746347,0.01696053){\color[rgb]{0,0,1}\makebox(0,0)[lb]{\smash{$\delta$}}}%
  \end{picture}%
\endgroup%

  \end{center}
  In the above, we numbered some boundary components from $1$ to $4,$ where $1$
  and $2$ correspond to the two leftmost boundary components in
  Proposition~\ref{prop:sphere_relation2} and $3$ and $4$ to the two rightmost
  components. We get the sphere relation $(S_0)$ from the equality
  $\gamma \cdot \delta=\delta \cdot \gamma.$ Note that resolutions with an
  even number of positive resolutions at crossings will appear with the same
  coefficient $1$ on both sides, so we need to analyse the other resolutions only.
  Let us order the crossings from left to right, the following sums up the
  different resolutions:
  \begin{center}
\begingroup%
  \makeatletter%
  \providecommand\color[2][]{%
    \errmessage{(Inkscape) Color is used for the text in Inkscape, but the package 'color.sty' is not loaded}%
    \renewcommand\color[2][]{}%
  }%
  \providecommand\transparent[1]{%
    \errmessage{(Inkscape) Transparency is used (non-zero) for the text in Inkscape, but the package 'transparent.sty' is not loaded}%
    \renewcommand\transparent[1]{}%
  }%
  \providecommand\rotatebox[2]{#2}%
  \ifx\svgwidth\undefined%
    \setlength{\unitlength}{385.53190918bp}%
    \ifx\svgscale\undefined%
      \relax%
    \else%
      \setlength{\unitlength}{\unitlength * \real{\svgscale}}%
    \fi%
  \else%
    \setlength{\unitlength}{\svgwidth}%
  \fi%
  \global\let\svgwidth\undefined%
  \global\let\svgscale\undefined%
  \makeatother%
  \begin{picture}(1,0.47260668)%
    \put(0,0){\includegraphics[width=\unitlength]{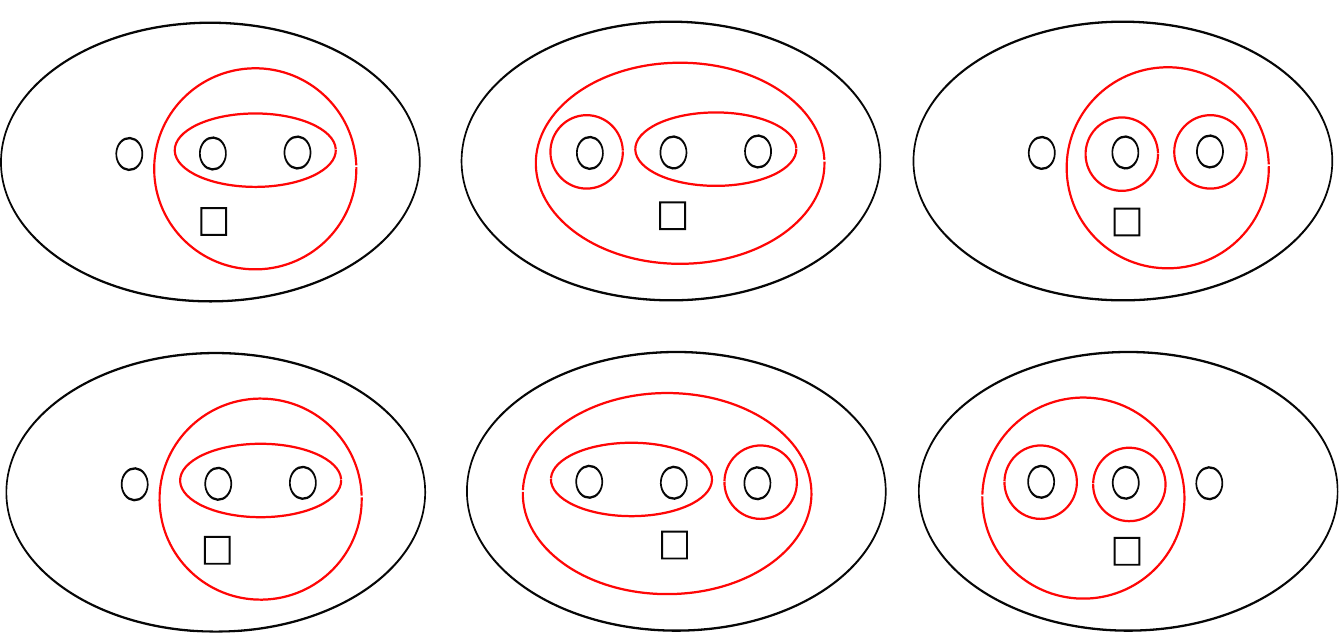}}%
    \put(0.00210741,0.46138209){\color[rgb]{0,0,0}\makebox(0,0)[lb]{\smash{$++++$}}}%
    \put(0.34615987,0.46215774){\color[rgb]{0,0,0}\makebox(0,0)[lb]{\smash{$-+++$}}}%
    \put(0.68375121,0.46215774){\color[rgb]{0,0,0}\makebox(0,0)[lb]{\smash{$++-+$}}}%
    \put(0.00611704,0.21463669){\color[rgb]{0,0,0}\makebox(0,0)[lb]{\smash{$----$}}}%
    \put(0.36550766,0.21302551){\color[rgb]{0,0,0}\makebox(0,0)[lb]{\smash{$---+$}}}%
    \put(0.7220259,0.21779921){\color[rgb]{0,0,0}\makebox(0,0)[lb]{\smash{$-+--$}}}%
  \end{picture}%
\endgroup%

  \end{center}
  with the remaining odd resolutions $+-++,$ $+++-,$ $--+-$ and $+---$ all
  yielding trivial curves. Thus collecting all terms of the equality
  $\gamma \cdot \delta=\delta\cdot \gamma$ we indeed get the relation~$(S_0).$
\end{proof} 
We now establish another useful relation which we will call the Torus relation.
This is a relation between multicurves that sit on a torus (with several holes)
subsurface of $\Sigma.$ We have:
\begin{proposition}\label{prop:torus_relation}
  Let $\gamma$ be an arrowed multicurve. We suppose that
  there exists a simple curve $\delta$ which intersects exactly once $n\geqslant 3$
  components of $\gamma$ and which is disjoint from all other curves of $\gamma$.
  Then, up to multicurves of lower degree, $\gamma$ is a linear combination of
  arrowed multicurves obtained from $\gamma$ by replacing any number of pairs
  of consecutive components of $\gamma$ along $\delta$ by their connected sum
  along the arc of $\delta$ connecting them, provided the resulting curves
  are non-trivial and separating.
\end{proposition}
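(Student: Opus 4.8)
The plan is to derive the torus relation from a single instance of the commutation relation $\alpha\cdot\beta=\beta\cdot\alpha$ in $\Sk(\Sigma\times S^1)$, chosen inside a torus-with-holes subsurface, in exactly the spirit of the proofs of Propositions~\ref{prop:sphere_relation} and~\ref{prop:sphere_relation2}. Write $\gamma_1,\dots,\gamma_n$ for the components of $\gamma$ met by $\delta$, labelled in cyclic order along $\delta$, with $p_i=\gamma_i\cap\delta$, and let $X$ be the union of the remaining components of $\gamma$. Let $N$ be a regular neighbourhood of $\delta\cup\gamma_1\cup\cdots\cup\gamma_n$: it is a genus-one surface with $n$ boundary components, and a short homological check identifies these boundary components with the $n$ consecutive connected sums $\gamma_i\#_\delta\gamma_{i+1}$. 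Every multicurve occurring in the statement is supported in $N\cup X$, so it suffices to produce the relation inside $N\times S^1$, leaving $X$ untouched.

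For the auxiliary pair I would take $\beta$ to be a parallel copy of $\delta$ and $\alpha$ to be the simple closed curve in $N$ obtained by band-summing $\gamma_1,\gamma_2,\dots,\gamma_n$ in a chain along the arcs of $\delta$ joining consecutive $p_i$, arranged so that: (i) $\alpha$ meets $\beta$ transversally, with two crossings near each of the $n-1$ bands; (ii) undoing the $i$-th band in a Kauffman resolution separates the two adjacent pieces into copies of $\gamma_i$ and $\gamma_{i+1}$; and (iii) a resolution that splits $\alpha$ into several pieces reroutes $\beta$ into a single null-homotopic curve, which can then be removed using $K_2$ (and Proposition~\ref{prop:multicurves_span} if it carries arrows). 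Pinning down $\alpha$ and $\beta$ so that (i)--(iii) hold --- the analogue of fixing the configuration in Figure~\ref{fig:sphere_resolution} --- is the first real task. Granting it, stacking $\alpha$ over $\beta$ and applying $K_1$ to all crossings writes $\alpha\cdot\beta=\sum_\lambda c_\lambda(A)\,[\lambda]$ as a $\QQ(A)$-combination of arrowed multicurves (arrows of the $\gamma_i$ are carried along intact by the smoothings); since $\beta\cdot\alpha$ resolves to the same multicurves with each $A$ replaced by $A^{-1}$, while $\alpha\cdot\beta=\beta\cdot\alpha$ in the skein module, we obtain the relation $\sum_\lambda\bigl(c_\lambda(A)-c_\lambda(A^{-1})\bigr)[\lambda]=0$.

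The heart of the matter, exactly as in Proposition~\ref{prop:sphere_relation}, is to decide which resolutions $\lambda$ reach the maximal degree. I would proceed band by band along $\alpha$, bounding the contribution of each resulting piece to $\deg(\lambda)$: at every band the local smoothings either keep $\beta$ from being cut (forcing $\alpha$ to stay a single curve, so that the resolution has degree at most $3+\deg(X)\le\deg(\gamma)$ since $n\ge3$) or cut $\alpha$ there (in which case $\beta$ is woven in and eventually absorbed). The expected conclusion is $\deg(\lambda)\le\deg(\gamma)$ for all $\lambda$, with equality precisely for the multicurves obtained from $\gamma$ by simultaneously replacing a set of pairwise non-adjacent consecutive pairs $\gamma_i,\gamma_{i+1}$ by $\gamma_i\#_\delta\gamma_{i+1}$, subject to every such connected sum being an essential separating curve of $\Sigma$; a non-separating or trivial connected sum, the merging of three or more consecutive $\gamma_i$ into one curve, a surviving nontrivial $\delta$-component, or any small circle would all strictly drop the degree. (Replacing adjacent pairs is impossible precisely because keeping two neighbouring bands fuses three of the $\gamma_i$; this is why the pairs in the statement are pairwise non-adjacent.) The multicurve $\gamma$ itself arises from the resolution undoing every band.

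Finally one reads the relation modulo lower-degree multicurves and checks that $\gamma$ occurs with an invertible coefficient: as for the $\{2n\}$ and $\{2n+2\}$ of the sphere relations, I expect $c_\emptyset(A)-c_\emptyset(A^{-1})$ to be, up to a unit, a quantum integer $\{k\}$ with $k\ne0$, which is invertible under the standing hypothesis; solving for $\gamma$ gives the statement. The main obstacle is the combination of the first and third steps: arranging $\alpha,\beta$ so that the maximal-degree resolutions correspond exactly to the admissible pair-replacements, and then carrying out the degree bookkeeping over the $2^{2(n-1)}$ resolutions --- in particular producing a clean normal form for each maximal-degree resolution and verifying that a nontrivial $\delta$-component can never survive at top degree. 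The rest is routine, of the kind already done for the sphere relations.
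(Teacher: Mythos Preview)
Your overall strategy --- derive the relation from a single commutation identity and then filter the resolutions by degree --- is exactly right, and is what the paper does. But your choice of auxiliary pair makes the bookkeeping needlessly hard, and condition~(iii) reflects a genuine misconception.

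The paper takes $\beta=\delta$ and takes for $\alpha$ the $1/n$ \emph{fractional Dehn twist} $\gamma'$ of $\gamma_1\cup\cdots\cup\gamma_n$ along $\delta$: a \emph{cyclic} reconnection of the $\gamma_i$, not a chain band-sum. This $\gamma'$ is a single simple closed curve meeting $\delta$ in exactly $n$ points, so there are only $2^n$ resolutions and the combinatorics is transparent. The $n$ crossings cut $\delta$ and $\gamma'$ into $n$ green arcs and $n$ black arcs, with both the green arc and the black arc at position $i$ joining crossing $i$ to crossing $i+1$ (cyclically). Hence every green arc lies in exactly one bigon, and that bigon is isotopic to $\gamma_i$ (non-separating, so it contributes $1$ to the degree). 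The only other way a green arc can contribute $1$ is to sit in a square $(i,i+1,i+2,i+1)$, which is precisely $\gamma_i\#_\delta\gamma_{i+1}$, and only when that curve is separating. So the top-degree resolutions are exactly $\gamma$ (the all-bigon resolution, appearing with coefficient $\pm\{n\}$) together with the multicurves obtained by replacing disjoint consecutive pairs by their separating connected sums --- and the cyclic pair $(\gamma_n,\gamma_1)$ comes out automatically.

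Your chain band-sum over $n-1$ arcs breaks this cyclic symmetry (so it is unclear how the pair $(\gamma_n,\gamma_1)$ could ever appear among the top-degree terms), doubles the number of crossings, and your condition~(iii) misdescribes Kauffman smoothing: a resolution of $\alpha\cdot\beta$ is a single multicurve in which arcs of $\alpha$ and $\beta$ have been spliced together, not ``$\alpha$ split into pieces'' alongside ``a rerouted $\beta$''. Since you already flag the construction of $(\alpha,\beta)$ and the ensuing degree count as the unresolved obstacles, the proposal is a plan rather than a proof. Replacing your pair by $(\gamma',\delta)$ as above removes all the difficulties and the argument finishes in a few lines.
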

Note that in the torus relation all multicurves are of degree~$n:$ all components of
$\gamma$ are non-separating as they intersect~$\delta$ once, and the other
multicurves have the same degree as we always replace two non-separating curves
with one separating one.
We also remark that any multicurve that satisfy the hypothesis of
Proposition~\ref{prop:torus_relation} is a linear combination of multicurves
with smaller complexity. If moreover no pair of consecutive components bounds
a subsurface $\Sigma'\subset \Sigma$ with genus $\geqslant 1,$ then~$\gamma$ is
actually a linear combination of multicurves of smaller degree, as those connected
sums are all either trivial curves or non-separating curves.
\begin{proof}
  \begin{figure}[!h]
  \centering
  \def \svgwidth{0.2\columnwidth}
  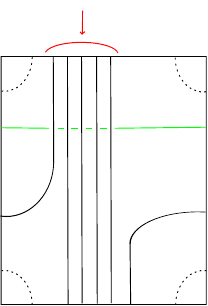
  \caption{The black curve $\gamma'$ stands atop the green curve $\delta.$ The dashed
    quarter-circles correspond to the boundary component of the torus. We labeled the
    crossings from $0$ to $n-1,$ from left to right. The surface is represented as a
    torus using the square model, but regions of the diagram may actually contain some
    genus, or be connected by handles}
  \label{fig:torus_resolution}
  \end{figure}
  We prove Proposition~\ref{prop:torus_relation} using a similar method as for
  Proposition~\ref{prop:sphere_relation}.
  Let us introduce the curve~$\gamma'$
  which is the $1/n$ fractional Dehn twist of~$\gamma$ along~$\delta.$
  The curves~$\gamma'$ and~$\delta$ are represented on Figure~\ref{fig:torus_resolution}.
  We will deduce the relation from the equality $\gamma'\cdot \delta=\delta\cdot \gamma'.$
  This time, the diagram shown in Figure \ref{fig:torus_resolution} has $n$ crossings,
  thus $n$ green arcs and $n$ black arcs. The crossings are labeled $0$ to $n-1,$ and
  considered as elements of $\Z_n.$
  Notice that the black arcs connect crossings $i$ to crossing $i+1.$
  We will find the bigons and squares of the diagram, and show that no bigon corresponds
  to a separating curve, which will imply that the maximal degree is at most $n.$
  As there are both green and black arcs connecting $i$ to $i+1,$ we get exactly $n$
  bigons, and each green arc belongs to exactly $1$ bigon. Notice that here it is
  important that $n\geqslant 3:$ otherwise there are actually two black arcs that close up
  the green arc $(0,1),$ as $1+1=0 \ \mathrm{mod} \ 2.$
  Each of those bigons corresponds to a vertical curve in the square representing the
  one-holed torus: that is, the bigons correspond to parallel non-separating curves.
  
  Let us now search for the squares $(i,i+1,j,k),$ where $(i,i+1)$ is a green arc.
  Given the form of black arcs, they must be of the form $(i,i+1,i+2,i+1).$
  We note that those squares all correspond the curves that are connected sums of
  consecutive components of~$\gamma$ along an arc of~$\delta.$
  In a maximal degree resolution, all green arcs must belong to either a bigon
  (which has to be a non-separating curve as we saw) or a square that is a
  separating curve. So the different resolutions are exactly~$\gamma$ and the
  different possible multicurves described in the first part of
  Proposition~\ref{prop:torus_relation}.
  Thus the equality $\gamma' \cdot \delta=\delta \cdot \gamma'$ gives exactly
  the equation of Proposition~\ref{prop:torus_relation}.
\end{proof}
Next we introduce a relation between arrowed multicurves in a two-holed torus
subsurface of $\Sigma.$ Those relations will involve mutlicurves with at most~$2$
components (in the subsurface), which are not covered by Proposition~\ref{prop:torus_relation}.
\begin{proposition}\label{prop:2holed_torus_relation}
  We have the relations:
  \begin{center}
\begingroup%
  \makeatletter%
  \providecommand\color[2][]{%
    \errmessage{(Inkscape) Color is used for the text in Inkscape, but the package 'color.sty' is not loaded}%
    \renewcommand\color[2][]{}%
  }%
  \providecommand\transparent[1]{%
    \errmessage{(Inkscape) Transparency is used (non-zero) for the text in Inkscape, but the package 'transparent.sty' is not loaded}%
    \renewcommand\transparent[1]{}%
  }%
  \providecommand\rotatebox[2]{#2}%
  \ifx\svgwidth\undefined%
    \setlength{\unitlength}{426.1296875bp}%
    \ifx\svgscale\undefined%
      \relax%
    \else%
      \setlength{\unitlength}{\unitlength * \real{\svgscale}}%
    \fi%
  \else%
    \setlength{\unitlength}{\svgwidth}%
  \fi%
  \global\let\svgwidth\undefined%
  \global\let\svgscale\undefined%
  \makeatother%
  \begin{picture}(1,0.15641466)%
    \put(0,0){\includegraphics[width=\unitlength]{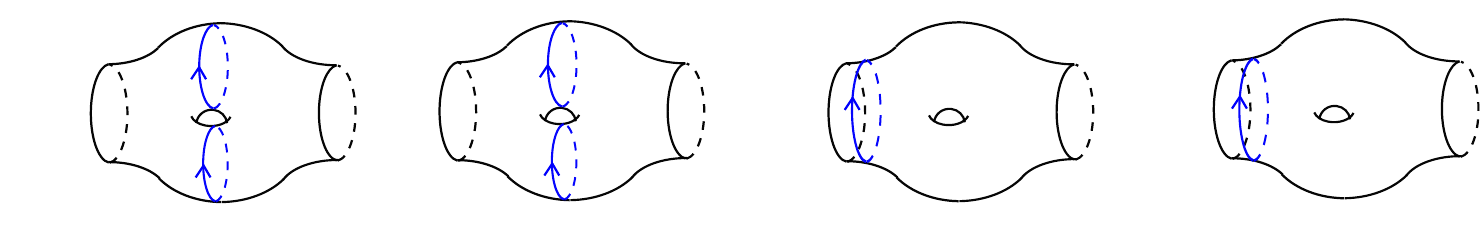}}%
    \put(0.10204911,0.10354605){\color[rgb]{0,0,1}\makebox(0,0)[lb]{\smash{$a$}}}%
    \put(0.10837054,0.04481038){\color[rgb]{0,0,1}\makebox(0,0)[lb]{\smash{$b$}}}%
    \put(0.35289732,0.14467381){\color[rgb]{0,0,1}\makebox(0,0)[lb]{\smash{$a+1$}}}%
    \put(0.36633037,0.00154736){\color[rgb]{0,0,1}\makebox(0,0)[lb]{\smash{$b+1$}}}%
    \put(0.55202734,0.13945775){\color[rgb]{0,0,1}\makebox(0,0)[lb]{\smash{$a+b$}}}%
    \put(0.79739976,0.14102731){\color[rgb]{0,0,1}\makebox(0,0)[lb]{\smash{$a+b+2$}}}%
    \put(-0.00053534,0.05929394){\color[rgb]{0,0,0}\makebox(0,0)[lb]{\smash{$A^{-1}$}}}%
    \put(0.25290862,0.05661199){\color[rgb]{0,0,0}\makebox(0,0)[lb]{\smash{$-A$}}}%
    \put(0.48891993,0.05929394){\color[rgb]{0,0,0}\makebox(0,0)[lb]{\smash{$=-A$}}}%
    \put(0.75175071,0.0606349){\color[rgb]{0,0,0}\makebox(0,0)[lb]{\smash{$+A^{-1}$}}}%
  \end{picture}%
\endgroup%

  \end{center}
  and
  \begin{center}
\begingroup%
  \makeatletter%
  \providecommand\color[2][]{%
    \errmessage{(Inkscape) Color is used for the text in Inkscape, but the package 'color.sty' is not loaded}%
    \renewcommand\color[2][]{}%
  }%
  \providecommand\transparent[1]{%
    \errmessage{(Inkscape) Transparency is used (non-zero) for the text in Inkscape, but the package 'transparent.sty' is not loaded}%
    \renewcommand\transparent[1]{}%
  }%
  \providecommand\rotatebox[2]{#2}%
  \ifx\svgwidth\undefined%
    \setlength{\unitlength}{435.19438477bp}%
    \ifx\svgscale\undefined%
      \relax%
    \else%
      \setlength{\unitlength}{\unitlength * \real{\svgscale}}%
    \fi%
  \else%
    \setlength{\unitlength}{\svgwidth}%
  \fi%
  \global\let\svgwidth\undefined%
  \global\let\svgscale\undefined%
  \makeatother%
  \begin{picture}(1,0.15315671)%
    \put(0,0){\includegraphics[width=\unitlength]{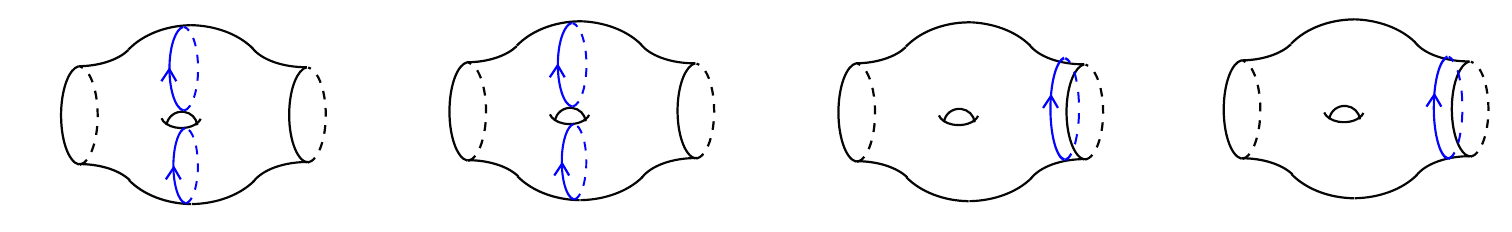}}%
    \put(0.0802279,0.10007626){\color[rgb]{0,0,1}\makebox(0,0)[lb]{\smash{$a$}}}%
    \put(0.08641766,0.04256399){\color[rgb]{0,0,1}\makebox(0,0)[lb]{\smash{$b$}}}%
    \put(0.35211203,0.14166041){\color[rgb]{0,0,1}\makebox(0,0)[lb]{\smash{$a+1$}}}%
    \put(0.36526528,0.00151513){\color[rgb]{0,0,1}\makebox(0,0)[lb]{\smash{$b+1$}}}%
    \put(0.70203326,0.1299878){\color[rgb]{0,0,1}\makebox(0,0)[lb]{\smash{$a+b+2$}}}%
    \put(0.95936444,0.13546377){\color[rgb]{0,0,1}\makebox(0,0)[lb]{\smash{$a+b$}}}%
    \put(-0.00052419,0.05674587){\color[rgb]{0,0,0}\makebox(0,0)[lb]{\smash{$A$}}}%
    \put(0.22269297,0.05674587){\color[rgb]{0,0,0}\makebox(0,0)[lb]{\smash{$-A^{-1}$}}}%
    \put(0.48530135,0.0580589){\color[rgb]{0,0,0}\makebox(0,0)[lb]{\smash{$=A$}}}%
    \put(0.74265761,0.05937193){\color[rgb]{0,0,0}\makebox(0,0)[lb]{\smash{$-A^{-1}$}}}%
  \end{picture}%
\endgroup%

  \end{center}
  In particular, the multicurves on the left hand side of two equations are linear
  combinations of multicurves of smaller complexity.
\end{proposition}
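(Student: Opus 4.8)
\section*{Proof proposal for Proposition~\ref{prop:2holed_torus_relation}}

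Both identities are \emph{exact} local relations in $\Sk(\Sigma\times S^1)$, so — unlike the sphere and torus relations — they will be obtained not from a degree count but from a single Kauffman resolution $K_1$ combined with an isotopy of $\Sigma\times S^1$, that is, with the moves of Proposition~\ref{prop:Reidemeister}. The plan is as follows. Inside the two-holed torus, realize the two parallel non-separating curves carrying $a$ and $b$ arrows as the two smoothings of a single arrowed diagram $\eta$ having exactly one self-crossing, namely the band sum of the two parallel copies along a short arc; the other smoothing of that crossing is a single simple closed curve carrying $a+b$ arrows (with the band taken through the handle of the torus, this second curve is separating in $\Sigma$). Applying $K_1$ to the self-crossing gives $[\eta]=A\,[\,\text{one-component diagram}\,]+A^{-1}[\,\text{two-component diagram}\,]$, or with $A$ and $A^{-1}$ exchanged according to which strand passes over.

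Next, using the moves $R_4$ and $R_5$ one rewrites the \emph{same} skein element in a second way: there is a diagram $\eta'$, isotopic to $\eta$ in $\Sigma\times S^1$, whose unique self-crossing is of the opposite type and on which each of the two strands through the crossing has acquired one extra arrow — of which, by $R_4$, only the algebraic count matters. Resolving the self-crossing of $\eta'$ by $K_1$ therefore produces the two-component diagram with $a+1$ and $b+1$ arrows together with the one-component diagram carrying $a+b+2$ arrows, now with the coefficients $A$ and $A^{-1}$ interchanged relative to the resolution of $\eta$. Equating the two expressions for $[\eta]=[\eta']$ and collecting coefficients yields the first displayed relation. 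The second relation is obtained in exactly the same way starting from the diagram with the other over/under convention at the self-crossing (the mirror configuration), which amounts to substituting $A\mapsto A^{-1}$ and interchanging the two one-component terms, and this is precisely the second displayed identity.

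For the last assertion, read the two relations as a linear system in the two two-component multicurves $X$ (with $a,b$ arrows) and $Y$ (with $a+1,b+1$ arrows); its coefficient matrix is $\left(\begin{smallmatrix} A^{-1} & -A\\ A & -A^{-1}\end{smallmatrix}\right)$, of determinant $A^{2}-A^{-2}=\lbrace 2\rbrace$, which is invertible in $\QQ(A)$ — and more generally whenever $\lbrace 2\rbrace$ is invertible, as allowed by the standing hypothesis of the paper. Hence $X$ and $Y$ are $\QQ(A)$-linear combinations of the one-component diagrams with $a+b$ and $a+b+2$ arrows, whose complexity is strictly smaller (one fewer essential component, with degree unchanged or smaller). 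The only genuinely delicate point in this scheme is the bookkeeping under the rewriting $\eta\rightsquigarrow\eta'$: one must check carefully, via $R_4$ and $R_5$, that the modification adds exactly one arrow to each of the two strands at the crossing and turns the crossing into one of the opposite type, and that the relevant band sums inside the two-holed torus produce the curve types drawn in the figures, so that the complexity really drops. Everything else — a single $K_1$ resolution and the inversion of a $2\times2$ matrix — is routine.
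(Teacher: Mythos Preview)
Your approach is essentially the same as the paper's. The paper displays a single $R_5$ move identifying a crossed diagram carrying arrows $a,b$ with one carrying $a+1,b+1$, resolves both sides by $K_1$ to obtain the first identity (and likewise for the second), and then solves the resulting $2\times 2$ system in $D_{a,b},D_{a+1,b+1}$ with determinant $\lbrace 2\rbrace$ to deduce the complexity drop---exactly the mechanism and the linear-algebra step you describe.
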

\begin{proof}
  From Reidemeister moves $R_5$, we have:
  \begin{center}
\begingroup%
  \makeatletter%
  \providecommand\color[2][]{%
    \errmessage{(Inkscape) Color is used for the text in Inkscape, but the package 'color.sty' is not loaded}%
    \renewcommand\color[2][]{}%
  }%
  \providecommand\transparent[1]{%
    \errmessage{(Inkscape) Transparency is used (non-zero) for the text in Inkscape, but the package 'transparent.sty' is not loaded}%
    \renewcommand\transparent[1]{}%
  }%
  \providecommand\rotatebox[2]{#2}%
  \ifx\svgwidth\undefined%
    \setlength{\unitlength}{226.06918945bp}%
    \ifx\svgscale\undefined%
      \relax%
    \else%
      \setlength{\unitlength}{\unitlength * \real{\svgscale}}%
    \fi%
  \else%
    \setlength{\unitlength}{\svgwidth}%
  \fi%
  \global\let\svgwidth\undefined%
  \global\let\svgscale\undefined%
  \makeatother%
  \begin{picture}(1,0.4094522)%
    \put(0,0){\includegraphics[width=\unitlength]{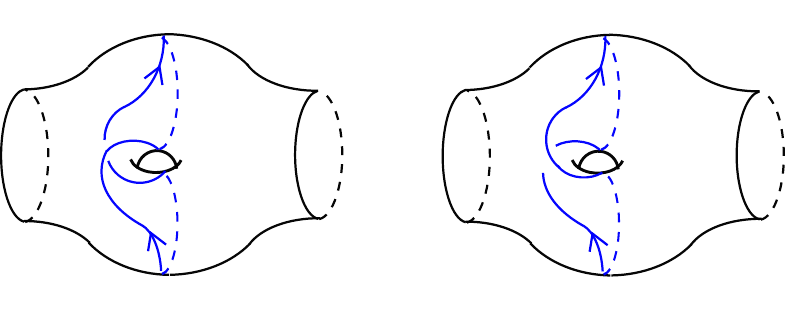}}%
    \put(0.14131409,0.37968677){\color[rgb]{0,0,1}\makebox(0,0)[lb]{\smash{$a$}}}%
    \put(0.17901342,0.00468252){\color[rgb]{0,0,1}\makebox(0,0)[lb]{\smash{$b$}}}%
    \put(0.48552749,0.18159722){\color[rgb]{0,0,0}\makebox(0,0)[lb]{\smash{$=$}}}%
    \put(0.70384416,0.37892711){\color[rgb]{0,0,1}\makebox(0,0)[lb]{\smash{$a+1$}}}%
    \put(0.74154349,0.00392286){\color[rgb]{0,0,1}\makebox(0,0)[lb]{\smash{$b+1$}}}%
  \end{picture}%
\endgroup%

  \end{center}
  After resolving the crossings, this gives the first equation. Similarly, the
  second equation is a consequence of the relation:
  \begin{center}
\begingroup%
  \makeatletter%
  \providecommand\color[2][]{%
    \errmessage{(Inkscape) Color is used for the text in Inkscape, but the package 'color.sty' is not loaded}%
    \renewcommand\color[2][]{}%
  }%
  \providecommand\transparent[1]{%
    \errmessage{(Inkscape) Transparency is used (non-zero) for the text in Inkscape, but the package 'transparent.sty' is not loaded}%
    \renewcommand\transparent[1]{}%
  }%
  \providecommand\rotatebox[2]{#2}%
  \ifx\svgwidth\undefined%
    \setlength{\unitlength}{226.06918945bp}%
    \ifx\svgscale\undefined%
      \relax%
    \else%
      \setlength{\unitlength}{\unitlength * \real{\svgscale}}%
    \fi%
  \else%
    \setlength{\unitlength}{\svgwidth}%
  \fi%
  \global\let\svgwidth\undefined%
  \global\let\svgscale\undefined%
  \makeatother%
  \begin{picture}(1,0.40945225)%
    \put(0,0){\includegraphics[width=\unitlength]{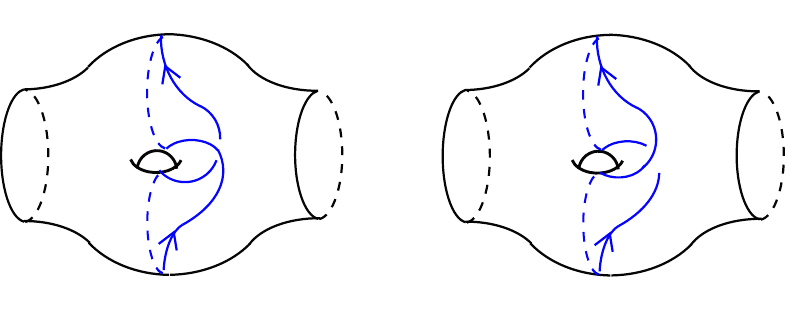}}%
    \put(0.14131408,0.37968684){\color[rgb]{0,0,1}\makebox(0,0)[lb]{\smash{$a$}}}%
    \put(0.17901341,0.00468265){\color[rgb]{0,0,1}\makebox(0,0)[lb]{\smash{$b$}}}%
    \put(0.48552754,0.1815973){\color[rgb]{0,0,0}\makebox(0,0)[lb]{\smash{$=$}}}%
    \put(0.70384414,0.37892719){\color[rgb]{0,0,1}\makebox(0,0)[lb]{\smash{$a+1$}}}%
    \put(0.74154348,0.00392288){\color[rgb]{0,0,1}\makebox(0,0)[lb]{\smash{$b+1$}}}%
  \end{picture}%
\endgroup%

  \end{center}
  Finally, let us call $D_{a,b},$ $D_{a+1,b+1}$ the two multicurves on the left
  hand of the two equations, let $\equiv$ be equality modulo multicurves of
  smaller complexity. Note that $D_{a,b}$ is composed of two non-separating
  curves, so it has degree $2.$ The multicurves on the right-hand side have
  only one component, thus have degree at most $2$ and smaller complexity
  than $D_{a,b}.$
  Thus we have $A^{-1}D_{a,b}-A D_{a+1,b+1}\equiv 0$ and
  $A D_{a,b}-A^{-1}D_{a+1,b+1}\equiv 0,$ which implies that
  $\lbrace 2 \rbrace D_{a,b}\equiv 0$ and $\lbrace 2 \rbrace D_{a+1,b+1}\equiv 0.$
  So the multicurves on the left hand side are linear combinations of multicurves
  of smaller complexity. 
\end{proof}

\subsection{The dual graph of an arrowed multicurve}
\label{sec:dual_graph}
Given an (arrowed) multicurve in $\Sigma$ we define its dual tree as follows:
\begin{definition}
  Let $\gamma$ be a multicurve in $\Sigma.$ Let $c$ be the multicurve that
  consists of copy of each distinct homotopy classes of non-trivial separating
  curves among components of~$\gamma,$ and let~$V$ be the set of connected
  components of $\Sigma \setminus c.$ Then the graph~$\Gamma$ dual to~$\gamma$
  has one vertex for each element of~$V$ and one edge for each component of~$c,$
  connecting the two connected components of $\Sigma \setminus c$ that it bounds.
  For each vertex $v\in V$, we associate the corresponding
  connected component $\Sigma(v)\subset\Sigma$. We let $g(v)$ denote its genus,
  and by $\gamma\cap\Sigma(v)$ we mean the arrowed diagram of $\Sigma(v)$
  consisting of the non-separating connected components of $\gamma$ lying
  inside $\Sigma(v)$.
\end{definition}
Note that the graph $\Gamma$ is actually a tree as any edge of $\Gamma$ is disconnecting.

For $\gamma \in \Sk(\Sigma \times S^1)$ a multicurve, we say that~$\gamma$ is
\textit{stable} if it is not a linear combination of multicurves of smaller complexity.
\begin{lemma}\label{lemma:vertex}
  Let $\gamma$ be a stable multicurve and let $v$ be a vertex of the dual graph of~$\gamma.$
  Then $\gamma \cap \Sigma(v)$ consists of either $0$ or $1$ non-separating curve.
\end{lemma}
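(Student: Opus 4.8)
The plan is to argue by contradiction: suppose $\gamma$ is stable but $\gamma\cap\Sigma(v)$ contains at least $2$ non-separating components, say $c_1$ and $c_2$, both lying in the same piece $\Sigma(v)$ of the decomposition. The strategy is to produce, inside $\Sigma(v)$, an auxiliary simple closed curve $\delta$ witnessing one of the relations from Section~\ref{sec:relations} — the torus relation (Proposition~\ref{prop:torus_relation}), its two-holed-torus companion (Proposition~\ref{prop:2holed_torus_relation}), or one of the sphere relations (Propositions~\ref{prop:sphere_relation} and~\ref{prop:sphere_relation2}) — and thereby express $\gamma$ as a linear combination of multicurves of strictly smaller complexity, contradicting stability. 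The key geometric input is a change-of-coordinates principle for surfaces: since $c_1,c_2$ are non-separating in the connected subsurface $\Sigma(v)$ (which has genus $g(v)\geqslant 1$ and at least the boundary components coming from the edges of $\Gamma$ at $v$), one can control how the $c_i$ sit relative to each other and relative to $\bdy\Sigma(v)$.

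First I would split into cases according to the topological type of the pair $\{c_1,c_2\}$ inside $\Sigma(v)$. \emph{Case 1: $c_1\cup c_2$ is non-separating in $\Sigma(v)$ and the $c_i$ are non-isotopic.} Then there is a simple closed curve $\delta\subset\Sigma(v)$ meeting $c_1$ once, meeting $c_2$ once, and disjoint from all other components of $\gamma$; if $\gamma\cap\Sigma(v)$ has $n\geqslant 3$ such components one invokes Proposition~\ref{prop:torus_relation} directly, while if it has exactly $2$ one invokes Proposition~\ref{prop:2holed_torus_relation}, the latter giving $\{2\}D_{a,b}\equiv 0$ hence (since $\{2\}$ is invertible) the desired reduction. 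In both sub-cases one must check that the connected sums produced by the relation are non-trivial separating curves, which holds because $c_1\cup c_2$ is non-separating in $\Sigma(v)$ so no pair of consecutive components along $\delta$ bounds a disk, and because $\delta$ can be chosen inside $\Sigma(v)$ so the connected sums stay non-trivial in $\Sigma$; the only way a connected sum could fail to be separating is excluded by the local picture along $\delta$. \emph{Case 2: $c_1$ and $c_2$ are isotopic in $\Sigma(v)$.} Then they cobound an annulus (possibly containing other parallel components), and one produces the relevant sphere relation $(S_n)$ or $(S_0)$ by finding a $5$-holed sphere subsurface of $\Sigma(v)$ containing the relevant parallel curves — this is exactly the configuration analyzed in Propositions~\ref{prop:sphere_relation} and~\ref{prop:sphere_relation2}, where the left boundary components are non-separating in $\Sigma$; the relation has coefficient $\{2n+2\}$ or $\{4\}$ on $\gamma$, again invertible, so $\gamma$ drops in complexity. \emph{Case 3: $c_1\cup c_2$ separates $\Sigma(v)$ but each $c_i$ individually does not.} Here $c_1\cup c_2$ is a separating pair, and one reduces to a sphere configuration or appeals to the two-holed torus relation as in Case~1, after possibly using that one side of $c_1\cup c_2$ must have positive genus (else $c_1,c_2$ would be isotopic, back to Case~2).

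The main obstacle I anticipate is the bookkeeping in \emph{Case 3} and, more generally, verifying in each case that the auxiliary curve $\delta$ (or the $5$-holed sphere) can be chosen \emph{disjoint from every other component of $\gamma$}, including the separating components that define the edges of $\Gamma$ — this is what keeps the reduction ``local to $\Sigma(v)$'' and ensures the lower-complexity terms really are lower-complexity in $\Sigma$. This requires a careful application of the change-of-coordinates principle relative to the boundary $\bdy\Sigma(v)$ and the other curves of $\gamma\cap\Sigma(v)$. A secondary subtlety is confirming, in Case~1 with $n=2$, that the two equations of Proposition~\ref{prop:2holed_torus_relation} genuinely apply — i.e.\ that after choosing $\delta$ the local picture near $\delta$ matches the figure there (two strands crossing a common arc), which is automatic once $\delta$ meets each of $c_1,c_2$ exactly once. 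Modulo these verifications, the contradiction with stability is immediate in every case, so $\gamma\cap\Sigma(v)$ has at most one non-separating component.
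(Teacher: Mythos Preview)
Your contradiction strategy is right, but the case split has real gaps. The worst is Case~2: the sphere relations $(S_n)$ and $(S_0)$ apply to multicurves containing several \emph{separating} components in $\Sigma$---the middle and rightmost red curves in Propositions~\ref{prop:sphere_relation} and~\ref{prop:sphere_relation2} are all required to be essential separating curves of $\Sigma$ and must be part of the multicurve. But by construction $\gamma\cap\Sigma(v)$ has \emph{no} separating components (those were used to cut out $\Sigma(v)$), so if $c_1,c_2$ are two isotopic non-separating curves sitting in $\Sigma(v)$ there is no $5$-holed-sphere configuration of the required shape inside $\Sigma(v)$, and the sphere relations simply do not apply to $\gamma$. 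Case~3 you yourself identify as the main obstacle and leave as a sketch. There is also a slip in Case~1: you build $\delta$ meeting exactly $c_1,c_2$ and disjoint from all other components, then invoke Proposition~\ref{prop:torus_relation}, which needs $\delta$ to meet $n\geqslant 3$ components---so the torus relation is unavailable with that $\delta$, and you have not explained how to find a $\delta$ hitting more components.

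The paper replaces the whole case analysis with one construction. Form the ordinary dual graph $G(v)$ of $\gamma'=\gamma\cap\Sigma(v)$ inside $\Sigma(v)$: vertices are components of $\Sigma(v)\smallsetminus\gamma'$, edges are components of $\gamma'$. Each component of $\gamma'$ is non-separating in $\Sigma(v)$ (since the pieces of $\Sigma$ outside $\Sigma(v)$ are attached along separating curves, a curve separating $\Sigma(v)$ would separate $\Sigma$), hence $G(v)$ has no bridge. If $G(v)$ has at least two edges it therefore contains an embedded cycle (or, when $G(v)$ has a single vertex, two loops based there); tracing it yields a simple closed $\delta\subset\Sigma(v)$ meeting some $k\geqslant 2$ components of $\gamma'$ exactly once each and disjoint from the rest of $\gamma$. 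Now apply Proposition~\ref{prop:2holed_torus_relation} if $k=2$ and Proposition~\ref{prop:torus_relation} if $k\geqslant 3$. This covers your Cases~1,~2,~3 uniformly---in particular, two isotopic non-separating curves give a length-two cycle in $G(v)$ and hence a $\delta$ meeting each once, so Proposition~\ref{prop:2holed_torus_relation} handles your Case~2 without any appeal to the sphere relations.
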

\begin{proof}
  Let $\gamma'$ be the multicurve $\gamma \cap \Sigma(v),$ which consists only
  of non-separating curves. By the Torus relation, if we can find a curve~$\delta$
  which intersects at least $3$ components of $\gamma'$ exactly once, then~$\gamma$
  is a linear combination of multicurves of smaller complexity, hence $\gamma$ is not stable.
  Also, if we can find a curve~$\delta$ which intersects
  exactly two components of~$\gamma'$ once, then a neighborhood of the union
  of these three curves is a two-holed torus. Proposition~\ref{prop:2holed_torus_relation}
  then asserts that~$\gamma$ is not stable.
  
  Thus, we just need to prove that such a curve~$\delta$
  exists provided~$\gamma'$ contains at least two non-separating curves.
  For this, consider the dual graph $G(v)$ to~$\gamma'$ in the
  usual sense: its vertices are the connected components of
  $\Sigma(v)\smallsetminus\gamma'$ and each component of~$\gamma'$ yields one
  edge of $G(v)$. If $G(v)$ has at least two vertices then we can find
  an embedded loop in $G(v)$, which can be followed to define a curve~$\delta$
  as above. Similarly, if $G(v)$ has only one vertex, but at least two loops,
  then the two corresponding components of~$\gamma'$ are non-separating, and
  mutually non-separating in $\Sigma(v)$, hence we can find a curve~$\delta$
  intersecting just these two curves once: provided $G(v)$ has at least two edges, such a
  curve~$\delta$ exists.
\end{proof}
\begin{proposition}\label{prop:dual_graph}
  Let~$\gamma$ be a stable multicurve on~$\Sigma.$ Then the dual graph
  of~$\gamma$ is linear.
\end{proposition}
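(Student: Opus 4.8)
The plan is to show that the dual tree $\Gamma$ of a stable multicurve has maximum degree $\leqslant 2$, which for a tree is equivalent to being linear (a path). Suppose for contradiction that some vertex $v$ of $\Gamma$ has degree $\geqslant 3$, i.e.\ there are at least three distinct homotopy classes of separating curves among the components of $\gamma$ that bound the region $\Sigma(v)$. I would first set up the picture: inside $\Sigma(v)$ we see $k\geqslant 3$ boundary curves $c_1,\dots,c_k$ coming from these separating components of $\gamma$, together with the (at most one, by Lemma~\ref{lemma:vertex}) non-separating curve of $\gamma$ lying in $\Sigma(v)$. Each $c_i$ is an essential separating curve of $\Sigma$, so on the ``other side'' of $c_i$ the surface has positive genus; in particular each $c_i$ is essential and separating in $\Sigma$.

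The key step is to find, inside the neighborhood of $c_1\cup c_2\cup c_3$ (together with a little of $\Sigma(v)$ joining them), a configuration to which the sphere relation $(S_n)$ or $(S_0)$ of Proposition~\ref{prop:sphere_relation}/\ref{prop:sphere_relation2} applies, and thereby express $\gamma$ as a linear combination of multicurves of strictly smaller complexity — contradicting stability. Concretely: since $c_1,c_2,c_3$ are three disjoint separating curves all bounding $\Sigma(v)$ on one side, a regular neighborhood $N$ of $c_1\cup c_2\cup c_3$ inside $\Sigma(v)$, suitably enlarged, is a $5$-holed sphere (or more precisely: take the subsurface of $\Sigma$ bounded by $c_1$, $c_2$, $c_3$ on one side and consisting of a planar piece of $\Sigma(v)$; with three inner holes and, after capping appropriately, this is exactly the $5$-holed-sphere pattern appearing in the sphere relations once one records the separating curves $c_1,c_2,c_3$ of $\gamma$). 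The point is that the left-hand sides of the sphere relations feature precisely a chain of parallel separating curves with two extra curves at the ends, and here $c_1,c_2,c_3$ supply such a chain while the pieces of $\Sigma$ they cut off supply the ``black square'' data. Applying $(S_n)$ with the appropriate $n$ (determined by how many of the $c_i$'s are mutually parallel versus nested — here I'd want to reduce to the case $n=0$ or $n=1$ by first using the already-established reductions) rewrites $\gamma$, modulo lower-degree terms, as a combination of multicurves each of which has replaced some of the $c_i$ by connected sums or by non-separating pairs, strictly decreasing $n+m$ and hence the complexity.

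The main obstacle will be organizing the case analysis for how the three separating curves $c_1,c_2,c_3$ sit relative to one another and relative to the (possible) non-separating curve in $\Sigma(v)$, and checking that in every case one genuinely lands in the hypotheses of one of Propositions~\ref{prop:sphere_relation}, \ref{prop:sphere_relation2}, or — if $\Sigma(v)$ has genus — of the Torus relation (Proposition~\ref{prop:torus_relation}) or the two-holed torus relation (Proposition~\ref{prop:2holed_torus_relation}). I expect the cleanest route is: (i) use Lemma~\ref{lemma:vertex} to know $\gamma\cap\Sigma(v)$ is $0$ or $1$ curve, so $\Sigma(v)$ contributes at most trivially to the complexity bookkeeping; (ii) pick three edges $e_1,e_2,e_3$ at $v$, with boundary curves $c_1,c_2,c_3$; (iii) observe that $c_1\cup c_2\cup c_3$ cobounds a planar subsurface inside $\Sigma(v)$ with two further boundary components (coming either from a genus handle of $\Sigma(v)$ or from grouping the remaining edges/curve), realizing the $5$-holed sphere; (iv) invoke the sphere relation to reduce complexity, contradicting stability. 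Hence no vertex has degree $\geqslant 3$, and since $\Gamma$ is a tree (already noted), it must be linear.
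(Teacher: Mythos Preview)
Your step (iii) has a real gap. The sphere relations $(S_n)$ and $(S_0)$ live on a $5$-holed sphere whose two leftmost boundary components are \emph{non-separating} in $\Sigma$, and whose middle red curves are \emph{parallel} copies of one separating class. At a vertex $v$ of valency $\geqslant 3$ the piece $\Sigma(v)$ may have genus $0$ --- the implication ``valency $\leqslant 2 \Rightarrow g(v)\geqslant 1$'' goes only one way --- and then no handle in $\Sigma(v)$ can supply those non-separating curves; ``grouping the remaining edges'' does not help either, since every simple closed curve contained in a planar $\Sigma(v)$ is separating in $\Sigma$. Your $c_1,c_2,c_3$ are moreover three \emph{distinct} homotopy classes, so they cannot serve as the parallel middle chain of $(S_n)$. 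A smaller but related issue: the sphere relation does not lower complexity at each step. Trading the $n$-curve term for the $(n{-}1)$-curve term via $(S_{n-1})$ adds copies of the two non-separating left boundaries, so complexity goes up while degree stays fixed; only the terminal use of $(S_0)$ drops the degree.

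The paper avoids both problems by not working at the high-valency vertex directly. It runs an induction from the leaves: any leaf has genus $\geqslant 1$, and the claim actually proved is that if $v'$ is adjacent to $v$ with $g(v')\geqslant 1$, then $v$ has valency $\leqslant 2$. The genus of $\Sigma(v')$ furnishes the two non-separating boundaries of the $5$-holed sphere, the parallel copies of the separating curve on the edge $(v,v')$ form the middle chain, and the rest of the picture sits on the $v$ side. One reduces the number of middle curves with $(S_{n-1})$ (raising complexity, fixing degree) down to $n=1$, then applies $(S_0)$ to strictly lower the degree, contradicting stability. Since valency $\leqslant 2$ forces genus $\geqslant 1$, the constraint propagates across the tree and $\Gamma$ is linear.
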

\begin{proof}
  Let $\gamma$ be a stable multicurve and~$\Gamma$ be its dual graph.
  Let us note that by definition of dual graphs, if a vertex~$v$ of~$\Gamma$
  has valency $\leqslant 2$ then its genus $g(v)$ is at least~$1.$
  Hence, it is then sufficient to prove that if two
  vertices~$v$ and~$v'$ are connected by an edge and if $g(v')\geqslant 1,$ then
  the valency of $v$ is at most $2.$ Indeed, assuming this claim, starting at
  the leaves of the tree~$\Gamma$ which have genus $\geqslant 1,$
  then neighboring vertices also have valency $\leqslant 2$ and thus genus $\geqslant 1.$
  Once can proceed inductively to prove that~$\Gamma$ is linear.
  
  Thus let us assume that $\Gamma$ has two connected vertices $v$ and $v'$ such
  that $g(v)\geqslant 1$ and suppose the valency of $v'$ is at least $2.$
  A neighborhood of the connected components of~$\gamma$ corresponding to the edge
  $(v,v')$ looks like the second term of the sphere relation of
  Proposition~\ref{prop:sphere_relation}, with $n\geqslant 1$ curves in the middle.
  
  If $n\geqslant 2,$ we have apply the sphere relation $(S_{n-1}),$ reducing the
  value of~$n$ while adding copies of the two leftmost boundary components.
  Doing so we actually increase complexity, but we keep the same degree.
  We inductively reduce the value of~$n$ until we hit~$n=1.$
  Finally we apply the relation $(S_0)$ of Proposition~\ref{prop:sphere_relation2},
  which now expresses the multicurve we got in terms of multicurves of smaller
  degree. We deduce that the multicurve~$\gamma$ was not stable.
\end{proof}
We end this section with a lemma that refines Lemma~\ref{lemma:vertex} for
vertices of valency~$2:$
\begin{lemma}\label{lemma:valency2}
  Let $\gamma$ be a stable multicurve and~$v$ be a vertex of its dual graph~$\Gamma$
  of valency~$2.$ Then $\gamma \cap \Sigma(v)=\emptyset.$
\end{lemma}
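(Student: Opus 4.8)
The plan is to rule out the remaining case permitted by Lemma~\ref{lemma:vertex}, namely that $\gamma\cap\Sigma(v)$ consists of a single non-separating curve, by showing that such a curve would make $\gamma$ unstable --- an argument in the same spirit as the proof of Proposition~\ref{prop:dual_graph}.

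Suppose, for contradiction, that $\gamma\cap\Sigma(v)$ is a single simple closed curve~$c$, non-separating in~$\Sigma$. I would first make two preliminary observations. First, $c$ is non-separating \emph{as a curve of the subsurface $\Sigma(v)$}: since the two boundary curves of $\Sigma(v)$ are separating in~$\Sigma$ and the dual graph~$\Gamma$ is a tree, a curve separating $\Sigma(v)$ would separate~$\Sigma$ as well, contradicting that~$c$ is a non-separating component of~$\gamma$; in particular $\Sigma(v)\setminus c$ is connected. Second, since~$v$ has valency~$2$, the surface $\Sigma(v)$ has exactly two boundary curves $\partial_1,\partial_2$, which are essential separating curves of~$\Sigma$ occurring among the components of~$\gamma$, say with $n_1,n_2\geqslant 1$ parallel copies respectively; and $g(v)\geqslant 1$, as recorded in the proof of Proposition~\ref{prop:dual_graph}.

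The core of the argument is to realize a sphere relation near the $n_1$ parallel copies of~$\partial_1$. Using that~$c$ is non-separating in $\Sigma(v)$, I would build a five-holed sphere subsurface $W\subset\Sigma$ containing these $n_1$ copies in its interior, arranged so that two boundary curves of~$W$ are parallel to~$c$ --- hence non-separating in~$\Sigma$ --- the copies of~$\partial_1$ appear as the parallel ``middle'' curves of the relation~$(S_{n_1})$, and the strand of~$c$ running through~$W$ occupies the region drawn as the (possibly nontrivial) black square. One then proceeds exactly as in the proof of Proposition~\ref{prop:dual_graph}: apply the relation $(S_{n_1-1})$, which trades a copy of~$\partial_1$ for two extra parallel non-separating curves while keeping the degree unchanged; iterate with $(S_{n_1-2}),\dots,(S_1)$ until a single copy of~$\partial_1$ is left; and finish with $(S_0)$ (Proposition~\ref{prop:sphere_relation2}), which rewrites the result as a combination of multicurves of strictly smaller degree. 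Thus~$\gamma$ is a linear combination of multicurves of strictly smaller degree, hence of strictly smaller complexity, contradicting stability; so $\gamma\cap\Sigma(v)=\emptyset$.

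The point needing the most care is the construction of~$W$: one must exhibit it as a genuine five-holed sphere with the two prescribed non-separating boundary curves, and check that no homotopically trivial curve sneaks into the intermediate configurations (which would invalidate the degree bookkeeping). I expect this to reduce to a short case analysis on how~$c$ lies inside $\Sigma(v)$ relative to a neighborhood of~$\partial_1$: if $g(v)\geqslant 2$ there is room to perform the construction inside $\Sigma(v)$, whereas if $g(v)=1$ one must allow~$W$ to reach slightly beyond~$\partial_1$, which is harmless since there is a nontrivial subsurface of~$\Sigma$ on the far side of~$\partial_1$.
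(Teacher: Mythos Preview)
Your approach has a genuine gap. In the sphere relations $(S_n)$, both multicurves carry two curves parallel to the ``rightmost'' boundary components of the five-holed sphere, and these must be essential separating curves of~$\Sigma$ that actually belong to the multicurve. In the proof of Proposition~\ref{prop:dual_graph} these two curves are supplied precisely by the valency~$\geqslant 3$ hypothesis: two other edges of the dual graph at the high-valency vertex give two non-isotopic separating components of~$\gamma$. Here~$v$ has valency exactly~$2$, so on the far side of~$\partial_1$ the only separating curves of~$\gamma$ you can reach are parallel copies of at most one further edge of~$\Gamma$; you never get two non-isotopic ones, and if that neighbour is a leaf you get none at all. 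Moreover, neither term of $(S_n)$ contains exactly one non-separating curve --- the first has none, the second has two --- so your configuration with the single curve~$c$ does not match either side of the relation. Your description of~$W$ is also internally inconsistent: if two of its boundary components are parallel to~$c$, then~$c$ sits in an annulus between them, not as a ``strand running through'' the black-square region. The case analysis you anticipate at the end does not address either of these obstructions.

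The paper takes an entirely different route. It produces a bespoke commutation relation directly inside $\Sigma(v)\cong\Sigma_{g(v),2}$ by drawing two specific simple closed curves that meet in exactly three points, and checks the eight resolutions of their product by hand. Exactly one resolution --- consisting of the two boundary curves together with a single non-separating curve --- has degree~$5$, while all the others have degree at most~$3$. Since this top-degree resolution appears in $\gamma\cdot\delta-\delta\cdot\gamma$ with nonzero coefficient $\lbrace 1\rbrace$, it is a linear combination of strictly lower-degree multicurves. This is a new local relation tailored to the two-boundary situation, not a reduction to the $(S_n)$ machinery.
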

\begin{proof}
  Because of Lemma~\ref{lemma:vertex}, we know that $\gamma \cap \Sigma(v)$ is
  either empty or a single non-separating curve.
  The subsurface $\Sigma(v)$ has genus $g\geqslant 1$ and $2$ boundary components.
  We will provide a relation to show that a multicurve in $\Sigma(v)$ consisting
  of the two boundary components and one non-separating curve is a linear combination
  of multicurves of smaller degree.
  
  Consider two curves $\gamma$ and $\delta$ in a surface $\Sigma_{g,2}$ as follows:
  \begin{center}
\begingroup%
  \makeatletter%
  \providecommand\color[2][]{%
    \errmessage{(Inkscape) Color is used for the text in Inkscape, but the package 'color.sty' is not loaded}%
    \renewcommand\color[2][]{}%
  }%
  \providecommand\transparent[1]{%
    \errmessage{(Inkscape) Transparency is used (non-zero) for the text in Inkscape, but the package 'transparent.sty' is not loaded}%
    \renewcommand\transparent[1]{}%
  }%
  \providecommand\rotatebox[2]{#2}%
  \ifx\svgwidth\undefined%
    \setlength{\unitlength}{122.0293335bp}%
    \ifx\svgscale\undefined%
      \relax%
    \else%
      \setlength{\unitlength}{\unitlength * \real{\svgscale}}%
    \fi%
  \else%
    \setlength{\unitlength}{\svgwidth}%
  \fi%
  \global\let\svgwidth\undefined%
  \global\let\svgscale\undefined%
  \makeatother%
  \begin{picture}(1,0.93792396)%
    \put(0,0){\includegraphics[width=\unitlength]{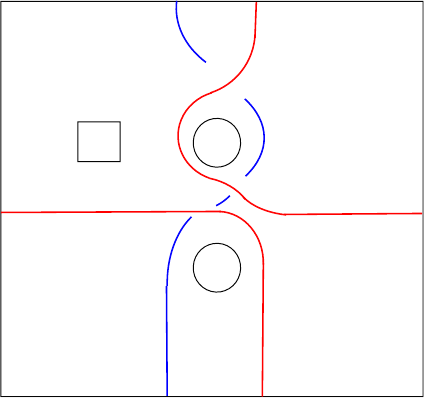}}%
    \put(0.61914762,0.84528641){\color[rgb]{1,0,0}\makebox(0,0)[lb]{\smash{$\gamma$}}}%
    \put(0.66340641,0.57275628){\color[rgb]{0,0,1}\makebox(0,0)[lb]{\smash{$\delta$}}}%
  \end{picture}%
\endgroup%

  \end{center}
  In the above, the two circles are the two boundary components of $\Sigma_{g,2},$ and
  the opposite sides of the big square are identified. Moreover, we attach $g-1$
  handles to the little square, so that the surface indeed has genus~$g.$
  
  Let us order the crossings from bottom to top.
  We claim that the resolution~$++-$
  consists of the two boundary components plus one non-separating curve, which gives
  total degree~$5,$ and that all other resolutions have degree at most~$3.$
  The different resolutions are summed up in the following diagrams:
  \begin{center}
\begingroup%
  \makeatletter%
  \providecommand\color[2][]{%
    \errmessage{(Inkscape) Color is used for the text in Inkscape, but the package 'color.sty' is not loaded}%
    \renewcommand\color[2][]{}%
  }%
  \providecommand\transparent[1]{%
    \errmessage{(Inkscape) Transparency is used (non-zero) for the text in Inkscape, but the package 'transparent.sty' is not loaded}%
    \renewcommand\transparent[1]{}%
  }%
  \providecommand\rotatebox[2]{#2}%
  \ifx\svgwidth\undefined%
    \setlength{\unitlength}{396.06928711bp}%
    \ifx\svgscale\undefined%
      \relax%
    \else%
      \setlength{\unitlength}{\unitlength * \real{\svgscale}}%
    \fi%
  \else%
    \setlength{\unitlength}{\svgwidth}%
  \fi%
  \global\let\svgwidth\undefined%
  \global\let\svgscale\undefined%
  \makeatother%
  \begin{picture}(1,0.52465333)%
    \put(0,0){\includegraphics[width=\unitlength]{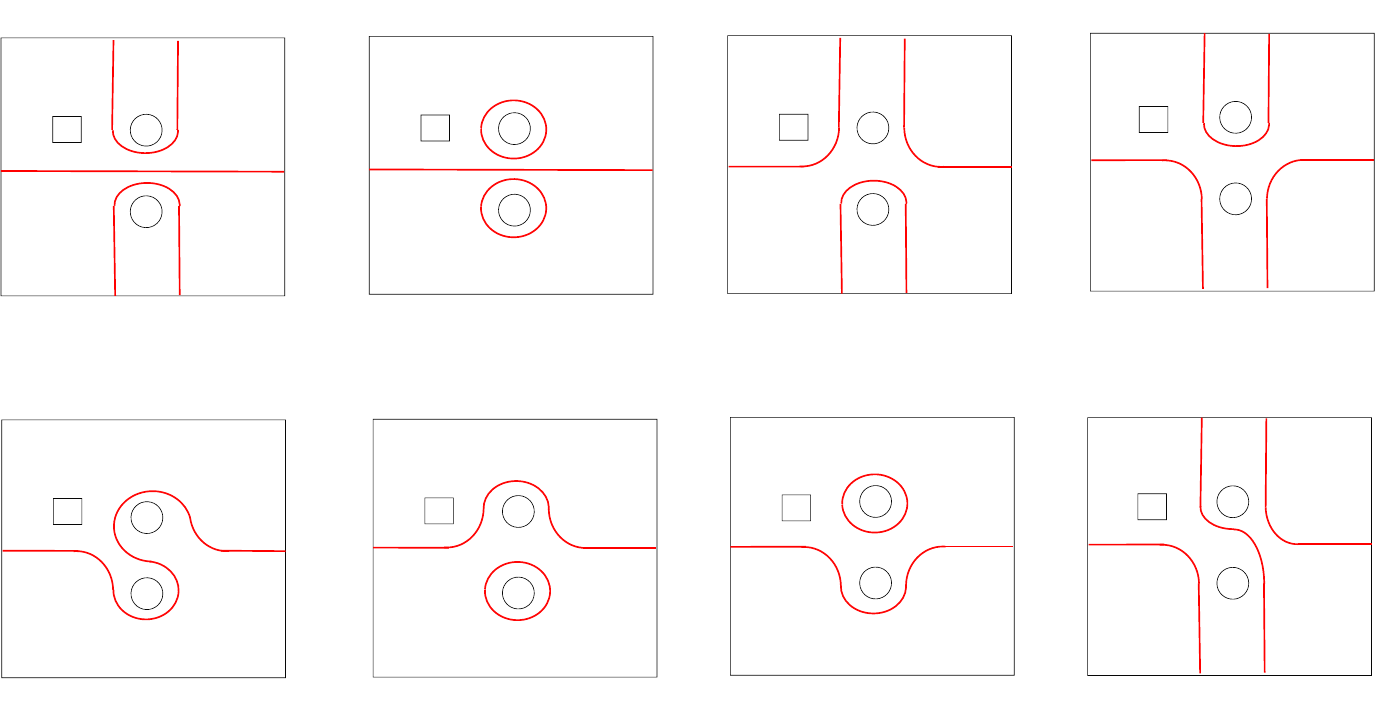}}%
    \put(0.00468666,0.50539024){\color[rgb]{0,0,0}\makebox(0,0)[lb]{\smash{$+++$}}}%
    \put(0.27246779,0.50651849){\color[rgb]{0,0,0}\makebox(0,0)[lb]{\smash{$++-$}}}%
    \put(0.53312319,0.50702857){\color[rgb]{0,0,0}\makebox(0,0)[lb]{\smash{$+-+$}}}%
    \put(0.80552611,0.51202139){\color[rgb]{0,0,0}\makebox(0,0)[lb]{\smash{$-++$}}}%
    \put(0.00468667,0.22739184){\color[rgb]{0,0,0}\makebox(0,0)[lb]{\smash{$---$}}}%
    \put(0.2760539,0.22943219){\color[rgb]{0,0,0}\makebox(0,0)[lb]{\smash{$+--$}}}%
    \put(0.53619919,0.22739184){\color[rgb]{0,0,0}\makebox(0,0)[lb]{\smash{$-+-$}}}%
    \put(0.79634451,0.22637165){\color[rgb]{0,0,0}\makebox(0,0)[lb]{\smash{$--+$}}}%
    \put(0.01284809,0.29166304){\color[rgb]{0,0,0}\makebox(0,0)[lb]{\smash{$deg=3$}}}%
    \put(0.28319515,0.29268319){\color[rgb]{0,0,0}\makebox(0,0)[lb]{\smash{$deg=5$}}}%
    \put(0.54844132,0.29064285){\color[rgb]{0,0,0}\makebox(0,0)[lb]{\smash{$deg=1$}}}%
    \put(0.80756642,0.28860248){\color[rgb]{0,0,0}\makebox(0,0)[lb]{\smash{$deg=1$}}}%
    \put(0.0107052,0.0034006){\color[rgb]{0,0,0}\makebox(0,0)[lb]{\smash{$deg=1$}}}%
    \put(0.28915577,0.0048433){\color[rgb]{0,0,0}\makebox(0,0)[lb]{\smash{$deg=3$}}}%
    \put(0.55606432,0.00628607){\color[rgb]{0,0,0}\makebox(0,0)[lb]{\smash{$deg=3$}}}%
    \put(0.8085454,0.0048433){\color[rgb]{0,0,0}\makebox(0,0)[lb]{\smash{$deg=1$}}}%
  \end{picture}%
\endgroup%

  \end{center}
  As the resolution~$++-$ appears in $\gamma \cdot \delta-\delta\cdot \gamma$ with
  coefficient $\lbrace 1 \rbrace\neq 0,$ that multicurve is a linear combination
  of multicurves with smaller degree, which is what we wanted.
\end{proof}

\subsection{Sausage decompositions of surfaces}
\label{sec:sausage}
Thanks to Proposition~\ref{prop:dual_graph} and Lemma~\ref{lemma:vertex}, the
skein module of $\Sigma \times S^1$ is generated by arrowed multicurves which
are put on~$\Sigma$ in a kind of standard form, that fits well with a special
kind of pair of pants decomposition of~$\Sigma$ which we will call a
\textit{sausage-decomposition} of $\Sigma.$ We define such a decomposition below:
\begin{definition}
  Let $\Sigma$ be an oriented compact closed surface of genus $g.$
  A sausage-decomposed subsurface $\Sigma'\subset \Sigma$ is the data of a
  subsurface of~$\Sigma$ with~$2$ to~$4$ boundary components together with a
  pair of pants decomposition of the type described in Figure~\ref{fig:sausage}
  with pair of pants being ordered from left to right.
  Moreover, a sausage decomposition of~$\Sigma$ is the data of a sausage subsurface
  of~$\Sigma$ composed of $2g-2$ pair of pants, and with $2$ boundary components
  that each bound a disk in~$\Sigma.$
\end{definition}
\begin{figure}[hb]
  \centering
  \def \svgwidth{0.6\columnwidth}
  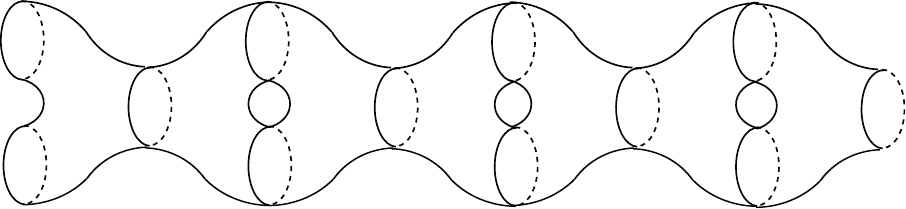
  \caption{A sausage-decomposed subsurface of $\Sigma$}
  \label{fig:sausage}
\end{figure}
Let us remark that in a sausage decomposition of~$\Sigma$ there is a well-defined
left (and right) boundary component.

Let us a fix a sausage-decomposed subsurface $\Sigma' \subset \Sigma,$ containing
$N=|\chi(\Sigma')|$ pairs of pants. We fix an integer $m\geqslant 0$ and
$k_0\in \lbrace 1,\ldots N-1 \rbrace$ so that the subsurface which is the union
of the first $k_0$ pairs of pants of $\Sigma'$ has a single boundary component to its right.
Let $a,b \in \Z$ and $k \in \lbrace 0,\ldots ,N \rbrace\setminus \lbrace k_0 \rbrace,$
we write $D_{a,b}^k$ for the diagram (depending on the parity of $k$):
\begin{center}
\begingroup%
  \makeatletter%
  \providecommand\color[2][]{%
    \errmessage{(Inkscape) Color is used for the text in Inkscape, but the package 'color.sty' is not loaded}%
    \renewcommand\color[2][]{}%
  }%
  \providecommand\transparent[1]{%
    \errmessage{(Inkscape) Transparency is used (non-zero) for the text in Inkscape, but the package 'transparent.sty' is not loaded}%
    \renewcommand\transparent[1]{}%
  }%
  \providecommand\rotatebox[2]{#2}%
  \ifx\svgwidth\undefined%
    \setlength{\unitlength}{316.10256348bp}%
    \ifx\svgscale\undefined%
      \relax%
    \else%
      \setlength{\unitlength}{\unitlength * \real{\svgscale}}%
    \fi%
  \else%
    \setlength{\unitlength}{\svgwidth}%
  \fi%
  \global\let\svgwidth\undefined%
  \global\let\svgscale\undefined%
  \makeatother%
  \begin{picture}(1,0.30067205)%
    \put(0,0){\includegraphics[width=\unitlength]{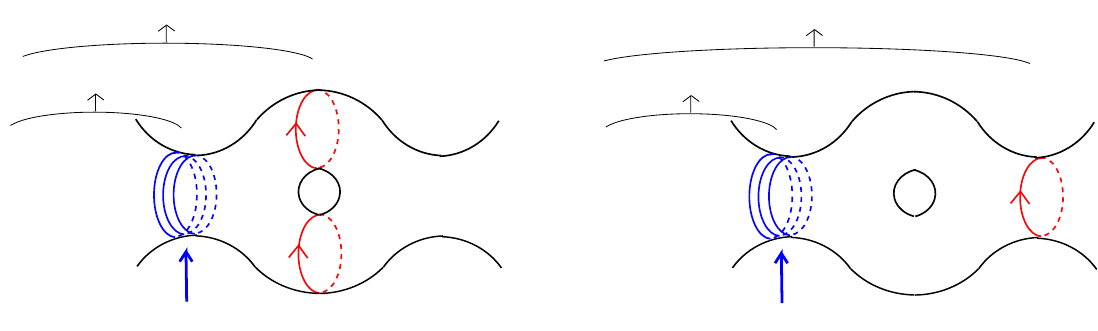}}%
    \put(0.23208987,0.15368277){\color[rgb]{1,0,0}\makebox(0,0)[lb]{\smash{$a$}}}%
    \put(0.23656379,0.08018286){\color[rgb]{1,0,0}\makebox(0,0)[lb]{\smash{$b$}}}%
    \put(0.1221596,0.00348732){\color[rgb]{0,0,1}\makebox(0,0)[lb]{\smash{$m \ \textrm{curves}$}}}%
    \put(0.07638172,0.28484452){\color[rgb]{0,0,0}\makebox(0,0)[lb]{\smash{$k \ \textrm{pairs of pants}$}}}%
    \put(-0.00072168,0.22197808){\color[rgb]{0,0,0}\makebox(0,0)[lb]{\smash{$k_0 \ \textrm{pairs of pants}$}}}%
    \put(0.90827754,0.03839432){\color[rgb]{1,0,0}\makebox(0,0)[lb]{\smash{$a+b$}}}%
    \put(0.66457519,0.00208595){\color[rgb]{0,0,1}\makebox(0,0)[lb]{\smash{$m \ \textrm{curves}$}}}%
    \put(0.66670215,0.28073156){\color[rgb]{0,0,0}\makebox(0,0)[lb]{\smash{$k \ \textrm{pairs of pants}$}}}%
    \put(0.5416939,0.22057672){\color[rgb]{0,0,0}\makebox(0,0)[lb]{\smash{$k_0 \ \textrm{pairs of pants}$}}}%
    \put(0.51631765,0.111673){\color[rgb]{0,0,0}\makebox(0,0)[lb]{\smash{or}}}%
  \end{picture}%
\endgroup%

\end{center}
When $k=k_0,$ we would like to define a $D_{a,b}^k$ similarly, except we need
to specify the relative position of the blue and red curves. Thus we get two
versions $_lD_{a,b}^k$ and $_rD_{a,b}^k,$ where the red curve is put respectively
to the left and to the right of the $m$ blue curves.
\begin{center}
\begingroup%
  \makeatletter%
  \providecommand\color[2][]{%
    \errmessage{(Inkscape) Color is used for the text in Inkscape, but the package 'color.sty' is not loaded}%
    \renewcommand\color[2][]{}%
  }%
  \providecommand\transparent[1]{%
    \errmessage{(Inkscape) Transparency is used (non-zero) for the text in Inkscape, but the package 'transparent.sty' is not loaded}%
    \renewcommand\transparent[1]{}%
  }%
  \providecommand\rotatebox[2]{#2}%
  \ifx\svgwidth\undefined%
    \setlength{\unitlength}{152.81339111bp}%
    \ifx\svgscale\undefined%
      \relax%
    \else%
      \setlength{\unitlength}{\unitlength * \real{\svgscale}}%
    \fi%
  \else%
    \setlength{\unitlength}{\svgwidth}%
  \fi%
  \global\let\svgwidth\undefined%
  \global\let\svgscale\undefined%
  \makeatother%
  \begin{picture}(1,0.37707307)%
    \put(0,0){\includegraphics[width=\unitlength]{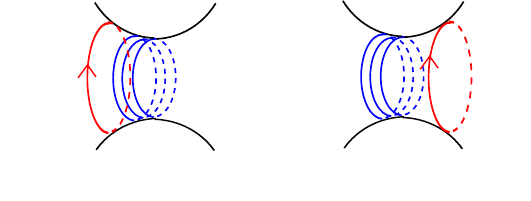}}%
    \put(-0.00149284,0.1770863){\color[rgb]{1,0,0}\makebox(0,0)[lb]{\smash{$a+b$}}}%
    \put(0.19575988,0.00881386){\color[rgb]{0,0,0}\makebox(0,0)[lb]{\smash{$_lD_{a,b}^{k_0}$}}}%
    \put(0.839776,0.25140994){\color[rgb]{1,0,0}\makebox(0,0)[lb]{\smash{$a+b$}}}%
    \put(0.66308992,0.01208909){\color[rgb]{0,0,0}\makebox(0,0)[lb]{\smash{$_rD_{a,b}^{k_0}$}}}%
  \end{picture}%
\endgroup%

\end{center}
All those diagrams define elements of $\Sk(\Sigma \times S^1),$ which depend also
on $m$ and the sausage-decomposed subsurface $\Sigma',$ but for simplicity we
omit those dependence from the notations.

It is obvious that if $m=0$ then $_lD_{a,b}^{k_0}=_rD_{a,b}^{k_0}.$
There is a more general relation between those two diagrams which we describe
in the following lemma:
\begin{lemma}\label{lemma:leftright}
  For any $a,b \in \Z$ and $m \geqslant 0$ we have 
  \[ _lD_{a,b}^{k_0}\equiv A^{2m(a+b)} \ _rD_{a,b}^{k_0}, \]
  modulo diagrams of smaller degree.
\end{lemma}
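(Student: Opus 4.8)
The plan is to establish the case $m=1$ and then iterate. Since the $m$ blue curves carry no arrows, sliding the red curve rightwards past them one at a time changes neither the arrow count $a+b$ on the red curve nor, to leading order, anything else in the diagram, so each elementary slide should contribute the same factor $A^{2(a+b)}$; the general case then follows by applying the $m=1$ relation $m$ times, the lower-degree error terms produced at each step remaining lower-degree throughout. Running the same argument in the opposite direction shows a leftward slide contributes $A^{-2(a+b)}$, so the two statements are consistent and the case $a+b<0$ needs no separate treatment.

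For the base case I would argue exactly as in the proofs of Propositions~\ref{prop:sphere_relation} and~\ref{prop:torus_relation}, by analysing the resolutions of a diagram that realises the relevant isotopy. Here $_lD_{a,b}^{k_0}$ and $_rD_{a,b}^{k_0}$ differ by moving the red curve --- a copy of the curve carrying the blue parallel family, decorated with $a+b$ arrows --- from the left of the single blue curve $\beta$ to its right. Because $\beta$ sits at $S^1$-level $0$ while the red curve winds $a+b$ times around the $S^1$ factor, this isotopy cannot avoid $\beta$: a continuous family of degree-$(a+b)$ self-maps of $S^1$ is always surjective, so at the critical moment the red curve necessarily meets $\beta$, at points lying over $\Sigma\times\{0\}$, that is, at its arrows. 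After first using move $R_4$ to group all $a+b$ arrows together, the red strand crosses $\beta$ in a single bundle, and the interpolating diagram has a bounded number of crossings between the two curves, controlled by $a+b$. Resolving them by $K_1$, the plan is to verify that exactly one resolution --- the one sliding the bundle straight through --- reproduces $_rD_{a,b}^{k_0}$, that it carries the coefficient $A^{2(a+b)}$, and that every other resolution replaces the red curve and $\beta$ by a curve obtained as a connected sum along an arc, or creates a trivial component; in every such case the degree drops (a connected sum of the red curve with the parallel curve $\beta$ along a short arc is either a single non-separating curve or a trivial curve), so those terms are absorbed into $\equiv$.

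The main obstacle is the framing and arrow bookkeeping needed to pin down the exponent $2(a+b)$ precisely, including its sign. Because the canonical parallel of a component is its push-off in the positive $S^1$-direction, transporting an arrowed curve across the level-$0$ slice affects its self-framing in addition to its linking with $\beta$, and both effects must be reconciled to produce the single coefficient $A^{2(a+b)}$ rather than some other power. A convenient way to handle this is to localise the leading-term computation to an annular neighbourhood of the blue family, where it becomes a finite Kauffman-bracket calculation independent of the ambient surface $\Sigma$, and then to carry the arrows through using move $R_4$ again. The remaining point --- checking that every non-leading resolution genuinely lowers the degree, and not merely the complexity --- is routine given the degree bookkeeping already set up in Section~\ref{sec:reducing_degree}.
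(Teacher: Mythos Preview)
Your proposal has the right geometric intuition---sliding the arrowed curve past a parallel unarrowed one is not an isotopy, and the obstruction is measured by $a+b$---but it stops short of a proof. You never write down the interpolating diagram whose resolutions you intend to analyse, and you explicitly flag the exponent computation as an unresolved ``main obstacle''. Without a concrete diagram, the crossing count, the coefficient of the leading resolution, and even its sign remain undetermined; ``localise to an annulus and do a finite Kauffman computation'' is a plan, not a computation.

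The paper's argument sidesteps all of this with one observation you are missing: the red curve and the $m$ blue curves are \emph{parallel} copies of the same separating curve on $\Sigma$, so the only thing distinguishing ``red'' from ``blue'' is which of the $m+1$ curves carries the arrows. Rather than moving the curve, the paper moves the arrows, one at a time, to the adjacent curve. The local step is a single application of $R_5$ equating two one-crossing diagrams; resolving both by $K_1$ gives
\[
A^{-1}\bigl(\text{parallels, arrows }a{+}1,\,b\bigr) + A\bigl(\text{one curve}\bigr) \;=\; A\bigl(\text{parallels, arrows }a,\,b{+}1\bigr) + A^{-1}\bigl(\text{one curve}\bigr),
\]
and since the single-curve terms have strictly smaller degree, this yields the arrow-shift relation $[\,a{+}1,\,b\,] \equiv A^{2}\,[\,a,\,b{+}1\,]$. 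Pushing each of the $a+b$ arrows across each of the $m$ intermediate curves thus costs exactly $A^{2}$ per step, producing $A^{2m(a+b)}$ with no framing bookkeeping at all. Your bundled approach would amount to performing these $m(a+b)$ elementary steps simultaneously; unbundling them is precisely what makes the exponent transparent.
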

\begin{proof}
  For any $a,b \in \Z$ we have:
  \begin{center}
\begingroup%
  \makeatletter%
  \providecommand\color[2][]{%
    \errmessage{(Inkscape) Color is used for the text in Inkscape, but the package 'color.sty' is not loaded}%
    \renewcommand\color[2][]{}%
  }%
  \providecommand\transparent[1]{%
    \errmessage{(Inkscape) Transparency is used (non-zero) for the text in Inkscape, but the package 'transparent.sty' is not loaded}%
    \renewcommand\transparent[1]{}%
  }%
  \providecommand\rotatebox[2]{#2}%
  \ifx\svgwidth\undefined%
    \setlength{\unitlength}{194.58708496bp}%
    \ifx\svgscale\undefined%
      \relax%
    \else%
      \setlength{\unitlength}{\unitlength * \real{\svgscale}}%
    \fi%
  \else%
    \setlength{\unitlength}{\svgwidth}%
  \fi%
  \global\let\svgwidth\undefined%
  \global\let\svgscale\undefined%
  \makeatother%
  \begin{picture}(1,0.28988385)%
    \put(0,0){\includegraphics[width=\unitlength]{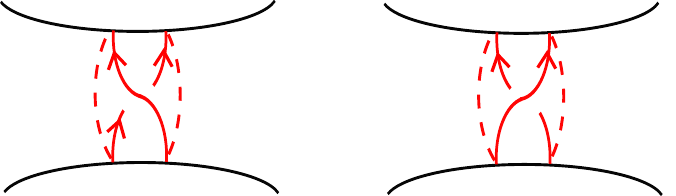}}%
    \put(0.04409258,0.18346266){\color[rgb]{1,0,0}\makebox(0,0)[lb]{\smash{$a$}}}%
    \put(0.27518967,0.17885257){\color[rgb]{1,0,0}\makebox(0,0)[lb]{\smash{$b$}}}%
    \put(0.44152879,0.08204072){\color[rgb]{0,0,0}\makebox(0,0)[lb]{\smash{$=$}}}%
    \put(0.6117179,0.18059595){\color[rgb]{1,0,0}\makebox(0,0)[lb]{\smash{$a$}}}%
    \put(0.84281506,0.17598585){\color[rgb]{1,0,0}\makebox(0,0)[lb]{\smash{$b+1$}}}%
  \end{picture}%
\endgroup%

  \end{center}
  Thus:
  \begin{center}
\begingroup%
  \makeatletter%
  \providecommand\color[2][]{%
    \errmessage{(Inkscape) Color is used for the text in Inkscape, but the package 'color.sty' is not loaded}%
    \renewcommand\color[2][]{}%
  }%
  \providecommand\transparent[1]{%
    \errmessage{(Inkscape) Transparency is used (non-zero) for the text in Inkscape, but the package 'transparent.sty' is not loaded}%
    \renewcommand\transparent[1]{}%
  }%
  \providecommand\rotatebox[2]{#2}%
  \ifx\svgwidth\undefined%
    \setlength{\unitlength}{449.53955078bp}%
    \ifx\svgscale\undefined%
      \relax%
    \else%
      \setlength{\unitlength}{\unitlength * \real{\svgscale}}%
    \fi%
  \else%
    \setlength{\unitlength}{\svgwidth}%
  \fi%
  \global\let\svgwidth\undefined%
  \global\let\svgscale\undefined%
  \makeatother%
  \begin{picture}(1,0.12652667)%
    \put(0,0){\includegraphics[width=\unitlength]{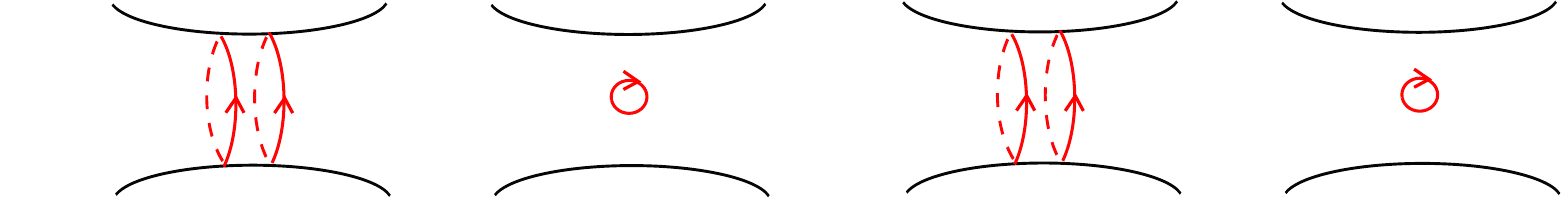}}%
    \put(0.07351491,0.06615928){\color[rgb]{1,0,0}\makebox(0,0)[lb]{\smash{$a+1$}}}%
    \put(0.19691696,0.05787193){\color[rgb]{1,0,0}\makebox(0,0)[lb]{\smash{$b$}}}%
    \put(-0.00050746,0.04870679){\color[rgb]{0,0,0}\makebox(0,0)[lb]{\smash{$A^{-1}$}}}%
    \put(0.26842657,0.04761225){\color[rgb]{0,0,0}\makebox(0,0)[lb]{\smash{$+A$}}}%
    \put(0.41898112,0.07233015){\color[rgb]{1,0,0}\makebox(0,0)[lb]{\smash{$a-b+1$}}}%
    \put(0.60168376,0.0611979){\color[rgb]{1,0,0}\makebox(0,0)[lb]{\smash{$a$}}}%
    \put(0.70093411,0.06562196){\color[rgb]{1,0,0}\makebox(0,0)[lb]{\smash{$b+1$}}}%
    \put(0.50605198,0.05010112){\color[rgb]{0,0,0}\makebox(0,0)[lb]{\smash{$=A$}}}%
    \put(0.76990144,0.05409111){\color[rgb]{0,0,0}\makebox(0,0)[lb]{\smash{$+A^{-1}$}}}%
    \put(0.92554057,0.07372447){\color[rgb]{1,0,0}\makebox(0,0)[lb]{\smash{$a-b+1$}}}%
  \end{picture}%
\endgroup%

  \end{center}
  so that:
  \begin{center}
\begingroup%
  \makeatletter%
  \providecommand\color[2][]{%
    \errmessage{(Inkscape) Color is used for the text in Inkscape, but the package 'color.sty' is not loaded}%
    \renewcommand\color[2][]{}%
  }%
  \providecommand\transparent[1]{%
    \errmessage{(Inkscape) Transparency is used (non-zero) for the text in Inkscape, but the package 'transparent.sty' is not loaded}%
    \renewcommand\transparent[1]{}%
  }%
  \providecommand\rotatebox[2]{#2}%
  \ifx\svgwidth\undefined%
    \setlength{\unitlength}{213.87888184bp}%
    \ifx\svgscale\undefined%
      \relax%
    \else%
      \setlength{\unitlength}{\unitlength * \real{\svgscale}}%
    \fi%
  \else%
    \setlength{\unitlength}{\svgwidth}%
  \fi%
  \global\let\svgwidth\undefined%
  \global\let\svgscale\undefined%
  \makeatother%
  \begin{picture}(1,0.26522751)%
    \put(0,0){\includegraphics[width=\unitlength]{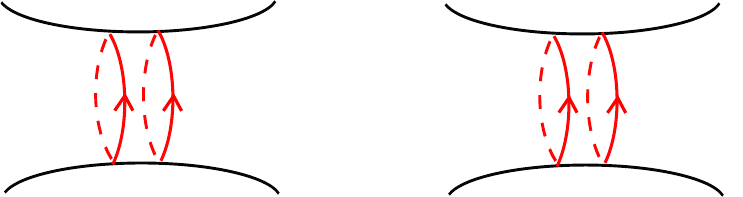}}%
    \put(0.00489085,0.14127546){\color[rgb]{1,0,0}\makebox(0,0)[lb]{\smash{$a+1$}}}%
    \put(0.26426243,0.12385677){\color[rgb]{1,0,0}\makebox(0,0)[lb]{\smash{$b$}}}%
    \put(0.41456425,0.10229255){\color[rgb]{0,0,0}\makebox(0,0)[lb]{\smash{$=A^2$}}}%
    \put(0.64838448,0.12517977){\color[rgb]{1,0,0}\makebox(0,0)[lb]{\smash{$a$}}}%
    \put(0.8569931,0.1344785){\color[rgb]{1,0,0}\makebox(0,0)[lb]{\smash{$b+1$}}}%
  \end{picture}%
\endgroup%

  \end{center}
  Thus we can push any arrow on any curve to the curve immediately to its
  right, at the expense of multiplying by $A^2$ each time.
  To push all arrows from the leftmost to the rightmost of the $m+1$ curves,
  we multiply by $A^{2m(a+b)}.$
\end{proof}
The next proposition says that we can push multicurves $D_{a,b}^k$ outward to
the boundary of the subsurface~$\Sigma'.$
\begin{proposition}\label{prop:pushout}
  Let $V^{\partial \Sigma'}$ be the subspace of $\Sk (\Sigma \times S^1)$
  spanned by the elements $D_{a,b}^0$ and $D_{a,b}^N.$ Then, up to diagrams
  of smaller degree, for any $a,b \in \Z,$ we have
  $_lD_{a,b}^{k_0}, _rD_{a,b}^{k_0} \in V^{\partial \Sigma'}.$
\end{proposition}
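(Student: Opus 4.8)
The plan is to push the red curve of ${}_lD_{a,b}^{k_0}$ and ${}_rD_{a,b}^{k_0}$ outward along the sausage, one pair of pants at a time, until it reaches the left boundary~$c_0$ of~$\Sigma'$ — at which point the diagram becomes a $\QQ(A)$-combination of the $D_{a',b'}^{0}$ — or the right boundary~$c_N$, giving a combination of the $D_{a',b'}^{N}$; every step is carried out modulo diagrams of degree strictly less than the common degree $2m+2$ of the diagrams involved. By Lemma~\ref{lemma:leftright} we already know that ${}_lD_{a,b}^{k_0}\equiv A^{2m(a+b)}\,{}_rD_{a,b}^{k_0}$ modulo smaller degree, so it would suffice to push only one of the two; I would nonetheless push ${}_rD_{a,b}^{k_0}$ to the right and ${}_lD_{a,b}^{k_0}$ to the left, so that in each case the $m$ blue curves — all of which sit at position~$k_0$ — stay out of the way of the moving red curve.

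The core of the argument is an elementary sliding relation: for each $k$ with $0\leqslant k<N$, and modulo diagrams of degree $\leqslant 2m+1$, the diagram $D_{a,b}^{k+1}$ is a $\QQ(A)$-linear combination of the diagrams $D_{a',b'}^{k}$, and conversely — reading ${}_rD^{k_0}$ or ${}_lD^{k_0}$ for $D^{k_0}$ according to the side from which the red curve is pushed. I would prove this by the device already used for Propositions~\ref{prop:sphere_relation} and~\ref{prop:torus_relation} and for Lemma~\ref{lemma:valency2}: choose a curve~$\delta$ meeting the red curve, localized near~$P_{k+1}$, let~$\gamma'$ be a suitable fractional Dehn twist of the red curve along~$\delta$ so that a regular neighbourhood of $\gamma'\cup\delta$ realises the passage from position~$k$ to position~$k+1$, and expand the commutation relation $\gamma'\cdot\delta=\delta\cdot\gamma'$ (a consequence of~$R_5$) over all Kauffman resolutions. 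A degree count of the type already performed should show that the only resolutions reaching degree $2m+2$ are copies of $D^{k}$ and $D^{k+1}$ carrying admissible arrow labels, every other resolution having degree $\leqslant 2m+1$. Comparing the two sides of $\gamma'\cdot\delta=\delta\cdot\gamma'$ then yields a relation which, after dividing if necessary by an invertible factor of the form $\pm\{j\}$ with $j\geqslant1$, expresses the $D^{k}$ in terms of the $D^{k+1}$ and vice versa. Only the algebraic number of arrows on each component matters (relation~$R_4$), and the fractional push contributes monomials in~$A$ exactly as in Lemma~\ref{lemma:leftright}; this is what produces the relabelling of $(a,b)$ into $(a',b')$.

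Granting this sliding relation, the proposition follows by a short induction. Applying it successively across $P_{k_0+1},P_{k_0+2},\dots,P_{N}$ rewrites ${}_rD_{a,b}^{k_0}$, modulo diagrams of strictly smaller degree, as a $\QQ(A)$-linear combination of the $D_{a',b'}^{N}$, hence as an element of $V^{\partial \Sigma'}$; symmetrically, applying it across $P_{k_0},P_{k_0-1},\dots,P_1$ rewrites ${}_lD_{a,b}^{k_0}$ as a combination of the $D_{a',b'}^{0}$, again in $V^{\partial \Sigma'}$.

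The step I expect to be the real obstacle is the degree bookkeeping inside the sliding relation: one must pin down, uniformly in $m\geqslant0$ and for both parities of~$k$, exactly which two resolutions of $\gamma'\cdot\delta$ attain the maximal degree, and check that the $m$ passive blue curves never combine with the resolved red arcs into an unexpected separating curve in any of the remaining resolutions. This is a finite but somewhat delicate case analysis, in the same spirit as — though heavier than — those in the proofs of Propositions~\ref{prop:sphere_relation} and~\ref{prop:torus_relation}.
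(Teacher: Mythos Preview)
Your plan has a genuine gap at its core: the one--step ``sliding relation'' you postulate --- expressing a single $D_{a,b}^{k+1}$ as a $\QQ(A)$--combination of the $D_{a',b'}^{k}$ modulo lower degree --- does not exist. The commutation $\gamma'\cdot\delta=\delta\cdot\gamma'$ you describe, carried out in the two--holed torus sitting in the $(k{+}1)$--th pair of pants, is exactly what produces Proposition~\ref{prop:2holed_torus_relation} and hence Lemma~\ref{lemma:D_ab^k_relations}. Its top--degree content is
\[
A\,D_{a,b}^{k}-A^{-1}D_{a+1,b+1}^{k}\;=\;A\,D_{a+1,b+1}^{k+1}-A^{-1}D_{a,b}^{k+1},
\]
i.e.\ $\Delta_-D^{k}=\Delta_+D^{k+1}$ in the operator notation of the paper. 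Both sides carry \emph{two} diagrams with shifted arrow labels, and there is no second independent relation at this step to let you isolate one of them: the two equalities in Proposition~\ref{prop:2holed_torus_relation}, applied in $P_{k+1}$, go to the two different boundary curves and hence relate $D^{k}$ to $D^{k+1}$ and $D^{k}$ to $D^{k-1}$ respectively --- not two independent relations between $D^{k}$ and $D^{k+1}$. Algebraically, the obstruction is that neither $\Delta_+=As-A^{-1}$ nor $\Delta_-=-A^{-1}s+A$ is invertible on the $\QQ(A)$--span of the $D_{\bullet,\bullet}^{k}$ (they act as multiplication by a non--unit in $\QQ(A)[s^{\pm1}]$), so you cannot ``divide by $\pm\{j\}$'' and solve for a single $D^{k}$.

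The paper's proof gets around this by pushing in \emph{both} directions simultaneously. Iterating Lemma~\ref{lemma:D_ab^k_relations} gives only $\Delta_+^{k_0}\,{}_lD_{a,b}^{k_0}\in V^{\partial\Sigma'}$ and $\Delta_-^{\,N-k_0}\,{}_rD_{a,b}^{k_0}\in V^{\partial\Sigma'}$; converting ${}_l$ to ${}_r$ via Lemma~\ref{lemma:leftright} turns the first into $\Delta_{+,m}^{k_0}\,{}_rD_{a,b}^{k_0}\in V^{\partial\Sigma'}$. The key algebraic step is then the identity
\[
\id_V=\frac{1}{A^{4m+2}-A^{-2}}\bigl(A^{-1}\Delta_{+,m}+A^{4m+1}\Delta_-\bigr),
\]
raised to the $N$--th power: upon expanding, every monomial contains either $\Delta_{+,m}^{k_0}$ or $\Delta_-^{\,N-k_0}$, so ${}_rD_{a,b}^{k_0}=\id_V^{\,N}({}_rD_{a,b}^{k_0})$ lands in $V^{\partial\Sigma'}$. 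It is precisely this combination of the two directions that makes the argument go through; a purely one--sided push, as you propose, cannot.
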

This proposition rests upon the following relations between the $D_{a,b}^k:$
\begin{lemma}\label{lemma:D_ab^k_relations}
  Let $k \in \lbrace 0,\ldots , N-1 \rbrace$ and $a,b \in \Z.$
  \begin{itemize}
  \item[-]If $k \notin \lbrace k_0-1,k_0\rbrace$ then
  \[ A D_{a,b}^k-A^{-1}D_{a+1,b+1}^k=A D_{a+1,b+1}^{k+1}-A^{-1} D_{a,b}^{k+1}. \]
  \item[-]If $k=k_0$ then
    \[ A \ _rD_{a,b}^k-A^{-1}\ _rD_{a+1,b+1}^k=A D_{a+1,b+1}^{k+1}-A^{-1} D_{a,b}^{k+1}. \]
  \item[-]If $k=k_0-1$ then
    \[ A D_{a,b}^k-A^{-1}D_{a+1,b+1}^k=A \ _lD_{a+1,b+1}^{k+1}-A^{-1} \ _lD_{a,b}^{k+1}. \]
  \end{itemize}
\end{lemma}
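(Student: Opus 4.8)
The three identities are \emph{local} relations: for each $k$, the diagrams $D_{a,b}^k$ and $D_{a,b}^{k+1}$ (and their decorated variants at $k_0$) agree outside a neighbourhood of the pair of pants $P_{k+1}$ sitting between levels $k$ and $k+1$, and inside that neighbourhood they differ only in whether the red curve lies to the left of $P_{k+1}$ (yielding the boundary of $P_1\cup\cdots\cup P_k$) or to the right (yielding the boundary of $P_1\cup\cdots\cup P_{k+1}$). The plan is to exhibit, in that neighbourhood, a single diagram $X$ with exactly one crossing whose two Kauffman resolutions $K_1$ are precisely these two positions of the red curve carrying the arrow data $(a,b)$, so that $\langle X\rangle=A\,[\text{red on the left},(a,b)]+A^{-1}\,[\text{red on the right},(a,b)]$; then slide the red strand across the handle of $P_{k+1}$ by a Reidemeister move $R_5$ (together with obvious isotopies), which, exactly as $R_5$ was used in the proofs of Proposition~\ref{prop:2holed_torus_relation} and Lemma~\ref{lemma:leftright}, replaces $X$ by a diagram $X'$ with the opposite crossing and with arrow data shifted to $(a+1,b+1)$. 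Resolving $X'$ gives $\langle X'\rangle=A^{-1}\,[\text{red on the left},(a+1,b+1)]+A\,[\text{red on the right},(a+1,b+1)]$, and the isotopy invariance $\langle X\rangle=\langle X'\rangle$ is, after rearranging, precisely the asserted relation.

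\textbf{Carrying it out.}
First I would fix the sausage picture near $P_{k+1}$ and draw $X$ together with the $R_5$ move explicitly, in the spirit of the figures preceding Proposition~\ref{prop:2holed_torus_relation}; this identifies the two resolutions of $X$ and the effect of $R_5$ on the arrows. The one point requiring care is that the two resolutions of $X$ are \emph{exactly} $D_{a,b}^k$ and $D_{a,b}^{k+1}$ with no correction terms of smaller degree: this amounts to checking, from the model picture, that each resolved curve is connected, is a non-trivial separating curve, and is isotopic in $\Sigma$ to $\partial(P_1\cup\cdots\cup P_k)$, respectively $\partial(P_1\cup\cdots\cup P_{k+1})$, which is immediate once the picture is drawn. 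Since $X$ has a single crossing, no further terms can appear, so the resulting identity is exact. The arrow bookkeeping — that $R_5$ raises both labels by exactly $+1$ and does nothing to the $m$ blue curves in the first case — is the same computation already performed in Lemma~\ref{lemma:leftright}.

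\textbf{The case distinction.}
The three cases differ only in the position of the red curve relative to the $m$ blue curves near the distinguished index $k_0$. For $k\notin\{k_0-1,k_0\}$ the red curve never meets the blue curves during the slide, so both resolutions are honest diagrams $D_{\bullet,\bullet}^k$ and $D_{\bullet,\bullet}^{k+1}$, giving the first formula. For $k=k_0$ the red curve in $D^{k_0}$ is threaded through the $m$ blue curves, so to perform the slide across $P_{k_0+1}$ cleanly one starts instead from $_rD_{a,b}^{k_0}$ (red to the right of the blue curves); then the left-hand side involves $_rD_{a,b}^{k_0}$ and $_rD_{a+1,b+1}^{k_0}$ while the right-hand side involves honest diagrams $D^{k_0+1}_{\bullet,\bullet}$, which is the second formula. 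For $k=k_0-1$ the slide across $P_{k_0}$ deposits the red curve to the \emph{left} of the $m$ blue curves, so the level-$k_0$ diagrams that appear are the variants $_lD^{k_0}_{\bullet,\bullet}$, giving the third formula. The main obstacle is purely organisational: setting up the model diagram $X$ so that its $K_1$-resolutions are recognisably the claimed four diagrams, and verifying that $R_5$ acts on the arrows exactly as in Lemma~\ref{lemma:leftright}. Once the picture is fixed, each of the three identities is a one-line consequence of $\langle X\rangle=\langle X'\rangle$.
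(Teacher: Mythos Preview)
Your proposal is correct and is essentially the paper's proof unpacked. The paper simply says that the three identities are direct applications of the two-holed torus relations of Proposition~\ref{prop:2holed_torus_relation}, applied in the $(k{+}1)$-th pair of pants of the sausage (with the $m$ extra blue curves kept out of the way when $k\in\{k_0-1,k_0\}$); and those relations were themselves obtained by exactly the $R_5$-plus-single-crossing-resolution mechanism you describe. One small terminological slip: $P_{k+1}$ is a pair of pants and has no handle --- the $R_5$ move you need is local and requires none, while the two-holed torus in which the two parallel non-separating red curves naturally sit is the union of two consecutive pairs of pants in the sausage.
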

\begin{proof}
  Those equations are direct applications of the first or second two-holed
  torus relation of Proposition~\ref{prop:2holed_torus_relation}. We apply them
  in the $k+1$-th pair of pants of the decomposition of~$\Sigma',$ which may
  oriented to the right or to the left depending on the parity of~$k.$
  The cases where $k=k_0-1$ or $k_0$ work the same as the others, as we can
  always keep the extra curves away.
\end{proof}
Let $V$ be the $\QQ(A)$ vector space formally spanned by elements $D_{a,b}^k$
(and elements $_l D_{a,b}^{k_0}, \ _rD_{a,b}^{k_0}$) for $a,b \in \Z$ and
$k \in \lbrace 0, \ldots, N \rbrace.$ We will by a slight abuse of notation,
sometimes consider elements of~$V$ as elements of $\Sk (\Sigma \times S^1)$
that might be thus subject to relations.
We define on~$V$ a shift operator $s\colon V \rightarrow V$ by
$s(D_{a,b}^k)=D_{a+1,b+1}^k.$
It should be kept in mind that this operator $s$ acts
on {\em the space of diagrams $V$, before quotienting by the skein relations:}
it does not act on elements of the skein module.
Let also $A\colon V \rightarrow V$ be the multiplication operator by $A$
and consider
\[ \Delta_+=As-A^{-1}, \ \Delta_-=-A^{-1}s+A, \ \textrm{and}
\ \Delta_{+,m}=A^{2m+1}s-A^{-1}. \]
We note that $\Delta_{+,0}=\Delta_+.$
\begin{lemma}\label{lemma:D_ab^k_relations2}
  For any $a,b\in \Z$ we have $\Delta_+^{k_0} \ _lD_{a,b}^{k_0}=0$ and
  $\Delta_-^{N-k_0} \ _rD_{a,b}^{N-k_0} =0.$
\end{lemma}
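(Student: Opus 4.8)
The plan is to turn the recursions of Lemma~\ref{lemma:D_ab^k_relations} into identities between operators on the diagram space~$V$ and to propagate them from the special position~$k_0$ out to the two boundary positions $0$ and~$N$. The basic observation is that $s$, the scalar operator~$A$, and hence $\Delta_+=As-A^{-1}$ and $\Delta_-=A-A^{-1}s$ all lie in $\QQ(A)[s,s^{-1}]$, so they commute pairwise. In these terms, Lemma~\ref{lemma:D_ab^k_relations} reads $\Delta_-D^k=\Delta_+D^{k+1}$ for every $k\notin\{k_0-1,k_0\}$, together with the two endpoint relations $\Delta_-D^{k_0-1}=\Delta_+\ _lD^{k_0}$ and $\Delta_-\ _rD^{k_0}=\Delta_+D^{k_0+1}$.

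First I would prove, by induction on $i$ for $0\leqslant i\leqslant k_0-1$, the identity $\Delta_+^{\,i}D^{k_0-1}=\Delta_-^{\,i}D^{k_0-1-i}$: the case $i=0$ is trivial, and the step from $i$ to $i+1$ follows by applying $\Delta_+$ and invoking the generic recursion $\Delta_+D^{k_0-1-i}=\Delta_-D^{k_0-2-i}$ (legitimate since $k_0-2-i\geqslant 0$ and $k_0-2-i\notin\{k_0-1,k_0\}$), after commuting the new $\Delta_+$ past the accumulated $\Delta_-$'s. Combining the case $i=k_0-1$ with the endpoint relation $\Delta_+\ _lD^{k_0}=\Delta_-D^{k_0-1}$ then gives
\[ \Delta_+^{\,k_0}\ _lD^{k_0}_{a,b}=\Delta_-\bigl(\Delta_+^{\,k_0-1}D^{k_0-1}\bigr)=\Delta_-^{\,k_0}D^{0}_{a,b}. \]
The mirror computation — propagating $\Delta_-\ _rD^{k_0}=\Delta_+D^{k_0+1}$ together with the generic recursions for $k_0<k<N$ towards the right boundary — yields in exactly the same way $\Delta_-^{\,N-k_0}\ _rD^{k_0}_{a,b}=\Delta_+^{\,N-k_0}D^{N}_{a,b}$; since the $m$ blue curves sit at position~$k_0$, which is the $(N-k_0)$-th from the right-hand boundary of~$\Sigma'$, this is the second assertion of the lemma.

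It therefore remains to establish the two \emph{boundary relations} $\Delta_-D^{0}=0$ and $\Delta_+D^{N}=0$, that is $D^{0}_{a+1,b+1}=A^{2}D^{0}_{a,b}$ and $D^{N}_{a+1,b+1}=A^{-2}D^{N}_{a,b}$ for all $a,b\in\Z$. In $D^{0}_{a,b}$ the red curve has been slid all the way onto the left boundary component of~$\Sigma'$, which bounds a disk in~$\Sigma$; sitting next to this disk it is a trivial component, and I would argue — using the trivial-component manipulation of Proposition~\ref{prop:multicurves_span} together with a local analysis of the arrowed diagram in the adjacent solid torus — that passing from $a+b$ to $a+b+2$ arrows over this curve contributes exactly one extra framing factor $A^{2}$ (the sign of the exponent being fixed by the $+S^1$ push-off framing convention, and reversing at the opposite boundary to give $A^{-2}$ for $D^{N}$). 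Granting this, $\Delta_-^{\,k_0}D^{0}=0$ and $\Delta_+^{\,N-k_0}D^{N}=0$ follow at once, and the two displayed identities give $\Delta_+^{\,k_0}\ _lD^{k_0}_{a,b}=0$ and $\Delta_-^{\,N-k_0}\ _rD^{k_0}_{a,b}=0$, as claimed.

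The interior propagation is purely formal bookkeeping with commuting shift operators, so I expect the boundary step to be the only genuine difficulty: isolating the precise framing factor produced when an arrowed curve is pushed across a boundary disk requires a careful reading of the local picture, and it is exactly here that the normalisations of Section~\ref{sec:arrowed_diagrams} (direction of arrows, choice of canonical framing) enter.
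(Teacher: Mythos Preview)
Your propagation argument is correct and is exactly what the paper does: using the commutativity of $\Delta_+$ and $\Delta_-$ together with Lemma~\ref{lemma:D_ab^k_relations} to obtain $\Delta_+^{k_0}\,_lD_{a,b}^{k_0}=\Delta_-^{k_0}D_{a,b}^0$ and $\Delta_-^{N-k_0}\,_rD_{a,b}^{k_0}=\Delta_+^{N-k_0}D_{a,b}^N$.

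The divergence is in your last paragraph. The ``$=0$'' in the statement is the paper's shorthand for ``$\in V^{\partial\Sigma'}$'' (and, implicitly, modulo smaller-degree diagrams): the paper's own proof literally concludes with $\Delta_+^{k_0}\,_lD_{a,b}^{k_0}=\Delta_-^{k_0}D_{a,b}^0\in V^{\partial\Sigma'}$ and stops, and this is precisely what is used in the proof of Proposition~\ref{prop:pushout}. Since $D_{a,b}^0$ and $D_{a,b}^N$ span $V^{\partial\Sigma'}$ by definition, there is nothing more to prove.

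Your extra ``boundary relations'' $\Delta_-D^0=0$ and $\Delta_+D^N=0$ are therefore unnecessary, and the argument you sketch for them is in fact wrong in this generality. You assume the left boundary component of $\Sigma'$ bounds a disk in $\Sigma$, but Lemma~\ref{lemma:D_ab^k_relations2} is stated for an arbitrary sausage-decomposed \emph{subsurface} $\Sigma'\subset\Sigma$, which has $2$ to $4$ boundary components that are typically essential in $\Sigma$. The disk-bounding hypothesis only enters later, for the full sausage decomposition of $\Sigma$ used in Corollary~\ref{coro:saucisson} and Proposition~\ref{prop:spanned_by_nsep_curves}. So drop the boundary step entirely; once you land in $V^{\partial\Sigma'}$ you are done.
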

\begin{proof}
  By Lemma~\ref{lemma:D_ab^k_relations}, we have
  $\Delta_+ D_{a,b}^k= \Delta_- D_{a,b}^{k+1},$ provided we do not run into
  the extra~$m$ curves.
  As the operators $\Delta_+$ and $\Delta_-$ commute, we get that
  $\Delta_+^2 D_{a,b}^k= \Delta_- D_{a,b}^{k+2}$ and so on, as long as we
  do not collide with the $m$ extra curves. In the end we get
  $\Delta_+^{k_0} \ _lD_{a,b}^{k_0}= \Delta_-^{k_0} D_{a,b}^0 \in V^{\partial \Sigma'},$
  and $\Delta_-^{N-k_0} \ _rD_{a,b}^{k_0}= \Delta_+ D_{a,b}^N \in V^{\partial \Sigma'}.$
\end{proof} 
We note after factoring in the relations of Lemma~\ref{lemma:leftright}, up to
smaller degree terms, the action of~$s$ on the elements $_lD_{a,b}^{k_0}$ is
like the action of $A^{2m(a+b)} \circ s \circ A^{-2m(a+b)}$ on the
$_r D_{a,b}^{k_0}.$ So that, up to smaller degree terms,
\[ s(_lD_{a,b}^{k_0})=\ _lD_{a+1,b+1}^{k_0} \equiv
A^{2m (a+b+2)}\  _rD_{a+1,b+1}^{k_0}=A^{4m} s(_rD_{a,b}^{k_0}). \]
Therefore, $\Delta_+(_lD_{a,b}^{k_0})=\Delta_{+,m}(_r D_{a,b}^{k_0})$ and
$\Delta_+^{k_0}(_lD_{a,b}^{k_0})=\Delta_{+,m}^{k_0}(_r D_{a,b}^{k_0}).$
In the end, modulo diagrams of smaller degree, we have:
\[
\begin{cases} \Delta_{+,m}^{k_0} \ _rD_{a,b}^{k_0} \in V^{\partial \Sigma'}
\\ \Delta_-^{N-k_0} \ _rD_{a,b}^{k_0} \in V^{\partial \Sigma'}.
\end{cases} \]
\begin{proof}[Proof of Proposition \ref{prop:pushout}]
  We will
  write $D \in V^{\partial \Sigma'}$ for a given diagram~$D$ to mean that
  $D \in V^{\partial \Sigma'}$ up to smaller degree diagrams.
  By Lemma~\ref{lemma:leftright}, we only want to show that
  $_rD_{a,b}^{k_0} \in V^{\partial \Sigma'}.$
  We have $\Delta_{+,m}^{k_0} (_rD_{a,b}^{k_0}) \in V^{\partial \Sigma'}$ and
  $\Delta_-^{N-k_0}(_rD_{a,b}^{k_0}) \in V^{\partial \Sigma'}.$
  We note that $\Delta_{+,m}=A^{4m+1}s-A^{-1}$ and $\Delta_-=-A^{-1}s+A$ commute, and that
  \[ \id_V=\frac{1}{A^{4m+2}-A^{-2}}\left( A^{-1}\Delta_{+,m}+A^{4m+1}\Delta_- \right). \]
  We conclude that
  $_rD_{a,b}^{k_0}=\id_V^{N}(_rD_{a,b}^{k_0})=
  \frac{1}{(A^{4m+2}-A^{-2})^N}\left( A^{-1}\Delta_{+,m}+A^{4m+1}\Delta_- \right)^N
  (_rD_{a,b}^{k_0}) \in V^{\partial \Sigma'},$
  as, after expanding, any term will contain either $\Delta_{+,m}^{k_0}$
  or $\Delta_-^{N-k_0}.$
\end{proof}
\begin{corollary}\label{coro:saucisson}
  Fix a sausage decomposition of $\Sigma.$ Any (arrowed) multicurve of the form
  \begin{center}
\begingroup%
  \makeatletter%
  \providecommand\color[2][]{%
    \errmessage{(Inkscape) Color is used for the text in Inkscape, but the package 'color.sty' is not loaded}%
    \renewcommand\color[2][]{}%
  }%
  \providecommand\transparent[1]{%
    \errmessage{(Inkscape) Transparency is used (non-zero) for the text in Inkscape, but the package 'transparent.sty' is not loaded}%
    \renewcommand\transparent[1]{}%
  }%
  \providecommand\rotatebox[2]{#2}%
  \ifx\svgwidth\undefined%
    \setlength{\unitlength}{443.78110352bp}%
    \ifx\svgscale\undefined%
      \relax%
    \else%
      \setlength{\unitlength}{\unitlength * \real{\svgscale}}%
    \fi%
  \else%
    \setlength{\unitlength}{\svgwidth}%
  \fi%
  \global\let\svgwidth\undefined%
  \global\let\svgscale\undefined%
  \makeatother%
  \begin{picture}(1,0.20265682)%
    \put(0,0){\includegraphics[width=\unitlength]{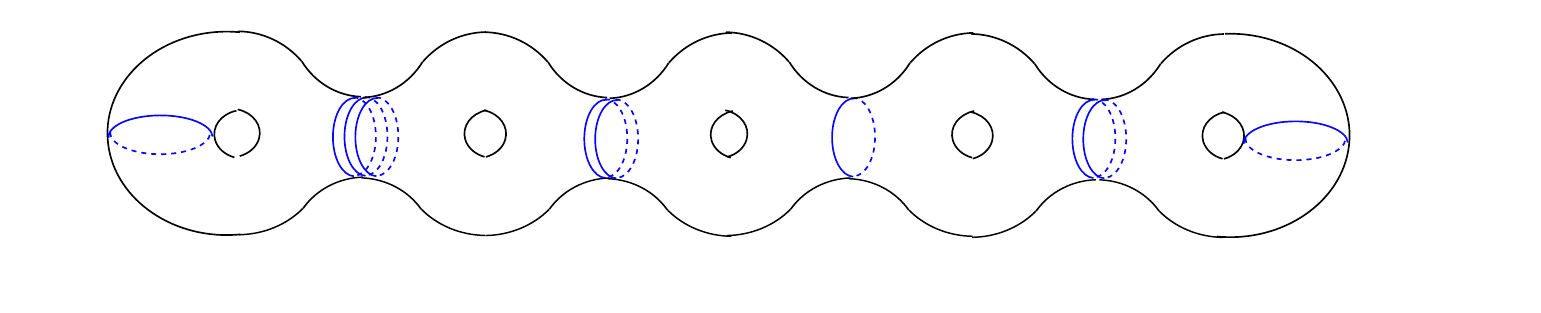}}%
    \put(-0.00051405,0.14478613){\color[rgb]{0,0,1}\makebox(0,0)[lb]{\smash{$n_1\geqslant 0$}}}%
    \put(0.20579057,0.02527792){\color[rgb]{0,0,1}\makebox(0,0)[lb]{\smash{$m_1\geqslant 0$
   curves}}}%
    \put(0.36545747,0.19138296){\color[rgb]{0,0,1}\makebox(0,0)[lb]{\smash{$m_2\geqslant 0$
   curves}}}%
    \put(0.68479125,0.01819591){\color[rgb]{0,0,1}\makebox(0,0)[lb]{\smash{$m_{g-1}\geqslant 0$
   curves}}}%
    \put(0.8859587,0.10615704){\color[rgb]{0,0,1}\makebox(0,0)[lb]{\smash{$n_2 \geqslant 0$
  curves}}}%
    \put(-0.00002644,0.12543407){\color[rgb]{0,0,1}\makebox(0,0)[lb]{\smash{curves}}}%
  \end{picture}%
\endgroup%

  \end{center}
  is actually a linear combination of curves of the same type with all $m_i=0,$
  and multicurves of smaller degree.
  In the above, arrows may be added in an arbitrary way.
\end{corollary}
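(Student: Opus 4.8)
The plan is to eliminate the bunches of parallel curves $m_1,\dots,m_{g-1}$ from the diagram one at a time, using Proposition~\ref{prop:pushout} (together with the operator calculus preceding it) as the essential input, and to run an induction on the total number $m_1+\dots+m_{g-1}$.

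If all the $m_i$ vanish, the diagram is already of the required form. Otherwise, I would single out one nonzero bunch $m_j$, consisting of $m_j$ parallel copies of a separating curve of~$\Sigma$ sitting at an internal waist of the fixed sausage decomposition, and view a suitable chain of consecutive pairs of pants of the decomposition --- one reaching a disk-bounding boundary component of~$\Sigma$ --- as a sausage-decomposed subsurface $\Sigma'\subseteq\Sigma$ in which this bunch occupies the critical position~$k_0$ of Proposition~\ref{prop:pushout}, the remaining components of the multicurve being isotoped off the pairs of pants that get resolved. Proposition~\ref{prop:pushout}, combined with Lemma~\ref{lemma:leftright} to move arrows between the ${}_lD$ and ${}_rD$ versions, then rewrites the diagram, modulo diagrams of strictly smaller degree, as a linear combination of diagrams in which the bunch~$m_j$ has been pushed out to a boundary component of~$\Sigma'$. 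Choosing~$\Sigma'$ so that this boundary component bounds a disk in~$\Sigma$, the pushed-out bunch becomes a collection of trivial curves, which Proposition~\ref{prop:multicurves_span} erases since the multicurve still has other components; this strictly lowers the degree and in particular makes~$m_j$ equal to~$0$ while leaving the other~$m_i$, $n_1$ and~$n_2$ untouched. Since arrows may be placed arbitrarily, the diagrams produced are of the asserted form with $\sum_i m_i$ decreased, so the induction hypothesis applies to them, and the lower-degree error terms are simply carried along, as the statement permits.

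The point requiring the most care is purely combinatorial: arranging, for a well-chosen bunch, a sausage-decomposed subsurface $\Sigma'$ of the shape demanded by Proposition~\ref{prop:pushout}, with the bunch at position~$k_0$ and one disk-bounding boundary component of~$\Sigma$ among the boundary of~$\Sigma'$, and such that every other curve of the multicurve --- in particular the remaining bunches $m_i$ and the families $n_1,n_2$ --- can be kept clear of the pairs of pants that are modified, so that the identities of Lemmas~\ref{lemma:D_ab^k_relations} and~\ref{lemma:D_ab^k_relations2} and of Proposition~\ref{prop:pushout} go through unchanged; if some bunch is blocked by another one, one simply treats first the bunch that is closest to an end of the sausage. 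Granting this, everything else is bookkeeping, and the induction terminates because $m_1+\dots+m_{g-1}$ is a non-negative integer strictly decreasing at each step.
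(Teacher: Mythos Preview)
Your overall strategy---iterate Proposition~\ref{prop:pushout} to clear out the separating bunches---is the paper's one-line argument. But two points in your execution are not right.

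First, Proposition~\ref{prop:pushout} does not push the curve to a single chosen boundary of~$\Sigma'$: its conclusion is that ${}_rD_{a,b}^{k_0}$ lies in the span of the $D_{a,b}^0$ \emph{and} the $D_{a,b}^N$ (modulo smaller degree), and it moves only the one red curve, leaving the $m$ blue curves at~$k_0$ in place. So after one application the peeled-off curve sits at position~$0$ in some terms and at position~$N$ in others. If position~$0$ is at a disk boundary but position~$N$ is the next waist, the $D^N$ terms transfer one curve from~$m_j$ to~$m_{j+1}$, and $\sum_i m_i$ is unchanged; your induction does not advance on those terms. Relatedly, whenever $n_1>0$ the non-separating curves at the leftmost bulge lie strictly between every~$m_j$ and the left disk and cannot be isotoped off the intervening pairs of pants, so no admissible~$\Sigma'$ of the shape required by Proposition~\ref{prop:pushout} reaches the disk at all.

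Second, if your mechanism worked it would express the original diagram entirely as a combination of \emph{strictly smaller-degree} diagrams, rendering the ``same type with all $m_i=0$'' clause of the corollary vacuous. The paper's remark immediately after the corollary says the opposite: the separating curves are ``replaced by non-separating ones, twice in number'', so the top-degree part survives with larger~$n_1,n_2$. The intended picture is not to push to a disk but to a non-separating gap: one takes~$\Sigma'$ short enough that both ends are non-separating positions, and then both the $D^0$ and $D^N$ terms turn one separating curve into a pair of non-separating ones---same degree, but $\sum_i m_i$ genuinely drops. Iterating this, and then pushing the resulting non-separating curves further outward past the remaining waists (again via the $D_{a,b}^k$ machinery), is the ``step by step'' the paper alludes to.
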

It should be noted that by applying Corollary~\ref{coro:saucisson} we may
actually increase the complexity, as the separating curves are replaced by
non-separating ones, twice in number.
\begin{proof}
  Corollary~\ref{coro:saucisson} results of applying Proposition~\ref{prop:pushout}
  many times, pushing the groups of separating curves out step by step.
\end{proof}
From the results of Section~\ref{sec:dual_graph}, the skein module
$\Sk(\Sigma \times S^1)$ is spanned by multicurves of the type described in
Corollary~\ref{coro:saucisson}.
This may be used to
strengthen Corollary~\ref{coro:saucisson} to the following proposition,
which is the main result of the present section.
\begin{proposition}\label{prop:spanned_by_nsep_curves}
  The skein module $\Sk (\Sigma \times S^1)$ is spanned by arrowed trivial
  curves and arrowed non-separating simple closed curves.
\end{proposition}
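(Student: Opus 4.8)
The plan is to assemble Proposition~\ref{prop:multicurves_span}, Proposition~\ref{prop:dual_graph}, Lemmas~\ref{lemma:vertex}--\ref{lemma:valency2} and Corollary~\ref{coro:saucisson} into an induction. By Proposition~\ref{prop:multicurves_span} it suffices to show that every arrowed multicurve with no trivial component is a linear combination of arrowed trivial curves and arrowed non-separating simple closed curves; since these latter are precisely the arrowed multicurves of degree $\leqslant 1$, I would prove by induction on $d=\deg(\gamma)$ that every multicurve of degree $d$ is a linear combination of multicurves of degree $<d$ (the case $d\leqslant 1$ being vacuous), and within a fixed degree $d\geqslant 2$ I would run a secondary induction on the number of connected components.

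So let $\gamma$ have no trivial component and $\deg(\gamma)=d\geqslant 2$. If $\gamma$ is not stable then, by definition, it is a linear combination of multicurves of strictly smaller complexity; each of these either has degree $<d$, which is handled by the outer induction, or has degree $d$ with fewer components, which is handled by the inner induction. Thus we may assume $\gamma$ is stable. By Proposition~\ref{prop:dual_graph} its dual graph is linear; by Lemma~\ref{lemma:valency2} the valency-$2$ vertices carry no non-separating component of $\gamma$, and by Lemma~\ref{lemma:vertex} each of the two extremal vertices carries at most one. Consequently the separating components of $\gamma$ form a linear chain of disjoint curves cutting $\Sigma$ into pieces of genus $\geqslant 1$, and one can choose a sausage decomposition of $\Sigma$ having these curves among its block-separating waist curves, so that $\gamma$ takes exactly the form treated in Corollary~\ref{coro:saucisson}, with at most one non-separating ``end'' curve on each of the two sides. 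Note that $\gamma$ must contain at least one separating component: otherwise its dual graph is a single vertex carrying all of $\gamma$, forcing $\deg(\gamma)\leqslant 1$ by Lemma~\ref{lemma:vertex}.

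Now Corollary~\ref{coro:saucisson} rewrites $\gamma$, modulo multicurves of degree $<d$, as a linear combination of multicurves all of whose components are parallel copies of at most two mutually non-separating simple closed curves $\alpha$ and $\beta$. The degree-$<d$ terms are finished by the outer induction. For each of the remaining all-non-separating multicurves: if it has at most one component it is already one of the claimed generators; otherwise I would pick one copy of each of $\alpha$ and $\beta$ (or two copies of $\alpha$ when $\beta$ is absent) together with a curve $\delta$ meeting each of them exactly once and whose regular neighborhood is a two-holed torus --- such a $\delta$ exists, exactly as in the proof of Lemma~\ref{lemma:vertex} --- and apply the two-holed torus relation of Proposition~\ref{prop:2holed_torus_relation}. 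Since the connected sum along $\delta$ of two distinct mutually non-separating curves is again non-separating (a homology computation), and the connected sum of two parallel copies is trivial, this relation rewrites the multicurve as a combination of multicurves with strictly fewer essential components, that is, of degree $<d$; the outer induction then finishes these as well, completing the proof.

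I expect two points to require the most care. The first is surface-topological: verifying that the combinatorial normal form delivered by Proposition~\ref{prop:dual_graph} and Lemmas~\ref{lemma:vertex}--\ref{lemma:valency2} can genuinely be realized inside a \emph{sausage} decomposition of $\Sigma$ with disk-bounding outer boundary, to which Corollary~\ref{coro:saucisson} applies verbatim (this is where the standing hypothesis $g\geqslant 2$ enters). The second is the termination bookkeeping: Corollary~\ref{coro:saucisson} by itself need not lower the degree, because it may trade each separating component for twice as many non-separating ones (as remarked after that corollary), so it is only in combination with the two-holed torus relation --- and the fact that the connected sums occurring there are non-separating or trivial, never essential and separating --- that the degree is forced to drop and the induction closes.
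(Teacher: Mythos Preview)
Your overall strategy is sound up through the application of Corollary~\ref{coro:saucisson}, but the final step has a genuine gap. After Corollary~\ref{coro:saucisson} you are left with multicurves consisting of $n_1$ parallel copies of a non-separating curve $\alpha$ and $n_2$ parallel copies of another, $\beta$, with $n_1+n_2=d\geqslant 2$. You then invoke Proposition~\ref{prop:2holed_torus_relation} and assert that the single curves it produces are always non-separating or trivial. This is false when the two curves you feed in are parallel. If $n_2=0$ (which does occur among the outputs of Corollary~\ref{coro:saucisson}) you are forced to take two parallel copies of $\alpha$; any two-holed torus $T$ containing them as non-separating curves has $[\partial T]=[\alpha]+[\alpha]=0$ in $H_1(\Sigma,\Z/2)$, so both boundary components of $T$ are separating-or-trivial, and since $g\geqslant 2$ at least one of them is a genuine non-trivial separating curve. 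Thus Proposition~\ref{prop:2holed_torus_relation} trades two parallel non-separating curves (degree $2$) for one separating curve (still degree $2$), and re-applying Corollary~\ref{coro:saucisson} to that separating curve sends you right back to two parallel non-separating curves: the induction does not close. A related problem arises when $n_1\geqslant 2$ and $n_2\geqslant 1$: there is no simple closed curve $\delta$ meeting exactly one copy of $\alpha$ and one copy of $\beta$ and nothing else, because any arc leaving a copy of $\alpha$ on its annulus side is trapped between two consecutive copies.

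The paper's proof handles this endgame differently. When $n_1\geqslant 2$ (or symmetrically $n_2\geqslant 2$) it does \emph{not} appeal to Proposition~\ref{prop:2holed_torus_relation} directly; instead it runs the same $\Delta$-operator machinery as in the proof of Proposition~\ref{prop:pushout}, now with modified operators $\Delta_{+,n_1-2}$ and $\Delta_{-,n_2}$ that account for the extra parallel curves at each end, and uses an explicit decomposition of $\id_V$ as a polynomial in these commuting operators to force $D_{a,b}^1\equiv 0$ modulo smaller degree. Only in the single remaining case $n_1=n_2=1$ does the paper use the two-holed torus relation, and there the two curves are mutually non-separating, so your homology argument does apply and the degree drops.
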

\begin{proof}
  Thanks to Corollary~\ref{coro:saucisson}, we will only need to show to
  multicurves of the type described in the corollary and with $m_i=0$ and
  $n_1+n_2 \geqslant 2$ are linear combinations of multicurves of smaller degree.
  
  Let us first assume that $n_1$ or $n_2$ is at least $2.$ Without loss of
  generality, let us assume that $n_1\geqslant 2.$ Slightly adapting our previous
  notations, we define arrowed multicurves $D_{a,b}^k$ compatible with the sausage
  decomposition of~$\Sigma,$ that have $n_1-2$ extra curves on the left and $n_2$
  on the right. For example the multicurve we considered is of the form
  $D_{a,b}^1$ and looks like:
  \begin{center}
\begingroup%
  \makeatletter%
  \providecommand\color[2][]{%
    \errmessage{(Inkscape) Color is used for the text in Inkscape, but the package 'color.sty' is not loaded}%
    \renewcommand\color[2][]{}%
  }%
  \providecommand\transparent[1]{%
    \errmessage{(Inkscape) Transparency is used (non-zero) for the text in Inkscape, but the package 'transparent.sty' is not loaded}%
    \renewcommand\transparent[1]{}%
  }%
  \providecommand\rotatebox[2]{#2}%
  \ifx\svgwidth\undefined%
    \setlength{\unitlength}{358.21242676bp}%
    \ifx\svgscale\undefined%
      \relax%
    \else%
      \setlength{\unitlength}{\unitlength * \real{\svgscale}}%
    \fi%
  \else%
    \setlength{\unitlength}{\svgwidth}%
  \fi%
  \global\let\svgwidth\undefined%
  \global\let\svgscale\undefined%
  \makeatother%
  \begin{picture}(1,0.22873421)%
    \put(0,0){\includegraphics[width=\unitlength]{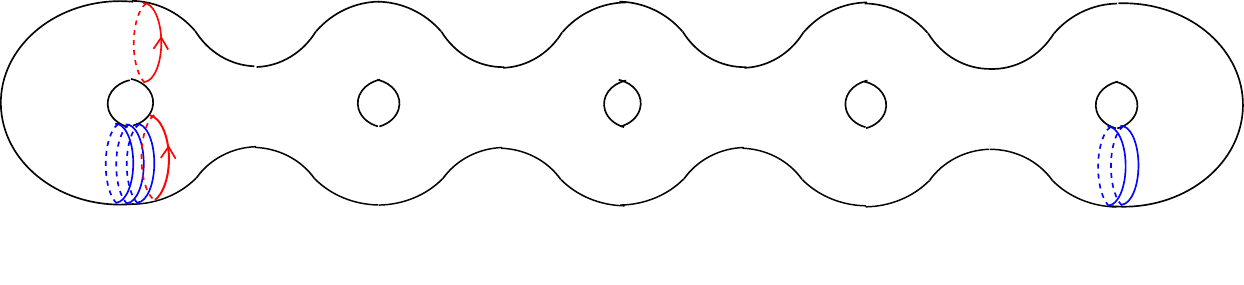}}%
    \put(0.01628123,0.02813219){\color[rgb]{0,0,1}\makebox(0,0)[lb]{\smash{$n_1-2\geqslant 0$}}}%
    \put(0.8567655,0.03216317){\color[rgb]{0,0,1}\makebox(0,0)[lb]{\smash{$n_2 \geqslant 0$
}}}%
    \put(0.04000914,0.00020937){\color[rgb]{0,0,1}\makebox(0,0)[lb]{\smash{curves}}}%
    \put(0.13778372,0.17507614){\color[rgb]{1,0,0}\makebox(0,0)[lb]{\smash{$a$}}}%
    \put(0.14342369,0.11416453){\color[rgb]{1,0,0}\makebox(0,0)[lb]{\smash{$b$}}}%
    \put(0.873799,0.00756921){\color[rgb]{0,0,1}\makebox(0,0)[lb]{\smash{curves}}}%
  \end{picture}%
\endgroup%

  \end{center}
  By comparison, the multicurve $D_{a,b}^{2g-1}$ would look like:
  \begin{center}
\begingroup%
  \makeatletter%
  \providecommand\color[2][]{%
    \errmessage{(Inkscape) Color is used for the text in Inkscape, but the package 'color.sty' is not loaded}%
    \renewcommand\color[2][]{}%
  }%
  \providecommand\transparent[1]{%
    \errmessage{(Inkscape) Transparency is used (non-zero) for the text in Inkscape, but the package 'transparent.sty' is not loaded}%
    \renewcommand\transparent[1]{}%
  }%
  \providecommand\rotatebox[2]{#2}%
  \ifx\svgwidth\undefined%
    \setlength{\unitlength}{358.21242676bp}%
    \ifx\svgscale\undefined%
      \relax%
    \else%
      \setlength{\unitlength}{\unitlength * \real{\svgscale}}%
    \fi%
  \else%
    \setlength{\unitlength}{\svgwidth}%
  \fi%
  \global\let\svgwidth\undefined%
  \global\let\svgscale\undefined%
  \makeatother%
  \begin{picture}(1,0.22873421)%
    \put(0,0){\includegraphics[width=\unitlength]{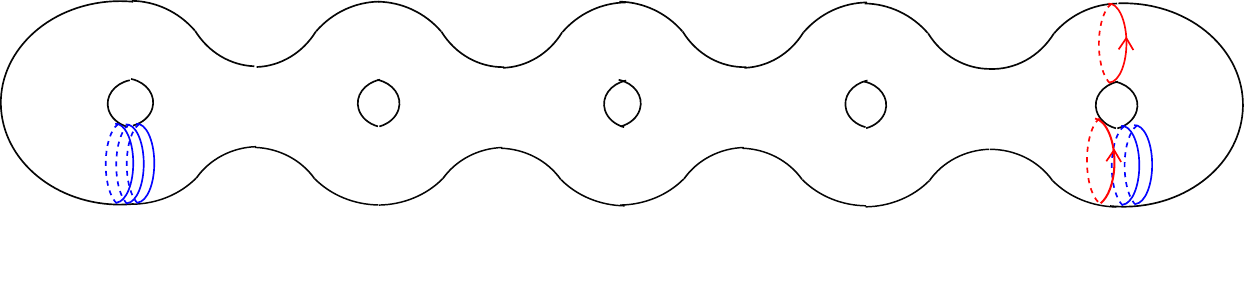}}%
    \put(0.01628123,0.0281322){\color[rgb]{0,0,1}\makebox(0,0)[lb]{\smash{$n_1-2\geqslant 0$}}}%
    \put(0.85676553,0.03216316){\color[rgb]{0,0,1}\makebox(0,0)[lb]{\smash{$n_2 \geqslant 0$
}}}%
    \put(0.04000914,0.00020936){\color[rgb]{0,0,1}\makebox(0,0)[lb]{\smash{curves}}}%
    \put(0.87379903,0.00756921){\color[rgb]{0,0,1}\makebox(0,0)[lb]{\smash{curves}}}%
    \put(0.84268976,0.17353386){\color[rgb]{1,0,0}\makebox(0,0)[lb]{\smash{$a$}}}%
    \put(0.83704978,0.11713422){\color[rgb]{1,0,0}\makebox(0,0)[lb]{\smash{$b$}}}%
  \end{picture}%
\endgroup%

  \end{center}
  We will use the relations of Lemma~\ref{lemma:D_ab^k_relations} to relate the
  multicurves $D_{a,b}^k$ for $k \in \lbrace 0, \ldots , N \rbrace,$ while
  always leaving the ``extra'' blue curves unchanged. Note that we can always
  (up to multicurves of smaller degree) push all arrows from the blue curves
  to the red curves.
  Let $_lD_{a,b}^1$ denote
  the arrowed multicurve where the red curve in
  $D_{a,b}^1$ is put to the left of the blue curves.
  By Lemmas~\ref{lemma:D_ab^k_relations} and~\ref{lemma:leftright},
  we have $_lD_{a,b}^1\equiv A^{2(n_1-2)b}D_{a,b}^1$ and
  $\Delta_+ \ _lD_{a,b}^1\equiv 0.$ 
  The same computation as before thus gives $\Delta_{+,n_1-2} D_{a,b}^1 \equiv 0,$
  where $\Delta_{+,n_1-2}=A^{2n_1-3}s-A^{-1}.$
  
  Similarly, if $_rD_{a,b}^{2g-1}$ is the diagram obtained from the diagram
  $D_{a,b}^{2g-1}$ by putting the red arrowed curve to the right of the~$n_2$
  blue curves, we would have $\Delta_- \ _rD_{a,b}^{2g-1} \equiv 0$ and
  $_r D_{a,b}^{2g-1} \equiv A^{-2n_2 b}D_{a,b}^{2g-1}.$
  Thus $\Delta_{-,n_2} D_{a,b}^{2g-1} \equiv 0,$ where
  $\Delta_{-,n_2}=-A^{1-2n_2}s+A.$
  
  Thanks to Lemma \ref{lemma:D_ab^k_relations}, we have:
  \[ \begin{cases}
  \Delta_{+,n_1-2}D_{a,b}^1 \equiv 0
  \\ \Delta_{-,n_2}\Delta_-^{2g-2} D_{a,b}^1 \equiv
  \Delta_{-,n_2} \Delta_+^{2g-2} D_{a,b}^{2g-1} \equiv 0.
  \end{cases}\]
  Let us note that $\Delta_-, \Delta_{+,n_1-2}$ and $\Delta_{-,n_2}$ all commute.
  Moreover, let us note that
  \[ \mathrm{id}_V=\frac{1}{A^{2n_1-2}-A^{-2-2n_2}}\left( A^{-1-2n_2}
  \Delta_{+,n_1-2}+A^{2n_1-3}\Delta_{-,n_2} \right) \]
  and 
  \[ \mathrm{id}_V=\frac{1}{A^{2n_1-2}-A^{-2}}
  \left( A^{-1}\Delta_{+,n_1-2}+A^{2n_1-3}\Delta_- \right). \]
  When expanding the expression
  \begin{multline*}\mathrm{id}_V=\frac{1}{(A^{2n_1-2}-A^{-2-2n_2})(A^{2n_1-2}-A^{-2})^{2g-2}}
    \left( A^{-1-2n_2}\Delta_{+,n_1-2}+A^{2n_1-3}\Delta_{-,n_2} \right)
    \\ \circ \left( A^{-1}\Delta_{+,n_1-2}+A^{2n_1-3}\Delta_- \right)^{2g-2},
  \end{multline*}
  any term will contain either a factor $\Delta_{+,n_1-2}$ or a factor
  $\Delta_{-,n_2}\Delta_-^{2g-2}.$ Applying $\id_V$ to $D_{a,b}^1,$ we
  conclude that $D_{a,b}^1 \equiv 0.$
  This shows that as long as $n_1 \geqslant 2,$ the multicurve above is a
  linear combination of arrowed multicurves of smaller degree.
  
  Finally, in the remaining case where $n_1=n_2=1,$ we can fit the multicurve
  on a two-holed torus subsurface, so that the two boundary components of
  the two-holed torus are non separating in~$\Sigma.$
  Proposition~\ref{prop:2holed_torus_relation} then shows that the multicurve
  with $n_1=n_2=1$ is a linear combination of non-separating simple closed
  curves (the two boundary components of the two-holed torus).
\end{proof}

\section{Elimination of arrows}
By the previous section, the skein module $\Sk (\Sigma \times S^1)$ is spanned
by all arrowed multicurves whose underlying multicurve is either a non-separating
curve or a trivial curve. We will now study the ``vertical'' part of those curves,
that is, relate elements of $\Sk(\Sigma \times S^1)$ that differ only by the
number of arrows we put on them.
We will treat the cases of non-separating curves and of the trivial curve separately.
\label{sec:elimin_arrows}

\subsection{Arrows on non-separating curves}
\label{sec:arrows_nsep}
We have:
\begin{proposition}\label{prop:arrows_non_sep}
  Let $\gamma$ be a non-separating simple closed curve, with some choice of
  orentation, and for $n\in \Z$ let~$\gamma_n$ be the arrowed curve~$\gamma$
  with~$n$ arrows in the direction of~$\gamma.$
  Then, for any $n\in \Z,$ we have $\gamma_n=\gamma_{n-2} \in \Sk( \Sigma \times S^1).$
\end{proposition}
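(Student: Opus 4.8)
We shall only use that $\gamma$ is non-separating, through the existence of a simple closed curve~$\delta$ meeting~$\gamma$ transversally in a single point~$p$; a regular neighbourhood~$T$ of $\gamma\cup\delta$ is a one-holed torus, and all the curves below will be drawn inside $T\times S^1$. For $j\in\Z$ write~$\gamma_j$ for the curve~$\gamma$ carrying~$j$ arrows, placed on a short sub-arc far from~$p$; on the torus $Z=\gamma\times S^1\subset\Sigma\times S^1$ this is the $(1,j)$-curve. Let~$\mu$ be a circle isotopic in $T\times S^1$ to the fibre $\{p\}\times S^1$, so that on~$Z$ it is the $(0,1)$-curve and thus meets~$\gamma_{n-1}$ in exactly one point. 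The plan is to apply the Kauffman relation~$K_1$ to that intersection in the two ways in which the fibre~$\mu$ may be stacked relative to~$\gamma_{n-1}$, and to compare the two outcomes by means of the commutation relation of Proposition~\ref{prop:Reidemeister}.

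Concretely, I would position~$\mu$ so that in the arrowed diagram on~$\Sigma$ it is a small circle straddling~$\gamma$; then $\gamma_{n-1}\cup\mu$ has a diagram with exactly two crossings, say with~$\gamma_{n-1}$ over~$\mu$ at both of them, representing the link ``$\gamma_{n-1}$ stacked above~$\mu$''. Resolving the two crossings by~$K_1$ yields four multicurves. The two ``essential'' smoothings — the ones that correspond to the two smoothings of the single intersection of~$\gamma_{n-1}$ and~$\mu$ on~$Z$ — are the $(1,n)$-curve~$\gamma_n$ and the $(1,n-2)$-curve~$\gamma_{n-2}$, where one uses move~$R_4$ to merge the arrow coming from~$\mu$ with those already on~$\gamma$. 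Each of the other two smoothings is~$\gamma_{n-1}$ together with one extra trivial component carrying $0$ or~$1$ arrow; the $0$-arrow one is removed by~$K_2$ at the cost of the scalar $-A^2-A^{-2}$. Performing the same computation for ``$\mu$ stacked above~$\gamma_{n-1}$'', i.e.\ with both crossings reversed, produces the same four multicurves but with the monomials~$A$ and~$A^{-1}$ interchanged at each crossing; the net effect is that the coefficients of~$\gamma_n$ and of~$\gamma_{n-2}$ are swapped, while the two terms containing a trivial component (and the $K_2$-scalar) are unchanged.

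By the commutation relation $\gamma_{n-1}\cdot\mu=\mu\cdot\gamma_{n-1}$ (a consequence of $R_4$ and~$R_5$, see Proposition~\ref{prop:Reidemeister}), the two expansions are equal. Subtracting them cancels the trivial-component terms and leaves, after the elementary bookkeeping of the smoothing coefficients, an identity of the form
\[ (A-A^{-1})\,\gamma_n=(A-A^{-1})\,\gamma_{n-2} \]
in $\Sk(\Sigma\times S^1)$. Since $\lbrace 1\rbrace=A-A^{-1}$ is invertible in~$\QQ(A)$ — and, as observed in the introduction, invertibility of the~$\lbrace k\rbrace$ is all we ever need — we deduce $\gamma_n=\gamma_{n-2}$.

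The delicate step is the arrow bookkeeping in the four smoothings: one must check that the two essential smoothings really give~$\gamma$ with $n$, respectively $n-2$, arrows (and not, say, two copies of~$\gamma_{n-1}$), that the two degenerate smoothings produce identical terms on the two sides of the commutation relation so that they cancel, and hence that the surviving coefficient is the unit~$\lbrace 1\rbrace$ rather than an expression that might vanish. It is exactly here that the hypothesis that~$\gamma$ is non-separating enters: it is what lets us arrange the fibre~$\mu$ to genuinely link~$\gamma$ and to realize the whole picture inside the one-holed-torus factor~$T$. Everything else is a mechanical application of the Kauffman relations $K_1,K_2$ and of the arrowed Reidemeister moves $R_4,R_5$.
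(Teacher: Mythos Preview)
There is a genuine error in your description of the four smoothings, and as written the argument collapses to a tautology. With $\mu$ drawn as a small circle straddling~$\gamma$, its arrow on (say) the upper arc and $\gamma$ over $\mu$ at both crossings, the \emph{lower} arc of~$\mu$ carries no arrow, so an ordinary Reidemeister~II slides $\mu$ off through the bottom: this diagram already represents the split union $\gamma_{n-1}\sqcup\mu$, and so does the both-under diagram; the $R_4,R_5$-commutation between them is vacuous. Concretely, the two one-component smoothings give the \emph{same} curve $\gamma_{n-2}$, not one $\gamma_n$ and one $\gamma_{n-2}$: in both, the upper arc is traversed from its left end to its right end relative to~$\gamma$, so the merged arrow has the same sign. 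Of the two-component smoothings, one is $\gamma_{n-2}$ together with an arrowless trivial circle and the other is $\gamma_{n-1}\sqcup\mu$ again. With Kauffman coefficients $A^2,1,1,A^{-2}$ the three $\gamma_{n-2}$-terms cancel and the whole identity reads $\gamma_{n-1}\sqcup\mu=\gamma_{n-1}\sqcup\mu$.

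Your torus heuristic can be made into a correct proof, but with a different commutation. On $Z=\gamma\times S^1$ the curves $\gamma_{n-1}$ and~$\mu$ meet once and the two smoothings are indeed $\gamma_n$ and $\gamma_{n-2}$; but the two stackings to compare live in a thickening $Z\times I$ whose $I$-direction is \emph{normal to~$\gamma$ in~$\Sigma$}, not the $S^1$-direction. Those two stackings become isotopic in $\Sigma\times S^1$ precisely because $\gamma$ is non-separating (push the fibre around the dual curve~$\delta$), not via $R_4,R_5$; and then one does get $A\gamma_n+A^{-1}\gamma_{n-2}=A^{-1}\gamma_n+A\gamma_{n-2}$, hence $\lbrace 1\rbrace(\gamma_n-\gamma_{n-2})=0$. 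The paper's route is closely related: it computes the split union $\gamma_n\sqcup\mu$ in two different ways via the arrow-shift relation, the second of which (the identity drawn with the strand passing over on the other side) is where the non-separating hypothesis actually enters, and obtains $\lbrace 2\rbrace(\gamma_{n+1}-\gamma_{n-1})=0$.
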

Based on the above proposition, to span $\Sk(\Sigma \times S^1),$ it is
sufficient to consider non-separating curves with $0$ or $1$ arrow.
Moreover, for non curves with~$1$ arrow the direction of the arrow can be
chosen arbitrarily.
\begin{proof}
  We begin by observing that by using relation $R_5$ of
  Section~\ref{sec:arrowed_diagrams} we have:
  \begin{center}
\begingroup%
  \makeatletter%
  \providecommand\color[2][]{%
    \errmessage{(Inkscape) Color is used for the text in Inkscape, but the package 'color.sty' is not loaded}%
    \renewcommand\color[2][]{}%
  }%
  \providecommand\transparent[1]{%
    \errmessage{(Inkscape) Transparency is used (non-zero) for the text in Inkscape, but the package 'transparent.sty' is not loaded}%
    \renewcommand\transparent[1]{}%
  }%
  \providecommand\rotatebox[2]{#2}%
  \ifx\svgwidth\undefined%
    \setlength{\unitlength}{102.83356934bp}%
    \ifx\svgscale\undefined%
      \relax%
    \else%
      \setlength{\unitlength}{\unitlength * \real{\svgscale}}%
    \fi%
  \else%
    \setlength{\unitlength}{\svgwidth}%
  \fi%
  \global\let\svgwidth\undefined%
  \global\let\svgscale\undefined%
  \makeatother%
  \begin{picture}(1,0.47438058)%
    \put(0,0){\includegraphics[width=\unitlength]{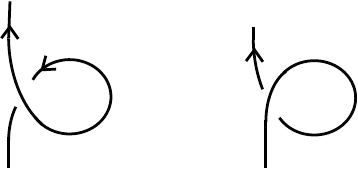}}%
    \put(0.05530736,0.40761267){\color[rgb]{0,0,0}\makebox(0,0)[lb]{\smash{$n$}}}%
    \put(0.50339803,0.13650277){\color[rgb]{0,0,0}\makebox(0,0)[lb]{\smash{$=$}}}%
    \put(0.75722305,0.37688152){\color[rgb]{0,0,0}\makebox(0,0)[lb]{\smash{$n-1$}}}%
  \end{picture}%
\endgroup%

  \end{center}
  Using Kauffman relations $K_1$ and $K_2,$ this gives the \textit{arrow-shift relation}:
  \begin{center}
\begingroup%
  \makeatletter%
  \providecommand\color[2][]{%
    \errmessage{(Inkscape) Color is used for the text in Inkscape, but the package 'color.sty' is not loaded}%
    \renewcommand\color[2][]{}%
  }%
  \providecommand\transparent[1]{%
    \errmessage{(Inkscape) Transparency is used (non-zero) for the text in Inkscape, but the package 'transparent.sty' is not loaded}%
    \renewcommand\transparent[1]{}%
  }%
  \providecommand\rotatebox[2]{#2}%
  \ifx\svgwidth\undefined%
    \setlength{\unitlength}{188.17822266bp}%
    \ifx\svgscale\undefined%
      \relax%
    \else%
      \setlength{\unitlength}{\unitlength * \real{\svgscale}}%
    \fi%
  \else%
    \setlength{\unitlength}{\svgwidth}%
  \fi%
  \global\let\svgwidth\undefined%
  \global\let\svgscale\undefined%
  \makeatother%
  \begin{picture}(1,0.23816125)%
    \put(0,0){\includegraphics[width=\unitlength]{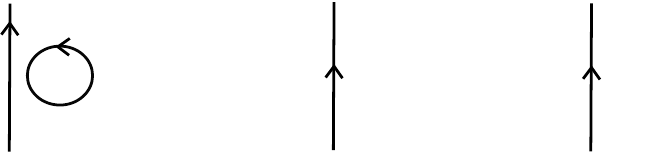}}%
    \put(0.03111322,0.21157409){\color[rgb]{0,0,0}\makebox(0,0)[lb]{\smash{$n$}}}%
    \put(0.25657185,0.11172815){\color[rgb]{0,0,0}\makebox(0,0)[lb]{\smash{$=$}}}%
    \put(0.33093541,0.11145992){\color[rgb]{0,0,0}\makebox(0,0)[lb]{\smash{$-A^2$}}}%
    \put(0.52263568,0.16770066){\color[rgb]{0,0,0}\makebox(0,0)[lb]{\smash{$n+1$}}}%
    \put(0.69873438,0.12075387){\color[rgb]{0,0,0}\makebox(0,0)[lb]{\smash{$-A^4$}}}%
    \put(0.91667301,0.16572784){\color[rgb]{0,0,0}\makebox(0,0)[lb]{\smash{$n-1$}}}%
  \end{picture}%
\endgroup%

  \end{center}
  Hence, adding one trivial curve with one arrow in the direct orientation,
  to the diagram $\gamma_n$, yields the linear combination
  $-A^2\gamma_{n+1}-A^4\gamma_{n-1}$.
  
  On the other hand, the equality
  \begin{center}
\begingroup%
  \makeatletter%
  \providecommand\color[2][]{%
    \errmessage{(Inkscape) Color is used for the text in Inkscape, but the package 'color.sty' is not loaded}%
    \renewcommand\color[2][]{}%
  }%
  \providecommand\transparent[1]{%
    \errmessage{(Inkscape) Transparency is used (non-zero) for the text in Inkscape, but the package 'transparent.sty' is not loaded}%
    \renewcommand\transparent[1]{}%
  }%
  \providecommand\rotatebox[2]{#2}%
  \newcommand*\fsize{\dimexpr\f@size pt\relax}%
  \newcommand*\lineheight[1]{\fontsize{\fsize}{#1\fsize}\selectfont}%
  \ifx\svgwidth\undefined%
    \setlength{\unitlength}{117.41639328bp}%
    \ifx\svgscale\undefined%
      \relax%
    \else%
      \setlength{\unitlength}{\unitlength * \real{\svgscale}}%
    \fi%
  \else%
    \setlength{\unitlength}{\svgwidth}%
  \fi%
  \global\let\svgwidth\undefined%
  \global\let\svgscale\undefined%
  \makeatother%
  \begin{picture}(1,0.38242357)%
    \lineheight{1}%
    \setlength\tabcolsep{0pt}%
    \put(0,0){\includegraphics[width=\unitlength,page=1]{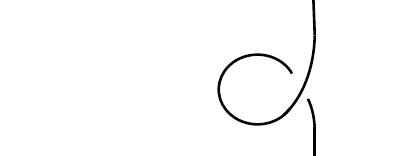}}%
    \put(0.83140429,0.31470731){\color[rgb]{0,0,0}\makebox(0,0)[lt]{\lineheight{0}\smash{\begin{tabular}[t]{l}$n+1$\end{tabular}}}}%
    \put(0.33699995,0.12632736){\color[rgb]{0,0,0}\makebox(0,0)[lt]{\lineheight{0}\smash{\begin{tabular}[t]{l}$=$\end{tabular}}}}%
    \put(0,0){\includegraphics[width=\unitlength,page=2]{Equation_nsep2-nouvelle.pdf}}%
    \put(0.10401606,0.32167675){\color[rgb]{0,0,0}\makebox(0,0)[lt]{\lineheight{0}\smash{\begin{tabular}[t]{l}$n$\end{tabular}}}}%
    \put(0,0){\includegraphics[width=\unitlength,page=3]{Equation_nsep2-nouvelle.pdf}}%
  \end{picture}%
\endgroup%

  \end{center}
  gives, this time, that the same diagram obtained by adding one trivial
  curve with one arrow in the direct orientation to $\gamma_n$ equals
  $-A^2\gamma_{n-1}-A^4\gamma_{n+1}$. The equality between these two expressions
  is equivalent to $\lbrace 2 \rbrace (\gamma_{n+1}-\gamma_{n-1})=0.$
  This proves the proposition.
\end{proof}

\subsection{Arrows on the trivial curve}
\label{sec:arrow_trivial}
We now turn to the case of the trivial curve.
To set things up, for $n \in \Z$ let the arrowed curve~$S_n$ be the trivial
curve with~$n$ arrows in the positive direction. If we fix a sausage
decomposition of~$\Sigma,$ then~$S_n$ also corresponds to the curve
$D_{n,0}^0$ defined in Section~\ref{sec:sausage}.
It also corresponds to the curve~$D_{-n,0}^{2g}.$
We introduce a
last operator~$\theta$ on the vector space~$V$ formally spanned by the~$S_n$ by
$\theta(S_n)=S_{-n}.$ This operator will be treated similarly as we treated
the shift operator~$s$ (which we recall is defined by $s(S_n)=S_{n+1}$ ),
and the operators $\Delta_+=As-A^{-1}$ and $\Delta_-=-A^{-1}s+A$ in
Section~\ref{sec:sausage}: they are
only defined as linear operators on~$V,$ but we will use them to write
relations in $\Sk(\Sigma \times S^1),$
as in Section~\ref{sec:sausage}.

In this context, as there are no ``extra'' curves that act as barriers here,
the relations given by Lemma~\ref{lemma:D_ab^k_relations} simply read
\[ \Delta_- D_{a,b}^k=\Delta_+ D_{a,b}^{k+1} \]
for any $a,b \in \Z$ and any $0 \leqslant k \leqslant 2g-3.$
Thanks to these relations, we can show that the arrowed curves~$S_n$
satisfy the following system of relations:
\begin{proposition}\label{prop:S_n^k_system}
  The multicurves $S_n \in \Sk (\Sigma \times S^1)$ satisfy:
  \begin{enumerate}
  \item $\forall n\geqslant 1, \quad
   A^{-n-2}S_n-A^{n+2}S_{-n} \in \mathrm{Span}_{\QQ(A)}\left(S_0,\ldots ,S_{n-1}\right).$
  \item $\Delta_-^{2g}(S_n)=(\Delta_+^{2g} \circ \theta) (S_n).$
  \end{enumerate}
\end{proposition}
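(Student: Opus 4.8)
The plan is to obtain both identities from the (exact) relations $\Delta_-D_{a,b}^k=\Delta_+D_{a,b}^{k+1}$ recalled above, which are instances of the two-holed torus relations of Proposition~\ref{prop:2holed_torus_relation}, together with the two ways of reading $S_n$ off the sausage decomposition: $S_n=D_{n,0}^0$ at the left capped end and $S_n=D_{-n,0}^{2g}$ at the right one, so that $\theta(S_n)=S_{-n}=D_{n,0}^{2g}$. Throughout I would use that, on the formal diagram space $V$, multiplication by $A$ commutes with the shift $s$, hence $\Delta_+=As-A^{-1}$ and $\Delta_-=-A^{-1}s+A$ commute.

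For statement (2) I would simply telescope the chain of relations from one capped end to the other. Pulling one factor $\Delta_+$ out at each step and using commutativity,
\begin{align*}
\Delta_-^{2g}(S_n)=\Delta_-^{2g}\bigl(D_{n,0}^0\bigr)
&=\Delta_+\Delta_-^{2g-1}\bigl(D_{n,0}^1\bigr)=\Delta_+^{2}\Delta_-^{2g-2}\bigl(D_{n,0}^2\bigr)\\
&=\cdots=\Delta_+^{2g}\bigl(D_{n,0}^{2g}\bigr),
\end{align*}
and since $D_{n,0}^{2g}=\theta(S_n)$ the right-hand side is $(\Delta_+^{2g}\circ\theta)(S_n)$, which is exactly (2). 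The delicate point is that this uses the relation $\Delta_-D^k=\Delta_+D^{k+1}$ for \emph{every} index $k$ running between the two caps, including the first and last steps, where the relevant one- or two-holed torus piece abuts a disk; checking that these boundary steps are legitimate, and matching the arrow-count conventions at the two ends so that the resulting operator is precisely $\Delta_+^{2g}\circ\theta$ (and not, say, a $\theta$-conjugate of it), is where the real work lies.

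Statement (1) is of a different nature: it is one relation for each $n\geqslant 1$, of width growing with $n$, so it cannot come from the fixed-width relations of (2); instead it is local. A trivial curve with $n$ arrows lies in a solid torus $D^2\times S^1\subset\Sigma\times S^1$ (the product with $S^1$ of a disk in $\Sigma$ containing the trivial curve), where it is the $(1,n)$-curve. In $\Sk(D^2\times S^1)\cong\QQ(A)[z]$, which is well known, with $z$ the boundary-framed core, the elements $S_n$ and $S_{-n}$ each have $z$-degree exactly $n$, and $S_0,\dots,S_{n-1}$ form a triangular basis of the span of $1,z,\dots,z^{n-1}$. Hence a suitable $\QQ(A)$-combination $\alpha S_n+\beta S_{-n}$ has $z$-degree $<n$ and therefore lies in $\mathrm{Span}_{\QQ(A)}(S_0,\dots,S_{n-1})$; computing the two leading coefficients — the $n$ crossings contribute $A^{\pm n}$ and the framing correction supplies the extra $A^{\pm 2}$ — shows one may take $(\alpha,\beta)=(A^{-n-2},-A^{n+2})$. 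Being an identity in $\Sk(D^2\times S^1)$, this relation maps forward to $\Sk(\Sigma\times S^1)$, yielding (1).

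I expect the genuine obstacle to be the bookkeeping in (2): verifying the end steps of the telescoping chain and reconciling the conventions of Sections~\ref{sec:sausage} and~\ref{sec:arrow_trivial} so that the exponent comes out to $2g$ and the right-hand operator is exactly $\Delta_+^{2g}\circ\theta$. Part (1), by contrast, is essentially a leading-coefficient computation in the skein module of the solid torus.
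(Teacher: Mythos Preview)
Your argument for (2) is exactly the paper's: telescope the relations $\Delta_-D^k=\Delta_+D^{k+1}$ across the sausage, using commutativity of $\Delta_+$ and $\Delta_-$, and read off the two ends as $S_n=D_{n,0}^0$ and $\theta(S_n)=D_{n,0}^{2g}$. The ``delicate bookkeeping'' you flag is not addressed more carefully in the paper than in your sketch.

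For (1) you take a genuinely different route. The paper stays in $\Sk(\Sigma\times S^1)$ and argues by induction on $n$ using the arrow-shift relation: from the local identity $S_1\cdot\gamma_m=-A^2\gamma_{m+1}-A^4\gamma_{m-1}$ applied to $\gamma=S$ one gets the base case, and multiplying the induction hypothesis by $S_1$ (again via the arrow-shift relation) produces the step. Your approach instead recognises (1) as a purely solid-torus statement: in $\Sk(D^2\times S^1)\cong\QQ(A)[z]$ the elements $S_k$ for $k\geqslant 0$ are triangular over $1,z,\dots$, so it suffices to match the top coefficients of $S_n$ and $S_{-n}$. This is correct and arguably more conceptual, since it explains \emph{why} such a relation exists (same $z$-degree) and isolates where it lives (the solid torus, hence independent of the ambient surface). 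What your approach costs is that the heuristic ``$n$ crossings contribute $A^{\pm n}$ and the framing supplies $A^{\pm 2}$'' is too loose as written; to make it rigorous you would compute the leading terms via the recursion $S_{m+1}=-A^{-2}S_1S_m-A^2S_{m-1}$ (and its mirror for $S_{-m}$), which is nothing other than the arrow-shift relation again, giving leading coefficients $(-1)^{n-1}A^{-2n+2}$ and $(-1)^{n-1}A^{-4n-2}$ and hence the ratio $-A^{-2n-4}$ you need. So the two proofs ultimately rest on the same local relation, but packaged differently: the paper's induction versus your degree argument in $\QQ(A)[z]$.
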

\begin{proof}
  The second point of the proposition results from our remarks above. Indeed,
  from Lemma \ref{lemma:D_ab^k_relations}, we get for any $n \in \Z$
  \[ \Delta_+^{2g} D_{n,0}^{2g}=\Delta_- D_{n,0}^0 \]
  which recalling that $D_{n,0}^0=S_n$ and $D_{n,0}^{2g}=S_{-n}$ gives
  exactly point (2) of the proposition.
  
  Thus we just need to prove point (1).
  Recall the arrow-shift relation obtained in the preceding section:
  \begin{center}
  
  \end{center}
  Applying this relation to the diagram 
\begingroup%
  \makeatletter%
  \providecommand\color[2][]{%
    \errmessage{(Inkscape) Color is used for the text in Inkscape, but the package 'color.sty' is not loaded}%
    \renewcommand\color[2][]{}%
  }%
  \providecommand\transparent[1]{%
    \errmessage{(Inkscape) Transparency is used (non-zero) for the text in Inkscape, but the package 'transparent.sty' is not loaded}%
    \renewcommand\transparent[1]{}%
  }%
  \providecommand\rotatebox[2]{#2}%
  \ifx\svgwidth\undefined%
    \setlength{\unitlength}{44.74950867bp}%
    \ifx\svgscale\undefined%
      \relax%
    \else%
      \setlength{\unitlength}{\unitlength * \real{\svgscale}}%
    \fi%
  \else%
    \setlength{\unitlength}{\svgwidth}%
  \fi%
  \global\let\svgwidth\undefined%
  \global\let\svgscale\undefined%
  \makeatother%
  \begin{picture}(1,0.44677338)%
    \put(0,0){\includegraphics[width=\unitlength]{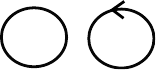}}%
  \end{picture}%
\endgroup%

  will give us that $A^{-2}S_1=A^4S_{-1},$ which is the $n=1$ case of (1).
  We proceed to prove (1) by induction on $n.$
  
  Assume that (1) has been etablished for some $n.$ Using the arrow-shift relation, we have:
  \begin{center}
\begingroup%
  \makeatletter%
  \providecommand\color[2][]{%
    \errmessage{(Inkscape) Color is used for the text in Inkscape, but the package 'color.sty' is not loaded}%
    \renewcommand\color[2][]{}%
  }%
  \providecommand\transparent[1]{%
    \errmessage{(Inkscape) Transparency is used (non-zero) for the text in Inkscape, but the package 'transparent.sty' is not loaded}%
    \renewcommand\transparent[1]{}%
  }%
  \providecommand\rotatebox[2]{#2}%
  \ifx\svgwidth\undefined%
    \setlength{\unitlength}{298.06628418bp}%
    \ifx\svgscale\undefined%
      \relax%
    \else%
      \setlength{\unitlength}{\unitlength * \real{\svgscale}}%
    \fi%
  \else%
    \setlength{\unitlength}{\svgwidth}%
  \fi%
  \global\let\svgwidth\undefined%
  \global\let\svgscale\undefined%
  \makeatother%
  \begin{picture}(1,0.1270414)%
    \put(0,0){\includegraphics[width=\unitlength]{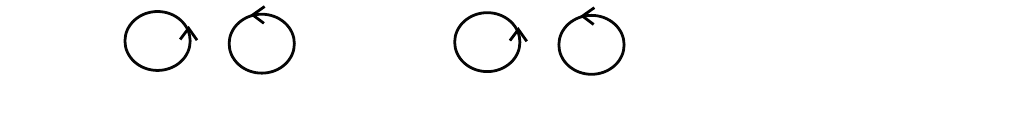}}%
    \put(0.17817488,0.11025612){\color[rgb]{0,0,0}\makebox(0,0)[lb]{\smash{$n$}}}%
    \put(-0.00076535,0.06958789){\color[rgb]{0,0,0}\makebox(0,0)[lb]{\smash{$A^{-n-2}$}}}%
    \put(0.49678389,0.10914306){\color[rgb]{0,0,0}\makebox(0,0)[lb]{\smash{$-n$}}}%
    \put(0.31784366,0.06847483){\color[rgb]{0,0,0}\makebox(0,0)[lb]{\smash{$-A^{n+2}$}}}%
    \put(0.24459968,0.00451871){\color[rgb]{0,0,0}\makebox(0,0)[lb]{\smash{$=-A^{-n}S_{n+1}-A^{-n+2}S_{n-1}+A^{n+4}S_{-n+1}+A^{n+6}S_{-n-1}$}}}%
  \end{picture}%
\endgroup%

  \end{center}
  By induction hypothesis and the arrow-shift relation, the left hand side
  is in $\mathrm{Span}_{\QQ(A)}(S_0,\ldots ,S_{n}).$
  Thus $A^{-n}S_{n+1}-A^{n+6}S_{-n-1} \in \mathrm{Span}_{\QQ(A)}(S_0,\ldots ,S_{n}),$
  and by induction, (1) holds for all $n\geqslant 1.$
\end{proof}
We will use this system to prove the following proposition, that shows that
the subspace of $\Sk(\Sigma \times S^1)$ spanned by arrowed trivial curves
is finite dimensional:
\begin{proposition}\label{prop:arrows_sep}
  The subspace of $\Sk (\Sigma \times S^1)$ spanned by the arrowed curves
  $(S_n)_{n\in \Z,}$ is actually spanned by the curves $S_n$ for $n=0,\ldots,2g.$
\end{proposition}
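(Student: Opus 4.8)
The plan is to leverage the two relations furnished by Proposition~\ref{prop:S_n^k_system}. Set $W=\mathrm{Span}_{\QQ(A)}(S_0,\dots,S_{2g})$; the goal is to show $S_n\in W$ for every $n\in\Z$. The first reduction uses relation~(1): rewriting it, for each $k\geqslant 1$ the curve $S_{-k}$ lies in $\mathrm{Span}_{\QQ(A)}(S_0,\dots,S_k)$, since the coefficient $A^{k+2}$ of $S_{-k}$ is invertible. Consequently it suffices to prove $S_n\in W$ for all $n\geqslant 0$: granting that, $S_{-n}\in\mathrm{Span}(S_0,\dots,S_n)\subseteq W$ automatically. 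To keep the bookkeeping clean I would run a single induction on $N\geqslant 2g$ with hypothesis ``$S_j\in W$ for all $j$ with $|j|\leqslant N$'', the base case $N=2g$ being immediate (non-negative indices by definition, negative ones by relation~(1) as above).

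For the inductive step from $N$ to $N+1$, the key input is relation~(2), $\Delta_-^{2g}(S_m)=(\Delta_+^{2g}\circ\theta)(S_m)$. Expanding with $s(S_j)=S_{j+1}$ and $(As)^j=A^j s^j$ gives
\[
\Delta_-^{2g}(S_m)=\sum_{j=0}^{2g}\binom{2g}{j}(-1)^jA^{2g-2j}S_{m+j},\qquad
\Delta_+^{2g}(S_{-m})=\sum_{j=0}^{2g}\binom{2g}{j}(-1)^jA^{2j-2g}S_{j-m}.
\]
Take $m=N+1-2g\geqslant 1$. On the left the curves that appear have indices $m,m+1,\dots,m+2g=N+1$, and $S_{N+1}$ occurs only at $j=2g$, with the invertible coefficient $A^{-2g}$; on the right the indices lie in $[\,m-2g,\;m\,]$, all of absolute value $\leqslant N$ (here one uses $2g\geqslant 1$ to check $m\leqslant N$ and $4g-N-1\leqslant N$). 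Every curve occurring other than $S_{N+1}$ thus has index of absolute value $\leqslant N$, hence lies in $W$ by the induction hypothesis; solving relation~(2) for $S_{N+1}$ therefore puts $S_{N+1}\in W$. Finally $S_{-(N+1)}\in\mathrm{Span}(S_0,\dots,S_{N+1})\subseteq W$ by relation~(1), which closes the induction and proves the proposition.

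The argument is essentially bookkeeping, so there is no deep obstacle; the one thing that genuinely needs attention is \emph{the order in which the two relations are applied}. Relation~(2) produces negatively-indexed terms $S_{-m},\dots,S_{-1}$, and these can only be absorbed into $W$ once relation~(1) is invoked \emph{inside} the induction --- which is why the inductive hypothesis must control all indices of absolute value $\leqslant N$ rather than merely $0\leqslant j\leqslant N$. The other small point is to confirm that $S_{N+1}$ really is the unique highest-index term of $\Delta_-^{2g}(S_m)$, with a nonzero (monomial, hence invertible) coefficient; this is immediate from the binomial expansion above and uses only $g\geqslant 1$.
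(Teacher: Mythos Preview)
Your argument is correct and follows essentially the same route as the paper's proof: expand relation~(2) of Proposition~\ref{prop:S_n^k_system}, isolate the unique top-index term $S_{N+1}$ with its invertible coefficient $A^{-2g}$, and fold negative indices back into $W$ via relation~(1) inside the induction. One harmless slip: from your own formula $S_{j-m}$ the right-hand indices range over $[-m,\,2g-m]$, not $[m-2g,\,m]$ as you wrote---but since these intervals are reflections of each other, your absolute-value checks ($m\leqslant N$ and $|4g-N-1|\leqslant N$) are precisely the right ones. In fact your organization is a touch cleaner than the paper's: because the right-hand side genuinely carries no index of absolute value $N{+}1$, you solve for $S_{N+1}$ directly, whereas the paper (owing to a sign slip in its expansion, writing $S_{-n-k}$ where $S_{-n+k}$ is meant) sets up a $2\times 2$ linear system in $S_{n+2g}$ and $S_{-n-2g}$ that your version shows to be unnecessary.
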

\begin{proof}
  Let us take a closer look at the equation 
  \[ \Delta_-^{2g}(S_n)=(\Delta_+^{2g} \circ \theta)(S_n). \]
  We expand both expressions, using that $\Delta_-=-A^{-1}s+A$ and
  $\Delta_+=As-A^{-1}.$ We get:
  \[ \underset{k=0}{\overset{2g}{\sum}}\binom{n}{k}(-1)^k A^{2g-2k}S_{n+k}=
  \underset{k=0}{\overset{2g}{\sum}}\binom{n}{k}(-1)^k A^{2k-2g}S_{-n-k}. \]
  Let us now assume $n \geqslant 1.$ Extracting the terms in~$S_l$ with $|l|$ maximal
  from both sides, we get $A^{-2g}S_{n+2g}\equiv A^{2g}S_{-n-2g}$ modulo
  $\mathrm{Span}(S_{-n-2g+1} ,\ldots ,S_{n+2g-1}).$
  Now if $\equiv$ is equality modulo $\mathrm{Span}(S_{-n-2g+1} ,\ldots ,S_{n+2g-1}),$
  using Equation (1) of Proposition~\ref{prop:S_n^k_system}, we get the invertible system
  \[ \begin{cases}A^{-2g}S_{n+2g}-A^{2g}S_{-n-2g} \equiv 0
  \\ A^{n+2g+2}S_{n+2g}-A^{-n-2g-2}S_{-n-2g} \equiv 0
  \end{cases} \]
  which, using again Equation (1) of Proposition~\ref{prop:S_n^k_system},
  implies that , for any $n\geqslant 1,$
  \[ S_{n+2g},S_{-n-2g} \in \mathrm{Span}(S_0,\ldots ,S_{n+2g-1}). \]
  We remark that using only Equation (1), by induction we can show that when
  $|n| \leqslant 2g$ then $S_n \in \mathrm{Span}(S_0,\ldots,S_{2g}).$
  From there we can use the above result to inductively deduce that, for
  any $n \in \Z,$ the element $S_n$ is in $\mathrm{Span}(S_0,\ldots ,S_{2g}).$
\end{proof}

\section{Relating non-separating curves}
\label{sec:rel_nsep_curves}
In this last section, we conclude the proof of Theorem~\ref{thm:basis_skein}.
Thanks to Section~\ref{sec:reducing_degree}, we know that $\Sk(\Sigma \times S^1)$
is spanned by arrowed non-separating curves and arrowed trivial curves and
thanks to Section~\ref{sec:elimin_arrows}, we know that we need only
non-separating curves with~$0$ or~$1$ arrow and trivial curves with at
most~$2g$ arrows to span $\Sk(\Sigma \times S^1).$
Thus to prove Theorem~\ref{thm:basis_skein}, the only missing ingredient is to
prove that two non-separating curves~$\gamma$ and~$\gamma'$
(both with~$0$ or~$1$ arrow) such that $[\gamma]=[\gamma'] \in H_1(\Sigma, \Z/2)$
represent the same element in $\Sk(\Sigma \times S^1),$ which is what we
prove in this section.

\subsection{Action of Dehn twists on the fundamental group}
\label{sec:mapping_class_group}
\begin{figure}[hb]
\centering
\def \svgwidth{0.6\columnwidth}
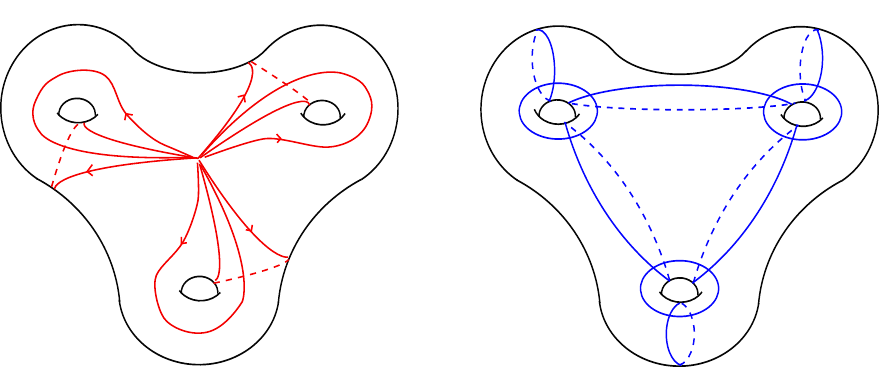
\caption{On the left, the standard generators of $\pi_1(\Sigma)$, on the right,
the Lickorish generators of $\MCG (\Sigma)$, drawn for $\Sigma$ a genus~$3$ surface.}
\label{fig:lickorish}
\end{figure}
In this section, we will set a few notations for the fundamental group and
mapping class group of~$\Sigma,$ and perform some elementary computations,
that we will need in Section \ref{sec:equiv_classes}.

Here, let $\Sigma$ be a closed compact oriented surface of genus $g.$
For elements $a,b \in \pi_1(\Sigma),$ we adopt the convention that $a\cdot b$
is the path obtained by following first the oriented loop $a$ then following
the loop~$b.$
Let $a_1,b_1,\ldots,a_g,b_g$ be the standard generators of $\pi_1(\Sigma),$ as
shown on the left of Figure~\ref{fig:lickorish}, so that $\pi_1(\Sigma)$ is the group
\[ \pi_1(\Sigma)=\langle a_1,b_1,\ldots, a_g,b_g\rangle/_{[a_1,b_1][a_2,b_2]\ldots [a_g,b_g]=1}. \]
We also introduce three families of simple closed curves on $\Sigma:$ the curves
$\alpha_i,\beta_i,\gamma_i$ represented on the right of Figure~\ref{fig:lickorish}.
One can easily check that the curve $\alpha_i$ (resp. $\beta_i$ and $\gamma_i$)
represents the free homotopy class $[a_i]$
(resp. $[b_i]$ and $[a_{i+1}^{-1}b_i a_i b_i^{-1}]$).
The Dehn twists along the curves $\alpha_i,\beta_i,\gamma_i$ form the well-known
Lickorish generators of the mapping class group $\MCG (\Sigma):$
\begin{theorem}\cite{Lickorish}
  The $3g-1$ Dehn twists $\tau_{\alpha_i},\tau_{\beta_i}$ for
  $1\leqslant i \leqslant g$ and $\tau_{\gamma_i}$ for
  $1\leqslant i \leqslant g-1$ generate $\MCG (\Sigma).$
\end{theorem}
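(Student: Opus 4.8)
\emph{Proof proposal.} The plan is to deduce Lickorish's theorem from two statements: first, that $\MCG(\Sigma)$ is generated by the Dehn twists along \emph{all} non-separating simple closed curves; and second, that the subgroup $L\leqslant\MCG(\Sigma)$ generated by the $3g-1$ Lickorish twists $\tau_{\alpha_i},\tau_{\beta_i},\tau_{\gamma_i}$ already contains each of those twists. Granting both, $L=\MCG(\Sigma)$.

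For the first statement I would argue by induction on the genus, working more generally with compact oriented surfaces with finitely many boundary components (this generality is forced by the inductive step). The base cases $g=0,1$ are handled directly, $\MCG(S^2)$ and $\MCG(T^2)\cong\mathrm{SL}_2(\Z)$ being generated by twists. For the inductive step, take $\varphi\in\MCG(\Sigma)$ and fix a non-separating curve $c$. A standard connectivity argument — any two non-separating curves are joined by a finite chain of non-separating curves, consecutive ones being put into minimal position meeting in at most one point, together with the fact that if $i(a,b)=1$ then some element of $\langle\tau_a,\tau_b\rangle$ carries $a$ to $b$ — shows that the subgroup generated by non-separating twists acts transitively on isotopy classes of such curves. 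Hence, after composing $\varphi$ with such a product, we may assume $\varphi(c)=c$. Cutting $\Sigma$ along $c$ gives a surface $\Sigma'$ of genus $g-1$ with two extra boundary circles, to which $\varphi$ descends modulo the Birman exact sequence (point/boundary pushing) and the possible interchange of the two sides of $c$ or reversal of its orientation. The inductive hypothesis writes the induced class as a product of twists along non-separating curves of $\Sigma'$, which remain non-separating in $\Sigma$; the point-pushing and boundary contributions are themselves products of twists (a loop-push is a product of two twists, a twist along a $\partial$-parallel copy of $c$ is a twist along $c$). This closes the induction.

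For the second statement, recall the identity $\tau_{f(c)}=f\,\tau_c\,f^{-1}$ for any $f\in\MCG(\Sigma)$. It therefore suffices to show that $L$ acts transitively on isotopy classes of non-separating simple closed curves: for any such $c$ one then finds $f\in L$ with $f(c)=\alpha_1$, giving $\tau_c=f^{-1}\tau_{\alpha_1}f\in L$. I would establish transitivity in two stages. At the homological level, each twist $\tau_\delta$ acts on $H_1(\Sigma;\Z)$ by the symplectic transvection $x\mapsto x+\langle x,[\delta]\rangle[\delta]$; the classes of the Lickorish curves span $H_1(\Sigma;\Z)$ and their transvections generate $\mathrm{Sp}(2g,\Z)$, which acts transitively on primitive vectors, so $L$ carries the homology class of any non-separating curve to that of $\alpha_1$. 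It then remains to see that $L$ acts transitively on non-separating curves lying in a fixed homology class; this is where the curves $\gamma_i$ enter — chosen precisely to link adjacent handles — and one concludes by a change-of-coordinates/normal-form argument, or equivalently by connectivity of the complex of non-separating curves inside a fixed homology class, each elementary move being realized by a Lickorish twist.

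The main obstacle is the inductive step of the first statement: honestly tracking the Birman exact sequence, the $\partial$-parallel twists in the cut surface, and the cases where $\varphi$ swaps the sides of $c$ or reverses its orientation, all while keeping the statement in the category of surfaces with boundary so that the induction actually closes. A secondary difficulty is the passage from homological transitivity to genuine transitivity on isotopy classes in the second statement. As an alternative to the cutting argument, one could instead apply the Alexander method to the action of $\MCG(\Sigma)$ on a suitable curve or arc complex, reducing a mapping class that fixes a filling system of curves to the identity; the bookkeeping, however, is of comparable weight.
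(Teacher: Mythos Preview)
The paper does not prove this theorem. It is stated with the citation \cite{Lickorish} and used as a black box in the proof of Proposition~\ref{prop:equiv_classes}; there is no proof environment following it. So there is nothing to compare your proposal against.

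That said, your outline follows the classical route to Lickorish's theorem and is broadly sound as a strategy. A few remarks on the gaps you yourself flag. In the inductive step of the first statement, the honest way to close the induction is not to appeal to a Birman exact sequence for boundary components directly, but to work with the mapping class group of a surface with marked points or boundary and use the capping/cutting sequences; the swap of the two sides of $c$ is handled because the relevant mapping class is realized by a product of twists (the ``lantern'' or ``chain'' relations are not needed here, only that a half-twist exchanging two boundary circles of a pair of pants is a product of twists). Your second statement is where the real content lies, and the step you call a ``secondary difficulty'' --- upgrading homological transitivity of $L$ to transitivity on isotopy classes of non-separating curves --- is in fact the heart of Lickorish's original argument. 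It is \emph{not} automatic: two non-separating curves with the same $\Z$-homology class need not be related by an element of $L$ just because $\mathrm{Sp}(2g,\Z)$ is transitive on primitive vectors. The standard fix is to bypass homology entirely and argue directly that $L$ acts transitively on non-separating curves by reducing intersection number with a fixed reference curve via explicit Lickorish twists; this is the computation Lickorish actually performs. Your sketch would need that computation, or an equivalent connectivity statement for a curve complex with an $L$-action, to be complete.
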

For the use of the next section, let us collect here
a few formulas expressing the action of the Dehn twists
$\tau_{\alpha_i},\tau_{\beta_i},\tau_{\gamma_i}$ on $\pi_1(\Sigma).$
\begin{lemma}\label{lemma:dehn_twist_formulas}
  Let $\varepsilon \in \lbrace \pm 1 \rbrace.$
  \begin{itemize}
  \item[-]The map $\tau_{\alpha_i}^{\varepsilon}$ sends $b_i$ to
    $b_i a_i^{\varepsilon},$ and leaves all other generators of
    $\pi_1(\Sigma)$ invariant.
  \item[-]The map $\tau_{\beta_i}^{\varepsilon}$ sends $a_i$ to
    $a_i b_i^{-\varepsilon},$ and leaves all other generators invariant.
  \item[-]The map $\tau_{\gamma_i}^{\varepsilon}$ sends $b_i$ to
    $\gamma_i^{\varepsilon} b_i,$ sends $a_{i+1}$ to
    $\gamma_i^{\varepsilon} a_{i+1} \gamma_i^{-\varepsilon},$ sends $b_{i+1}$ to
    $b_{i+1}\gamma_i^{-\varepsilon},$ and leaves all other (including $a_i$)
    generators of $\pi_1(\Sigma)$ invariant.
  \end{itemize}
\end{lemma}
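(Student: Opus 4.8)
The plan is to read off these formulas from the standard local picture of a Dehn twist. Fix the basepoint $*$ at which $a_1,b_1,\ldots,a_g,b_g$ are all based, as in Figure~\ref{fig:lickorish}, and note that each of $\alpha_i,\beta_i,\gamma_i$ can be isotoped off $*$, so the corresponding twist has a representative supported in an annular neighbourhood of the curve and fixing $*$. Recall then the elementary fact that if $c$ is an oriented simple closed curve and $x$ is a based loop transverse to $c$, meeting it in points $p_1,\ldots,p_k$ (in cyclic order along $x$) with intersection signs $\eta_1,\ldots,\eta_k\in\{\pm1\}$, then $\tau_c^{\varepsilon}(x)$ is obtained from $x$ by making, at each $p_j$, a detour around a loop freely homotopic to $c^{\varepsilon\eta_j}$ and sliding it back to $*$ along the initial arc of $x$; this is seen inside the supporting annulus, where the twist simply wraps each transverse strand of $x$ once around the core. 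In particular $\tau_c^{\varepsilon}$ fixes the class of any loop disjoint from $c$, and acts by a single such insertion on a loop meeting $c$ once.

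First I would treat $\tau_{\alpha_i}$ and $\tau_{\beta_i}$. By Figure~\ref{fig:lickorish}, the curve $\alpha_i$, which lies in the free homotopy class $[a_i]$, is disjoint from every standard generator except $b_i$, which it meets transversally in a single point, of sign $\langle[a_i],[b_i]\rangle=1$; hence $\tau_{\alpha_i}^{\varepsilon}$ fixes all generators but $b_i$ and inserts one copy of $a_i^{\varepsilon}$ into $b_i$, which after the routine basepoint normalisation reads $b_i\mapsto b_ia_i^{\varepsilon}$. Symmetrically, $\beta_i$ represents $[b_i]$ and meets only $a_i$, once, with sign $\langle[b_i],[a_i]\rangle=-1$, so $\tau_{\beta_i}^{\varepsilon}$ sends $a_i$ to $a_ib_i^{-\varepsilon}$ and fixes the other generators; the sign $-\varepsilon$ is exactly this opposite intersection sign.

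Finally I would treat $\tau_{\gamma_i}$, which is the one case requiring real care. Reading Figure~\ref{fig:lickorish}, $\gamma_i$ is disjoint from $a_i$ and from all $a_j,b_j$ with $j\notin\{i,i+1\}$; it meets $b_i$ transversally once (sign $+1$), meets $b_{i+1}$ transversally once (sign $-1$), and meets $a_{i+1}$ transversally in two points of opposite sign, so that $\langle[\gamma_i],[a_{i+1}]\rangle=0$, consistent with $[\gamma_i]=[a_i]-[a_{i+1}]$. Feeding this intersection data together with the free homotopy class $[\gamma_i]=[a_{i+1}^{-1}b_ia_ib_i^{-1}]$ into the insertion rule, and tracking carefully how the two detours around $\gamma_i$ are slid back to $*$ along $a_{i+1}$, one obtains $b_i\mapsto\gamma_i^{\varepsilon}b_i$, $b_{i+1}\mapsto b_{i+1}\gamma_i^{-\varepsilon}$ and $a_{i+1}\mapsto\gamma_i^{\varepsilon}a_{i+1}\gamma_i^{-\varepsilon}$, all other generators fixed. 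The main obstacle, as always in such a computation, is precisely this bookkeeping of the signs and of how the inserted copies of $\gamma_i$ conjugate $a_{i+1}$; I would pin it down by two independent checks. First, the assignment must be an automorphism of $\pi_1(\Sigma)$, i.e.\ must preserve the surface relator $[a_1,b_1]\cdots[a_g,b_g]$: a short computation using $\gamma_i^{-1}=b_ia_i^{-1}b_i^{-1}a_{i+1}$ shows it already fixes the product $[a_i,b_i][a_{i+1},b_{i+1}]$, hence the whole relator. Second, the induced map on $H_1(\Sigma;\Z)$ must be the transvection along $[\gamma_i]=[a_i]-[a_{i+1}]$, which the stated formulas realise; this also fixes the meaning of $\varepsilon$, and completes the proof.
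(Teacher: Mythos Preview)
Your approach is correct and is exactly what the paper has in mind: the paper's ``proof'' simply states that it consists of homotopying the images of generators by the Dehn twists, and leaves this as an exercise for the reader. You have carried out that exercise in the standard way, via the local insertion rule for a Dehn twist, with the added nicety of the two consistency checks (preservation of the surface relator and the induced transvection on $H_1$) to pin down the signs.
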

The proof of the lemma consists of homotopying the images of generators by the
Dehn twists, and is left as exercise for the reader.

\subsection{An equivalence relation on the set of simple closed curves}
\label{sec:equiv_classes}
For $\gamma,\delta$ two simple closed curves on $\Sigma,$ let $i(\gamma,\delta)$
be the geometric intersection number of $\gamma$ and $\delta.$ Let also
$[\gamma]\in H_1(\Sigma,\Z/2)$ be the $\Z/2$-homology class of~$\gamma.$
Finally, for any simple closed curve $\gamma$ on $\Sigma,$ let $\tau_{\gamma}$
denote the Dehn twist along~$\gamma.$
We find some elementary equalities between different simple closed curves
in $\Sk (\Sigma \times S^1):$
\begin{proposition}\label{prop:dehn_twist_rel}
  Let $\gamma$ and $\delta$ be two simple closed curves on $\Sigma,$ viewed
  as elements of $\Sk (\Sigma \times S^1).$ Then:
  \begin{itemize}
  \item[-]If $i(\gamma,\delta)=1,$ then $\gamma=\tau_{\delta}^2(\gamma).$
  \item[-]If $i(\gamma,\delta)=2,$ then $\gamma=\tau_{\delta}(\gamma).$
  \end{itemize}
  Moreover, those relations stay true after decorating $\gamma$ and
  $\tau_{\delta}^{n}(\gamma)$ with the same number of arrows.
\end{proposition}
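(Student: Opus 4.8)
The plan is to obtain both identities by resolving, with the Kauffman relation $K_1$, the crossings of the superposition of $\gamma$ and $\delta$ on $\Sigma$, and then comparing the two orders of stacking: the commutation relation $\gamma\cdot\delta=\delta\cdot\gamma$ — which holds for arrowed diagrams, being a consequence of the Dabkowski--Mroczkowski moves $R_4,R_5$ — yields relations among the resolutions, and the resolutions that enter turn out to be exactly $\gamma$ and the Dehn-twisted curves $\tau_\delta^{\pm1}(\gamma)$.

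Consider first the case $i(\gamma,\delta)=1$, with unique intersection point $p$. The diagram $\gamma\cdot\delta$ (with $\gamma$ over $\delta$) has its single crossing at $p$, and its two $K_1$-smoothings at $p$ are precisely the two band sums of $\gamma$ and $\delta$ along a small band at $p$; since $i(\gamma,\delta)=1$ these band sums are the simple closed curves $\tau_\delta(\gamma)$ and $\tau_\delta^{-1}(\gamma)$ (the standard description of a Dehn twist along a curve met exactly once; e.g.\ on the once-punctured torus, $\tau_{(0,1)}^{\pm1}(1,0)=(1,\pm1)$). As $\delta\cdot\gamma$ differs from $\gamma\cdot\delta$ only by the crossing change at $p$, we get, up to labelling the band sums,
\[ \gamma\cdot\delta=A\,\tau_\delta(\gamma)+A^{-1}\,\tau_\delta^{-1}(\gamma),\qquad \delta\cdot\gamma=A^{-1}\,\tau_\delta(\gamma)+A\,\tau_\delta^{-1}(\gamma). \]
Using $\gamma\cdot\delta=\delta\cdot\gamma$ and the invertibility of $\lbrace 1\rbrace=A-A^{-1}$, this forces $\tau_\delta(\gamma)=\tau_\delta^{-1}(\gamma)$ in $\Sk(\Sigma\times S^1)$. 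Applying this identity with $\tau_\delta(\gamma)$ in place of $\gamma$ — legitimate since $i(\tau_\delta(\gamma),\delta)=i(\gamma,\delta)=1$ — yields $\tau_\delta^{2}(\gamma)=\gamma$. Finally, if one places $n$ arrows on $\gamma$ (away from $p$, leaving $\delta$ unarrowed), each of the two band sums is traversed along $\gamma$ in the forward direction and therefore inherits those same $n$ arrows, so the same computation gives $(\tau_\delta(\gamma))_n=(\tau_\delta^{-1}(\gamma))_n$ and hence $(\tau_\delta^2(\gamma))_n=\gamma_n$, which is the arrowed form of the first bullet.

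For the case $i(\gamma,\delta)=2$, I would run the analogous argument on an appropriate superposition of $\gamma$ with $\delta$ (possibly carrying arrows), now with two crossings, comparing $\gamma\cdot\delta$ and $\delta\cdot\gamma$. Switching the stacking order exchanges the $A$- and $A^{-1}$-smoothings at each crossing, so in $\gamma\cdot\delta-\delta\cdot\gamma$ the two ``mixed'' resolutions cancel while each resolution smoothed the same way at both crossings survives with coefficient $\pm\lbrace 2\rbrace$; invertibility of $\lbrace 1\rbrace$ or $\lbrace 2\rbrace$ then gives an equality among the surviving diagrams. The work is to identify these diagrams: inside a regular neighborhood of $\gamma\cup\delta$ — a surface of Euler characteristic $-2$, a four-holed sphere or a two-holed torus according as the algebraic intersection number of $\gamma$ and $\delta$ is $0$ or $\pm2$ — one writes down explicitly the multicurves obtained by resolving, removes the null-homotopic components with $K_2$, and normalizes the arrows with $R_4$ (the arrows being placed away from the two crossings, so they simply ride along); the identity that falls out is $\gamma=\tau_\delta(\gamma)$, and tracking $n$ arrows through the resolutions gives $\gamma_n=(\tau_\delta(\gamma))_n$.

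The case $i(\gamma,\delta)=1$ is short and self-contained. I expect the main obstacle to be the case $i(\gamma,\delta)=2$: carrying out the local analysis of the surviving resolutions in each of the two possible neighborhood types, getting the orientations and the arrow normalization right, and checking that after discarding the trivial circles the surviving diagrams are exactly $\gamma$ and $\tau_\delta(\gamma)$ (rather than some other multicurves of the same homology class).
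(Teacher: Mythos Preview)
Your treatment of $i(\gamma,\delta)=1$ is correct and matches the paper's: the paper stacks $\tau_\delta(\gamma)$ on $\delta$ rather than $\gamma$ on $\delta$, so the two single-crossing smoothings come out directly as $\gamma$ and $\tau_\delta^2(\gamma)$, but your shifted version is equivalent.

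The gap is in $i(\gamma,\delta)=2$, and it is precisely the obstacle you anticipate. If you resolve $\gamma\cdot\delta$ itself, the two same-sign smoothings are \emph{not} $\gamma$ and $\tau_\delta(\gamma)$. Every resolution of $\gamma\cdot\delta$ is assembled from the two arcs of $\gamma$ and the two arcs of $\delta$ between the crossings, and each smoothing joins a $\gamma$-arc end to a $\delta$-arc end; no such multicurve can have a component isotopic to~$\gamma$. In the four-holed-sphere picture (algebraic intersection~$0$), with curves parametrized by slope so that $\gamma=\gamma_0$, $\delta=\gamma_\infty$, and $\tau_\delta$ sends slope $0$ to slope $2$, the two extreme resolutions of $\gamma\cdot\delta$ sit one \emph{half}-twist to either side of $\gamma$, not a full twist. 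So the relation you extract from $\gamma\cdot\delta=\delta\cdot\gamma$ is true but is not the identity $\gamma=\tau_\delta(\gamma)$.

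The paper's device is to stack on $\delta$ not $\gamma$ but the $1/2$ fractional Dehn twist $\gamma'=\tau_\delta^{1/2}(\gamma)$. Then $i(\gamma',\delta)=2$ as well, and now the two extreme resolutions of $\gamma'\cdot\delta$ are exactly $\gamma$ (undoing the half-twist) and $\tau_\delta(\gamma)$ (completing it); the two mixed resolutions coincide and cancel, giving $(A^2-A^{-2})\bigl(\gamma-\tau_\delta(\gamma)\bigr)=0$. The algebraic-intersection-$\pm 2$ case (two-holed torus neighborhood) is handled by the same half-twist trick. Arrows on $\gamma$ placed away from the crossings ride along as you said.
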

\begin{proof}
  If $i(\gamma,\delta)=1,$ a neighborhood of $a \cup b$ in $\Sigma$ is a
  one-holed torus. Moreover, $\tau_{\delta}(\gamma)$ and $\delta$ also
  intersects once. Looking at the Kauffman resolution of the equation
  $\tau_{\delta}(\gamma)\cdot \delta=\delta \cdot \tau_{\delta}(\gamma),$ we have:
  \begin{center}
\begingroup%
  \makeatletter%
  \providecommand\color[2][]{%
    \errmessage{(Inkscape) Color is used for the text in Inkscape, but the package 'color.sty' is not loaded}%
    \renewcommand\color[2][]{}%
  }%
  \providecommand\transparent[1]{%
    \errmessage{(Inkscape) Transparency is used (non-zero) for the text in Inkscape, but the package 'transparent.sty' is not loaded}%
    \renewcommand\transparent[1]{}%
  }%
  \providecommand\rotatebox[2]{#2}%
  \ifx\svgwidth\undefined%
    \setlength{\unitlength}{300.91520996bp}%
    \ifx\svgscale\undefined%
      \relax%
    \else%
      \setlength{\unitlength}{\unitlength * \real{\svgscale}}%
    \fi%
  \else%
    \setlength{\unitlength}{\svgwidth}%
  \fi%
  \global\let\svgwidth\undefined%
  \global\let\svgscale\undefined%
  \makeatother%
  \begin{picture}(1,0.42513935)%
    \put(0,0){\includegraphics[width=\unitlength]{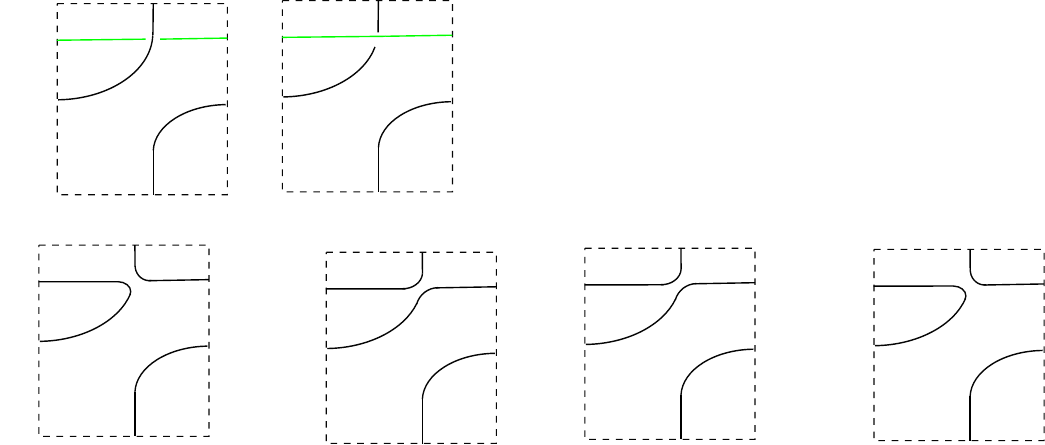}}%
    \put(0.23070331,0.33227611){\color[rgb]{0,0,0}\makebox(0,0)[lb]{\smash{$=$}}}%
    \put(-0.00037064,0.09990345){\color[rgb]{0,0,0}\makebox(0,0)[lb]{\smash{$A$}}}%
    \put(0.21231386,0.09990345){\color[rgb]{0,0,0}\makebox(0,0)[lb]{\smash{$+A^{-1}$}}}%
    \put(0.48602131,0.09156852){\color[rgb]{0,0,0}\makebox(0,0)[lb]{\smash{$= A$}}}%
    \put(0.7335428,0.09548271){\color[rgb]{0,0,0}\makebox(0,0)[lb]{\smash{$+A^{-1}$}}}%
    \put(0.44303857,0.32872917){\color[rgb]{0,0,0}\makebox(0,0)[lb]{\smash{, hence:}}}%
  \end{picture}%
\endgroup%

  \end{center}
  which gives us that $(A-A^{-1})\gamma=(A-A^{-1})\tau_{\delta}^2(\gamma),$ and
  thus $\gamma=\tau_{\delta}^2(\gamma)$ as we work over $\QQ(A)$ coefficients.
  
  If $i(\gamma,\delta)=2,$ if the algebraic intersection of $a$ and $b$ is $0,$
  then a neighborhood of $a \cup b$ in $\Sigma$ is a $4$-holed sphere.
  Otherwise, a neighborhood of $a \cup b$ is a $2$-holed torus. Let us assume
  the former. Let $\gamma'$ be the $1/2$ fractional Dehn twist of $\gamma$ along
  $\delta.$ The equation $\gamma'\cdot \delta=\delta \cdot \gamma'$ reads:
  \begin{center}
\begingroup%
  \makeatletter%
  \providecommand\color[2][]{%
    \errmessage{(Inkscape) Color is used for the text in Inkscape, but the package 'color.sty' is not loaded}%
    \renewcommand\color[2][]{}%
  }%
  \providecommand\transparent[1]{%
    \errmessage{(Inkscape) Transparency is used (non-zero) for the text in Inkscape, but the package 'transparent.sty' is not loaded}%
    \renewcommand\transparent[1]{}%
  }%
  \providecommand\rotatebox[2]{#2}%
  \ifx\svgwidth\undefined%
    \setlength{\unitlength}{389.8668457bp}%
    \ifx\svgscale\undefined%
      \relax%
    \else%
      \setlength{\unitlength}{\unitlength * \real{\svgscale}}%
    \fi%
  \else%
    \setlength{\unitlength}{\svgwidth}%
  \fi%
  \global\let\svgwidth\undefined%
  \global\let\svgscale\undefined%
  \makeatother%
  \begin{picture}(1,0.38919566)%
    \put(0,0){\includegraphics[width=\unitlength]{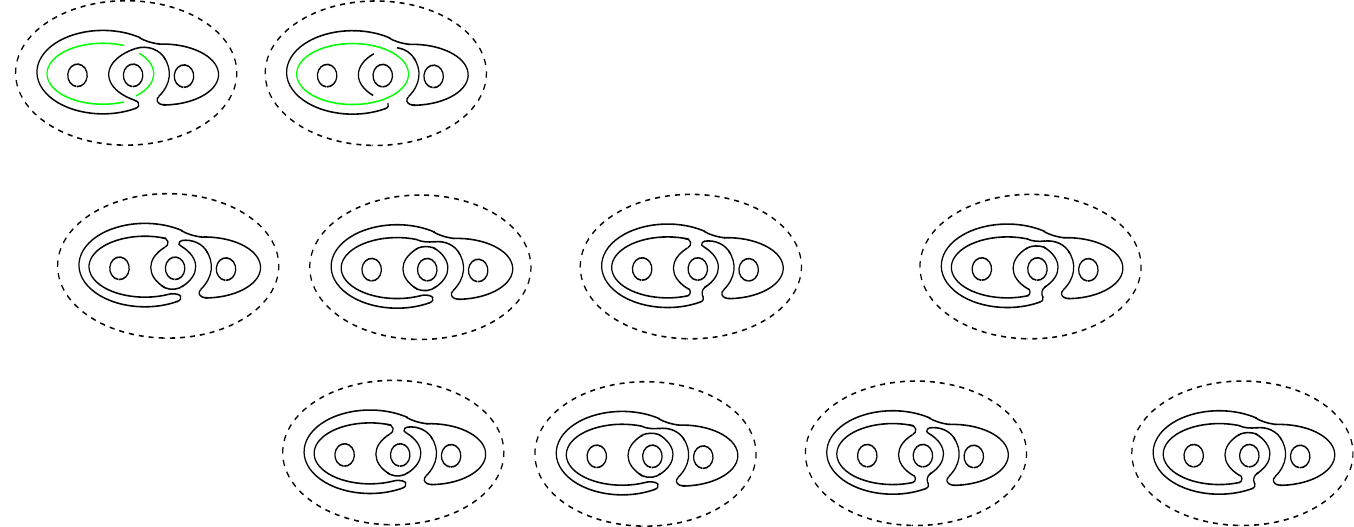}}%
    \put(0.17874858,0.32737586){\color[rgb]{0,0,0}\makebox(0,0)[lb]{\smash{$=$}}}%
    \put(-0.00050935,0.19117837){\color[rgb]{0,0,0}\makebox(0,0)[lb]{\smash{$A^2$}}}%
    \put(0.21057585,0.19195222){\color[rgb]{0,0,0}\makebox(0,0)[lb]{\smash{$+$}}}%
    \put(0.39812685,0.19195222){\color[rgb]{0,0,0}\makebox(0,0)[lb]{\smash{$+$}}}%
    \put(0.60035018,0.19195222){\color[rgb]{0,0,0}\makebox(0,0)[lb]{\smash{$+A^{-2}$}}}%
    \put(0.12176036,0.05327701){\color[rgb]{0,0,0}\makebox(0,0)[lb]{\smash{$=A^{-2}$}}}%
    \put(0.37686933,0.0540509){\color[rgb]{0,0,0}\makebox(0,0)[lb]{\smash{$+$}}}%
    \put(0.56442029,0.0540509){\color[rgb]{0,0,0}\makebox(0,0)[lb]{\smash{$+$}}}%
    \put(0.76664376,0.0540509){\color[rgb]{0,0,0}\makebox(0,0)[lb]{\smash{$+A^2$}}}%
    \put(0.36395096,0.32560953){\color[rgb]{0,0,0}\makebox(0,0)[lb]{\smash{, thus:}}}%
  \end{picture}%
\endgroup%

  \end{center}
  which implies that $(A^2-A^{-2})\gamma=(A^2-A^{-2})\tau_{\delta}(\gamma),$ and
  thus $\gamma=\tau_{\delta}(\gamma).$
  
  The case where the algebraic intersection of $a$ and $b$ is $\pm 2$ is
  fairly similar and left to the reader. 
\end{proof}
This computation leads us to define an equivalence relation on non-separating
simple closed curves on~$\Sigma.$
\begin{definition}\label{def:equiv_relation}
  On the set of non-separating simple closed curves on $\Sigma,$ let $\sim$
  be the equivalence relation generated by:
  \begin{itemize}
  \item[-]If $\gamma, \delta$ are simple closed curves such that
    $i(\gamma,\delta)=1,$ then $\gamma \sim \tau_{\delta}^2(\gamma).$
  \item[-]If $i(\gamma,\delta)=2,$ then $\gamma \sim \tau_{\delta}(\gamma).$
  \end{itemize}
\end{definition}
The proof of Theorem~\ref{thm:basis_skein} now reduces to the following proposition.
\begin{proposition}\label{prop:equiv_classes}
  Let $\gamma$ and $\gamma'$ be two simple closed non-separating curves
  on $\Sigma.$ Then $\gamma \sim \gamma'$ if and only if
  $[\gamma]=[\gamma'] \in H_1(\Sigma ,\Z/2).$
\end{proposition}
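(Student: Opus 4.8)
The plan is to prove the two implications separately; the direction $\gamma\sim\gamma'\Rightarrow[\gamma]=[\gamma']$ is a short homological remark, and the converse carries all the weight. For the forward implication I would use the transvection formula $\tau_\delta(x)=x+\hat\imath(x,\delta)\,\delta$ for the action of a Dehn twist on $H_1(\Sigma;\Z)$, where $\hat\imath$ denotes the algebraic intersection form. It gives $\tau_\delta^{2}(x)=x+2\hat\imath(x,\delta)\,\delta\equiv x\pmod 2$, and when $i(\gamma,\delta)=2$ the algebraic intersection $\hat\imath(\gamma,\delta)\in\{0,\pm 2\}$ is even, so $\tau_\delta(\gamma)\equiv\gamma$ in $H_1(\Sigma;\Z/2)$ as well. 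Hence both generating moves of $\sim$ preserve the $\Z/2$-homology class, which is the forward implication.

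For the converse, the first step is to notice that $\sim$ is equivariant under $\MCG(\Sigma)$: for $\phi\in\MCG(\Sigma)$ one has $\phi\tau_\delta\phi^{-1}=\tau_{\phi(\delta)}$ and $i(\phi(\gamma),\phi(\delta))=i(\gamma,\delta)$, so $\phi$ carries a $\sim$-move to a $\sim$-move. Since $\MCG(\Sigma)$ acts transitively on non-separating simple closed curves (the change-of-coordinates principle) and $\mathrm{Sp}(2g,\Z/2)$, hence $\MCG(\Sigma)$, acts transitively on the nonzero classes of $H_1(\Sigma;\Z/2)$ — each of which is carried by a non-separating curve — it suffices to prove that every non-separating curve whose $\Z/2$-class equals the fixed class $[\alpha_1]=[a_1]$ is $\sim\alpha_1$.

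The core is then to show that the $\sim$-class of $\alpha_1$ is invariant under a large enough subgroup of $\MCG(\Sigma)$. The key lemma I would establish is a ``doubling lemma'': $\tau_\delta^{\,2}(\gamma)\sim\gamma$ for any two non-separating simple closed curves $\gamma,\delta$. This is proved by induction on $i(\gamma,\delta)$: the cases $i(\gamma,\delta)\in\{0,1\}$ are immediate (respectively trivial and a defining move of $\sim$); the case $i(\gamma,\delta)=2$ follows by applying the second defining move first to $\gamma$ and then to $\tau_\delta(\gamma)$, using $i(\tau_\delta(\gamma),\delta)=i(\gamma,\delta)$; and the cases $i(\gamma,\delta)\ge 3$ are brought down to smaller intersection number by surgering $\gamma$ along a subarc of an auxiliary non-separating curve meeting it once or twice. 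Granting the doubling lemma, the subgroup $\Gamma\le\MCG(\Sigma)$ generated by squares of Dehn twists about non-separating curves preserves every $\sim$-class, and one checks that $\Gamma$ acts transitively on the non-separating curves in a fixed $\Z/2$-class (this is where its identification with the level-$2$ subgroup $\MCG(\Sigma)[2]$, or a direct computation with the Lickorish generators, is used): given such $\gamma,\gamma'$, pick any $\psi$ with $\psi(\gamma)=\gamma'$, note that $\psi_*$ then fixes $[\gamma]$ mod $2$, and correct $\psi$ by an element of $\mathrm{Stab}(\gamma)$ lying over the appropriate element of the $\mathrm{Sp}(2g,\Z/2)$-stabilizer of $[\gamma]$ to land in $\MCG(\Sigma)[2]$. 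This yields $\gamma\sim\alpha_1$ for every non-separating $\gamma$ of class $[a_1]$, hence the converse. The addendum about arrows is immediate, since the whole argument only uses the two relations of Proposition~\ref{prop:dehn_twist_rel}, whose last sentence already records that they persist after decorating $\gamma$ and $\tau_\delta^{\pm n}(\gamma)$ with the same number of arrows.

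I expect the main obstacle to be exactly the two technical inputs above: the inductive (bigon/surgery) step of the doubling lemma for $i(\gamma,\delta)\ge 3$, and the transitivity of $\MCG(\Sigma)[2]$ on the non-separating curves of a fixed $\Z/2$-class (equivalently, that $\mathrm{Stab}(\gamma)$ surjects onto the $\mathrm{Sp}(2g,\Z/2)$-stabilizer of $[\gamma]$). If one prefers to remain entirely self-contained and avoid any congruence-subgroup input, the alternative I would pursue is to run the reduction by hand using the Lickorish generators of Figure~\ref{fig:lickorish} together with the explicit $\pi_1$-formulas of Lemma~\ref{lemma:dehn_twist_formulas}, bringing an arbitrary non-separating curve to $\alpha_1$ by an induction on its geometric complexity with respect to a fixed pair-of-pants decomposition.
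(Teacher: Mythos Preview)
Your forward implication and the $\MCG$-equivariance of $\sim$ match the paper. For the converse, your route is genuinely different, and the two steps you flag as ``technical inputs'' are real gaps rather than routine checks.

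The doubling lemma is the crux. The cases $i(\gamma,\delta)\le 2$ are immediate, but for $i(\gamma,\delta)\ge 3$ you would need, for every such pair, a $\sim$-move taking $\gamma$ to some $\gamma'$ with $i(\gamma',\delta)<i(\gamma,\delta)$ (so that equivariance gives $\tau_\delta^2(\gamma)\sim\tau_\delta^2(\gamma')\sim\gamma'\sim\gamma$); the phrase ``surgering $\gamma$ along a subarc of an auxiliary non-separating curve'' does not supply such a move, and it is not clear one always exists. For the transitivity step, your correction argument requires $\mathrm{Stab}_{\MCG}(\gamma)$ to surject onto $\mathrm{Stab}_{\mathrm{Sp}(2g,\Z/2)}([\gamma])$, and even granting that you must still identify your $\Gamma=\langle\tau_\delta^2:\delta\text{ non-separating}\rangle$ with enough of $\MCG(\Sigma)[2]$ to get transitivity; neither point is established. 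The authors in fact record that they sought exactly this kind of structural shortcut and ``were unable to find a statement that would directly apply.''

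What the paper does instead is close to your fallback, but with one organizing idea that makes the computation finite: rather than reducing an arbitrary curve to $\alpha_1$ by a complexity induction, they fix a family $\mathcal{F}=\{a_1^{\varepsilon_1}b_1^{-\delta_1}\cdots a_g^{\varepsilon_g}b_g^{-\delta_g}\}$ containing one representative per nonzero $\Z/2$-class, and verify by an explicit case analysis (using the $\pi_1$-formulas of Lemma~\ref{lemma:dehn_twist_formulas}; eight nontrivial cases arise for the generators $\tau_{\gamma_i}$) that each Lickorish generator and its inverse sends every element of $\mathcal{F}$ to something $\sim$-equivalent to an element of $\mathcal{F}$. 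Since every non-separating curve is the image of $a_1$ under a word in Lickorish generators, induction on word length finishes the proof.
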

  Although we think it is likely that an appropriate use of the mapping class group literature could lead to a short proof of Proposition \ref{prop:equiv_classes}, we were unable to find a statement that would directly apply. Instead, we had to resort to a brute force proof.
\begin{proof}[Proof of Proposition \ref{prop:equiv_classes}]
  Notice that by definition, if $\gamma \sim \gamma'$ then there is a mapping
  class group element $\sigma \in \MCG(\Sigma)$ such that
  $\gamma'= \sigma(\gamma).$ Moreover, if $\gamma$ and $\gamma'$ are related
  by a generating relation as in Definition \ref{def:equiv_relation}, then
  clearly $[\gamma]=[\gamma'].$ The direct implication follows.
  
  Next we remark that if $\gamma \sim \delta,$ then for any
  $\sigma \in \MCG(\Sigma),$ we have that $\sigma(\gamma) \sim \sigma(\delta).$
  Indeed, if for example $\gamma'=\tau_{\delta}^2(\gamma)$ and $i(\delta,\gamma)=1,$ then
  $\sigma(\gamma')=\sigma \circ \tau_{\delta}^2 \circ \sigma^{-1} (\sigma(\gamma))
  =\tau_{\sigma(\delta)}(\sigma(\delta)).$
  Moreover $i(\sigma(\delta)),\sigma(\gamma))=i(\delta,\gamma)=1,$ so
  $\sigma(\gamma')\sim \sigma(\gamma).$ The same is true for the other generating
  relations, and the general case follows by transitivity.
  
  Let us now introduce the finite $\mathcal{F}$ of elements of $\pi_1(\Sigma)$
  of the type:
  \[ a_1^{\varepsilon_1}b_1^{-\delta_1} \ldots a_g^{\varepsilon_g} b_g^{-\delta_g}, \]
  where the $\varepsilon_i,\delta_i$ are elements of $\lbrace 0,1\rbrace,$
  non all-zero. Notice that $\mathcal{F}$ contains exactly one element in each
  non-zero homology class of $H_1(\Sigma,\Z/2).$ Moreover, all of these loops
  actually represent simple closed curves on $\Sigma,$ which are just connected
  sums of simple closed curves $a_i^{\varepsilon_i} b_i^{-\delta_i}.$
  Let also $\mathcal{G}$ denote the set of Lickorish generators:
  $\mathcal{G}=\lbrace \tau_{\alpha_i}, \tau_{\beta_i},\tau_{\gamma_i} \rbrace.$
  By the above discussion, Proposition~\ref{prop:equiv_classes} will follow
  once we prove:
  \begin{lemma}\label{lemma:equiv_classes}
    For any simple closed curve
    $c=a_1^{\varepsilon_1} b_1^{-\delta_1}a_2^{\varepsilon_2}b_2^{-\delta_2}
    \ldots a_g^{\varepsilon_g}b_g^{-\delta_g} \in \mathcal{F},$
    for any Dehn twist
    $\tau \in \mathcal{G}=\lbrace \tau_{\alpha_i},\tau_{\beta_i},\tau_{\gamma_i}\rbrace,$
    we have that $\tau(c)$ and $\tau^{-1}(c)$ are equivalent to elements of $\mathcal{F}.$
  \end{lemma}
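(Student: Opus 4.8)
The plan is to prove Lemma~\ref{lemma:equiv_classes} by explicit computation, treating the three families of Lickorish twists separately; Proposition~\ref{prop:equiv_classes} then follows by the bootstrapping argument already indicated above (write an arbitrary non-separating curve as $\sigma(c_0)$ with $c_0 \in \mathcal{F}$ and $\sigma$ a word in Lickorish generators, and induct on the length of $\sigma$ using $\MCG$-equivariance of $\sim$ together with the Lemma). Two preliminary observations organize the whole argument. First, every curve occurring in our $\sim$-chains will be an essential, non-peripheral curve on a subsurface of $\Sigma$ of genus $\geqslant 1$, and whenever $i(\gamma,\delta)\in\{1,2\}$ one checks directly (using that separating curves vanish in $H_1$) that the relevant curve $\tau_\delta^{k}(\gamma)$ is again non-separating in $\Sigma$, so that $\sim$ is legitimately defined at every step. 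Second, if $\delta$ is contained in a subsurface $W\subset\Sigma$ and every point of $\gamma\cap\delta$ lies in $W$, then the move $\gamma\sim\tau_\delta^{k}(\gamma)$ modifies $\gamma$ only inside $W$; this ``locality'' lets us work one or two handles at a time.

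The basic local tool is a slope sub-lemma on a one-holed torus $N\subset\Sigma$. Isotopy classes of essential simple closed curves on $N$ correspond to primitive vectors of $H_1(N;\Z)\cong\Z^2$ up to sign, and the move $\gamma\sim\tau_\delta^{2}(\gamma)$ with $i(\gamma,\delta)=1$ realizes an arbitrary square of a transvection. Such squares generate the image in $\mathrm{PSL}_2(\Z)$ of the level-$2$ congruence subgroup $\Gamma(2)$, whose three orbits on primitive vectors are exactly the three nonzero classes of $(\Z/2)^2$; a Euclidean-algorithm argument along the Farey graph moreover shows the requisite chain of moves can be chosen with intersection number $1$ at every stage. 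Consequently two essential curves on $N$ with the same $\Z/2$-homology class are $\sim$-equivalent as curves of $\Sigma$, and the same holds for arcs on $N$ with prescribed endpoints on $\partial N$, by closing them up. In practice a single such move will suffice for the $\alpha$- and $\beta$-twists below, so the full statement is needed only for the $\gamma_i$-twists.

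Consider first $\tau_{\alpha_i}^{\pm1}$ and $\tau_{\beta_i}^{\pm1}$. By Lemma~\ref{lemma:dehn_twist_formulas} these modify $c\in\mathcal{F}$ only on the $i$-th handle, replacing the subword $a_i^{\varepsilon_i}b_i^{-\delta_i}$ by $a_i^{\varepsilon_i}(b_ia_i^{\pm1})^{-\delta_i}$, respectively by $(a_ib_i^{\mp1})^{\varepsilon_i}b_i^{-\delta_i}$. If $\delta_i=0$ (respectively $\varepsilon_i=0$) the curve is unchanged; otherwise the new slope on the $i$-th handle lies in the same $\Z/2$-class as one of the three standard slopes $a_i$, $b_i^{-1}$, $a_ib_i^{-1}$ appearing in $\mathcal{F}$, and one verifies case by case that it is obtained from that standard slope by a single move $\tau_{\alpha_i}^{\pm2}$ or $\tau_{\beta_i}^{\pm2}$ with the requisite intersection number $1$, leaving the rest of $c$ untouched. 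Hence $\tau_{\alpha_i}^{\pm1}(c)$ and $\tau_{\beta_i}^{\pm1}(c)$ are $\sim$-equivalent to elements of $\mathcal{F}$.

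The twists $\tau_{\gamma_i}^{\pm1}$ are the main obstacle. Since $\gamma_i=a_{i+1}^{-1}b_ia_ib_i^{-1}$, the twist $\tau_{\gamma_i}^{\pm1}$ affects $c$ only across the $i$-th and $(i+1)$-st handles, and by locality it suffices to work inside a genus-$2$, one-holed subsurface $W\subset\Sigma$ carrying those two handles. Using Lemma~\ref{lemma:dehn_twist_formulas} one writes $\tau_{\gamma_i}^{\pm1}(c)$ explicitly in $\pi_1(\Sigma)$, reads off its $\Z/2$-homology class (hence the element of $\mathcal{F}$ to be matched with it), and produces a sequence of $\sim$-moves inside $W$ carrying it there. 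This is the brute-force core of the proof: one runs through the sixteen values of $(\varepsilon_i,\delta_i,\varepsilon_{i+1},\delta_{i+1})\in\{0,1\}^4$; in the cases with $\delta_i=\varepsilon_{i+1}=\delta_{i+1}=0$, or where $c$ remains supported on a single handle, the problem collapses to the previous paragraph, but when $c$ runs essentially through both handles one needs genuine two-handle moves, controlled with the slope sub-lemma and its arc version applied to the one-holed tori inside $W$. I expect the conceptual content to be modest, the work lying almost entirely in the bookkeeping: listing the intermediate curves, checking that each meets its twisting curve exactly once or twice, and confirming that the final word lies in $\mathcal{F}$.
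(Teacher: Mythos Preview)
Your overall architecture matches the paper's: treat $\tau_{\alpha_i},\tau_{\beta_i},\tau_{\gamma_i}$ separately, observe that the $\alpha$- and $\beta$-twists only disturb one handle and reduce to a single square-twist move there, and then confront the $\gamma_i$-twists by a sixteen-case analysis on a genus-$2$ subsurface. Your slope sub-lemma (square transvections generate $\Gamma(2)$, hence act transitively within each nonzero $\Z/2$-class on a one-holed torus) is a clean conceptual repackaging of what the paper does by hand for the $\alpha,\beta$ cases, and it is correct.

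The gap is that you have not actually executed the $\gamma_i$ cases, and your slope sub-lemma does not by itself close them. The paper's explicit treatment shows why: in several of the eight intersection-one subcases (notably $t=a_ib_{i+1}^{-1}$ and $t=a_ia_{i+1}b_{i+1}^{-1}$), the $\sim$-chain requires twisting along auxiliary curves such as $A_i=a_ia_{i+1}^{-1}b_i$ and $C_i=b_i^{-1}a_{i+1}$ that live on one-holed tori which are \emph{not} either of the two standard handles inside the genus-$2$ subsurface. Your sub-lemma tells you what happens once such a torus is chosen, but it does not tell you which torus to choose; finding these auxiliary curves is precisely the ``bookkeeping'' you defer, and it is where the genuine work of the lemma lies. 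The paper also exploits the $i(c,\gamma_i)=2$ move directly to dispose of two subcases at once, a shortcut your outline does not mention. In short, your plan is correct, but the sentence ``I expect the conceptual content to be modest'' is standing in for the proof; to complete it you must produce, for each of the eight intersection-one subcases, an explicit chain of $\sim$-moves terminating in $\mathcal F$, and in at least two of them this forces you to discover non-obvious genus-$2$ curves to twist along.
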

  \begin{proof}[Proof of Lemma \ref{lemma:equiv_classes}]
    Let us treat first the case of the generators $\tau_{\alpha_i}.$ For $c \in \mathcal{F},$
    let us write $c=w a_i^{\varepsilon_i} b_i^{-\delta_i} z$ where $w,z$ are expressed in
    generators of $\pi_1(\Sigma)$ different than $a_i,b_i.$
    If $\delta_i=0,$ then we have $\tau_{\alpha_i}^{\pm 1}(c)=c,$ so the
    $\tau_{\alpha_i}^{\pm 1}(c)$ are equivalent to elements of $\mathcal{F}.$
    So let us assume $\delta_i=1.$
    If $\delta_i=1,$ then $i(c,\alpha_i)=1,$ and thus
    $\tau_{\alpha_i}(c) \sim \tau_{\alpha_i}^{-1}(c).$
    So it is sufficient to prove that one of the two is equivalent to an
    element of $\mathcal{F}.$
    Let $\mu \in \lbrace \pm 1 \rbrace.$ By the formulas in
    Lemma~\ref{lemma:dehn_twist_formulas}, we have that
    $\tau_{\alpha_i}^{\mu}(c)=w a_i^{\varepsilon_i} a_i^{-\mu} b_i^{-1} z.$
    Depending on the value of $\varepsilon_i,$ we see that either
    $\tau_{\alpha_i}(c)$ or $\tau_{\alpha_i}^{-1}(c)$ is an element of $\mathcal{F},$
    so both are equivalent to elements of $\mathcal{F}.$
    
    Working with the generators $\tau_{\beta_i}$ is similar: still writing
    $c=w a_i^{\varepsilon_i} b_i^{-\delta_i} z,$ we have that if $\varepsilon_i=0$
    then $\tau_{\beta_i}^{\pm 1}(c)=c,$ which is an element of $\mathcal{F}$ already.
    Else $i(\beta_i,c)=1,$ so that $\tau_{\beta_i}(c) \sim \tau_{\beta_i}^{-1}(c)$
    and moreover we find that one of the $\tau_{\beta_i}^{\pm 1}(c)$ is an
    element of $\mathcal{F}.$
    We finally turn to the case of the Lickorish generators $\tau_{\gamma_i}.$
    This time, let us write
    $c=w tz =w a_i^{\varepsilon_i} b_i^{-\delta_i} a_{i+1}^{\varepsilon_{i+1}}b_{i+1}^{-\delta_{i+1}}z.$
    There are $16$ possibilities for the middle word $t,$ which we subdivide
    into $3$ categories, depending on the geometric intersection number with $\gamma_i:$
    \begin{itemize}
    \item[-]For $t=1, a_i, b_i^{-1}b_{i+1}^{-1}, a_i b_i^{-1}b_{i+1}^{-1},
      b_i^{-1}a_{i+1}b_{i+1}^{-1},a_i b_i^{-1} a_{i+1} b_{i+1}^{-1}$
      we have that $i([t],\gamma_i)=0.$
    \item[-]For $t=b_i^{-1},a_i b_i^{-1}, b_i^{-1}a_{i+1}, a_i b_i^{-1} a_{i+1},
      b_{i+1}^{-1}, a_i b_{i+1}^{-1}, a_{i+1} b_{i+1}^{-1}, a_i a_{i+1} b_{i+1}^{-1},$
      we have that $i([t],\gamma_i)=1.$
    \item[-]For $t=a_{i+1}$ or $a_i a_{i+1},$ we have that $i([t],\gamma_i)=2.$
    \end{itemize}
    Again, if $i([t],\gamma_i)=0,$ then $\tau_{\gamma_i}^{\pm 1}(c)=c$ and we have
    nothing to prove. Moreover if $i([t],\gamma_i)=2,$ then by definition of the
    relation $\sim,$ we have that $\tau_{\gamma_i}^{\pm 1}(c) \sim c.$
    
    \begin{figure}[hb]
    \centering
    \def \svgwidth{1.0\columnwidth}
    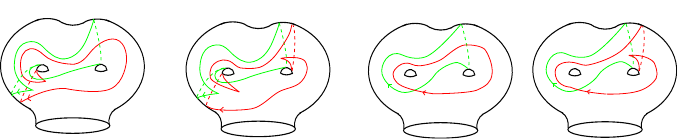
    \caption{The curves $A_i,B_i,B_i',C_i,D_i,D_i'$ in the genus $2$ subsurface
      with boundary the curve corresponding to the free homotopy class of
      $[a_i,b_i][a_{i+1},b_{i+1}]\in \pi_1(\Sigma).$}
    \label{fig:extra_curves}
    \end{figure}
    It remains to show that if $i([t],\gamma_i)=1$ then the elements
    $\tau_{\gamma_i}^{\pm 1}(c)$ are equivalent to elements of $\mathcal{F}.$
    As before, those two elements are equivalent, so we only have to find that
    one of them is equivalent to an element of $\mathcal{F}$ in each case.
    We proceed to show this in the remaining $8$ cases:
    
    \noindent
    1) If $t=b_i^{-1}$ then 
    $\tau_{\gamma_i}(c)=w b_i^{-1} \gamma_i^{-1} z=w a_i^{-1}b_i^{-1}a_{i+1} z.$
    But then we have that $i(\tau_{\gamma_i}(c),\alpha_i)=1,$ so that
    $\tau_{\gamma_i}(c)\sim \tau_{\alpha_i}^{-2}\circ \tau_{\gamma_i}(c).$
    But we compute that:
    \[ \tau_{\alpha_i}^{-2}\circ \tau_{\gamma_i}(c)=w a_i b_i^{-1} a_{i+1} z \]
    which is an element of $\mathcal{F}.$
    
    \noindent
    2) If $t=a_i b_i^{-1}$ then
    $\tau_{\gamma_i}(c)=w a_i b_i^{-1} \gamma_i^{-1} z=w b_i^{-1} a_{i+1} z \in \mathcal{F}.$
    
    \noindent
    3) If $t=b_i^{-1}a_{i+1}$ then 
    $\tau_{\gamma_i}^{-1}(c)=w b_i^{-1} a_{i+1} \gamma_i z=w a_i b_i^{-1} z$
    which is an element of $\mathcal{F}.$
    
    \noindent
    4) If $t=a_i b_i^{-1} a_{i+1}$ then 
    $\tau_{\gamma_i}^{-1}(c)=w a_i b_i^{-1} a_{i+1} \gamma_i z=w a_i^2 b_i^{-1} z.$
    Thus $i(\alpha_i, \tau_{\gamma_i}^{-1}(c))=1$ so that
    $\tau_{\gamma_i}^{-1}(c)\sim \tau_{\alpha_i}^2\circ \tau_{\gamma_i}^{-1}(c).$
    We compute that
    \[ \tau_{\alpha_i}^2\circ \tau_{\gamma_i}^{-1}(c)=w b_i^{-1}z \]
    is an element of $\mathcal{F}.$
    
    \noindent
    5) If $t=b_{i+1}^{-1}$ then
    $\tau_{\gamma_i}^{-1}(c)=
    w \gamma_i^{-1} b_{i+1}^{-1} z=w b_i a_i^{-1} b_i^{-1} a_{i+1} b_{i+1}^{-1}z.$
    We find that $i(\tau_{\gamma_i}^{-1}(c), \beta_i)=1,$ so that
    $\tau_{\gamma_i}^{-1}(c) \sim \tau_{\beta_i}^{-2} \circ \tau_{\gamma_i}(c).$
    We compute that:
    \[ c'=\tau_{\beta_i}^{-2} \circ \tau_{\gamma_i}^{-1}(c)=
    w b_i^{-1} a_i^{-1} b_i^{-1} a_{i+1} b_{i+1}^{-1} z. \]
    But now $i(c',\alpha_i)=2,$ so that $c\sim c' \sim \tau_{\alpha_i}^{-1}(c').$
    We compute that
    \[ \tau_{\alpha_i}^{-1}(c')=w a_i b_i^{-1} a_{i+1} b_{i+1}^{-1} z \]
    is an element of $\mathcal{F}.$
    
    \noindent
    6) If $t=a_{i+1} b_{i+1}^{-1}$ then
    $\tau_{\gamma_i}^{-1}(c)=w \gamma_i^{-1}a_{i+1} b_{i+1}^{-1} z=
    w b_i^{-1} a_i^{-1} b_i b_{i+1}^{-1}z.$
    Similarly to the previous case, we find that 
    \[ c\sim \tau_{\beta_i}^{-2}\circ \tau_{\gamma_i}^{-1}(c)=
    c' \sim \tau_{\alpha_i}^{-1}(c')=w a_i b_i^{-1} b_{i+1}^{-1} z \]
    which is an element of $\mathcal{F}.$
    
    \noindent
    7) If $t=a_i b_{i+1}^{-1}$ then
    $\tau_{\gamma_i}(c)=w a_i \gamma_i b_{i+1}^{-1} z=
    w a_i a_{i+1}^{-1} b_i a_i b_i^{-1} b_{i+1}^{-1}z=c'.$
    Let us introduce $A_i=a_i a_{i+1}^{-1} b_i$ and $B_i=a_i b_i^{-1} b_{i+1}^{-1}.$
    By abuse of notation, we also write $A_i$ and $B_i$ for the simple closed
    curves corresponding to the free homotopy classes $[A_i],[B_i]$.
    Those simple closed curves are represented on Figure~\ref{fig:extra_curves}.
    We see on the Figure that $i(A_i,B_i)=1,$ and also that
    $\tau_{A_i}(B_i)=A_i^2 B_i.$ It is moreover clear that $\tau_{A_i}$
    leaves all the generators $a_j,b_j$ with $j$ not $i$ or $i+1$ invariant.
    Thus we have that 
    \[ \tau_{\gamma_i}(c)=c' \sim \tau_{A_i}^{-2}(c')=
    w A_i^{-1}B_i z=w (b_i^{-1} a_{i+1}) (b_i^{-1} b_{i+1}^{-1}) z=c''. \]
    Now calling $C_i$ and $D_i$ the simple closed curves corresponding to the free
    homotopy classes $[b_i^{-1} a_{i+1}]$ and $[b_i^{-1} b_{i+1}^{-1}].$
    Again we have that $i(C_i,D_i)=1,$ and that $\tau_{C_i}(D_i)=C_i^2 D_i,$
    with $\tau_{C_i}$ leaving the $a_j,b_j$ with $j$ not $i$ or $i+1$ invariant. So,
    \[ \tau_{\gamma_i}(c) \sim c'' \sim \tau_{C_i}^{-2}(c'')=
    w C_i^{-1} D_i z=w a_{i+1}^{-1} b_{i+1}^{-1}z. \]
    Finally, this last element is equivalent to $w a_{i+1} b_{i+1}^{-1}z,$ an
    element of $\mathcal{F},$ using the square of the Dehn twist along $\alpha_{i+1}.$
    
    \noindent
    8) Finally, if $t=a_i a_{i+1} b_{i+1}^{-1}$ then
    $\tau_{\gamma_i}(c)=w a_i  \gamma_i a_{i+1} b_{i+1}^{-1} z=
    w a_i a_{i+1}^{-1} b_i a_i b_i^{-1} a_{i+1} b_{i+1}^{-1}z=c'.$
    Let this time $A_i=a_i a_{i+1}^{-1} b_i$ and $B_i'=a_i b_i^{-1} a_{i+1} b_{i+1}^{-1}.$
    We still have that $i(A_i, B_i')=1,$ and that $\tau_{A_i}(B_i')=A_i^2 B_i',$
    so that similarly to the previous case we get:
    \[ c' \sim w A_i^{-1} B_i'z=w b_i^{-1} a_{i+1} b_i^{-1} a_{i+1} b_{i+1}^{-1}z.\]
    Setting again $C_i=b_i^{-1} a_{i+1}$ and $D_i'=b_i^{-1} a_{i+1} b_{i+1}^{-1},$
    we still check that $i(C_i, D_i')=1$ and thus that
    \[ c' \sim \tau_{C_i}^{-2}(c')=c''= w C_i^{-1} D_i'z= w b_{i+1}^{-1} z \]
    which is an element of $\mathcal{F}.$
  \end{proof}
  Lemma~\ref{lemma:equiv_classes} now being established,
  Proposition~\ref{prop:equiv_classes}
  follows: by induction, for any word in the Lickorish generators $w$ and any
  element $s$ of $\mathcal{F},$ there is $s'\in \mathcal{F}$ so that $w(s)\sim s'.$
  As any non-separating simple closed curve is of the form $w([a_1])$ for some
  $w\in \MCG (\Sigma),$ any non-separating simple closed curve is equivalent
  to an element of $\mathcal{F}.$
\end{proof}

\bibliographystyle{hamsplain}
\bibliography{biblio}
\end{document}